\documentclass[12pt,]{amsart}
\usepackage[latin1]{inputenc}
\usepackage{indentfirst}
\usepackage[dvips]{graphicx}
\usepackage{amsfonts}
\usepackage{amssymb}
\usepackage{theoremref}
\usepackage{bbold}
\usepackage{dsfont}
\usepackage{amsmath}
\usepackage{pb-diagram}
\usepackage{geometry}
\usepackage{pstricks,pst-node}
\usepackage{lmodern}
\usepackage{etoolbox}
\apptocmd{\sloppy}{\hbadness 10000\relax}{}{}
\newcommand {\nl}{\newline}

\hyphenation{pro-per-ties}
\hyphenation{corres-pon-den-ce}

\geometry{paperwidth=210mm,paperheight=297mm,textwidth=150mm,textheight=210mm,top=30mm,bottom=20mm,left=33mm,right=25mm}
\setlength{\parindent}{0pt}


\begin{document}

\newtheorem{theorem}{Theorem}[section]
\newtheorem{lemma}[theorem]{Lemma}
\newtheorem{corollary}[theorem]{Corollary}
\newtheorem{proposition}[theorem]{Proposition}
\newtheorem{remark}[theorem]{Remark}
\newtheorem{definition}[theorem]{Definition}
\newtheorem{example}[theorem]{Example}

\newcommand {\afi}{\textbf{Claim: }}
\newcommand {\prafi}{\textbf{Proof of claim: } }
\newcommand {\D}{\displaystyle}
\newcommand {\eps}{\varepsilon}

\title[Universal C$^*$-algebras and partial actions]{C$^*$-algebras of endomorphisms of groups with finite cokernel and partial actions}

\author{Felipe Vieira}\thanks{Supported by CAPES - Coordena\c{c}\~{a}o de Aperfei\c{c}oamento de Pessoal de N\'{i}vel Superior}

\begin{abstract}
In this paper we extend the constructions of Boava and Exel to present the C$^*$-algebra associated with an injective
endomorphism of a group with finite cokernel as a partial group algebra and consequently as a partial crossed product. With this representation we present another way to study such C$^*$-algebras, only using tools from partial crossed products.
\end{abstract}
\keywords{Group, endomorphism, partial group algebra, partial crossed product.}

\maketitle

\vspace{1cm}

\section{Introduction}

Consider an injective endomorphism $\varphi$ of a discrete countable group $G$ with unit $\{e\}$ with finite cokernel i.e,
\begin{equation}\label{eqintro1}
\left|\dfrac{G}{\varphi(G)} \right|<\infty,
\end{equation}

as above, for $H$ subgroup of $G$, we use $\frac{G}{H}$ to denote the set of left cosets of $H$ in $G$. Analyzing the natural representation of $G$ and $\varphi$ inside $\mathcal{L}(l^2(G))$ we construct a concrete C$^*$-algebra $C_r^*[\varphi]\subseteq \mathcal{L}(l^2(G))$ and a universal one denoted by $\mathds{U}[\varphi]$. Such constructions were presented by Hirshberg in \cite{Hirsh}, and were later generalized by Cuntz and Vershik in \cite{CunVer} and also in \cite{Viei}.

Using a semigroup crossed product description of $\mathds{U}[\varphi]$ implies the existence of a (full corner) group crossed product description of it (\cite{Cuntz2}, \cite{CuntzTopMarkovII} and \cite{Laca1}), but it is not the only way to represent it as a crossed product: analogously to the work of G. Boava and R. Exel in \cite{BoEx} one can show that $\mathds{U}[\varphi]$ has a partial group crossed product description, which can also be related to an inverse semigroup crossed product by \cite{ExVi}.

We present in this paper the latter construction cited above and show the simplicity of $\mathds{U}[\varphi]$, which is part of the conclusions in \cite{Hirsh}, using only the partial group crossed product description of that C$^*$-algebra.

\section{Definition}

We repeat the constructions of \cite{Hirsh}. Let $G$ be a discrete countable group with unit $e$ and $\varphi$ an injective endomorphism (monomorphism) of $G$ with finite cokernel (\ref{eqintro1}).

Consider the Hilbert space $l^2(G)$ with orthonormal basis $\xi_h$, taking every element of $G$ to 0 apart from the element $h$, which goes to 1. Define the following bounded operators on $l^2(G)$:
$$
U_g(\xi_h)=\xi_{gh}
$$

and
$$
S(\xi_g)=\xi_{\varphi(g)}.
$$

The invertibility property of groups and the injectivity of the endomorphism $\varphi$ imply that the $U_g$'s are unitary operators and $S$ is an isometry respectively. Therefore we define the following C$^*$-algebra.
\begin{definition}\label{defi1red}We denote $C_r^*[\varphi]$ the reduced C$^*$-algebra of $\varphi$, to be the C$^*$-subalgebra of $\mathcal{L}(l^2(G))$ generated by the above defined unitaries $\{U_g:\;g\in G\}$ and isometry $S$.
\end{definition}

Inspired by the properties of the operators above:
\begin{definition}\label{defi1}We call $\mathds{U}[\varphi]$ the universal C$^*$-algebra generated by the unitaries $\{u_g:\;g\in G\}$ and one isometry $s$ such that:
\begin{enumerate}
  \item[(i)] $u_gu_h=u_{gh}$;
  \item[(ii)] $su_g=u_{\varphi(g)}s$;
  \item[(iii)] $\D\sum_{g\in G/\varphi(G)}u_gss^*u_{g^{-1}}=1$;
\end{enumerate}
for all $g$, $h\in G$.
\end{definition}

As the universal C$^*$-algebra above is defined using relations satisfied by the generators of the reduced one, obviously there is a canonical surjective $*$-homomorphism from $\mathds{U}[\varphi]$ onto $C_r^*[\varphi]$.

Note that the conditions (i) and (ii) above can be merged into the relation
$$
u_gs^nu_hs^m=u_{g\varphi^n(h)}s^{n+m}.
$$

By (ii) we have, for $g\in G$, the obvious relations
$$
u_{g}s^*=s^*u_{\varphi(g)}
$$

and
$$
u_{\varphi(g)}ss^*=ss^*u_{\varphi(g)}.
$$

Also note that in (iii) there is no ambiguity if we choose different representatives of the cosets:
\begin{equation*}
\begin{split}
u_{g\varphi(h)}ss^*u_{(g\varphi(h))^{-1}}&=u_gu_{\varphi(h)}ss^*u_{\varphi(h^{-1})}u_{g^{-1}}=u_gss^*u_{\varphi(h)}u_{\varphi(h^{-1})}u_{g^{-1}}\\
&=u_gss^*u_{g^{-1}}.
\end{split}
\end{equation*}

Condition (iii) implies that $u_gss^*u_{g^{-1}}$ and $u_hss^*u_{h^{-1}}$ are orthogonal projections if $g^{-1}h\notin\varphi(G)$, so the multiplication can be described as:
\begin{equation*}
u_gss^*u_{g^{-1}}u_hss^*u_{h^{-1}}=\left\{
                                     \begin{array}{cl}
                                       u_gss^*u_{g^{-1}}, & \hbox{if }h\in g\varphi(G); \\
                                       0, & \hbox{otherwise.}
                                     \end{array}
                                   \right.
\end{equation*}

This extends to the family of elements of type $u_gs^n{s^*}^nu_{g^{-1}}$ for any $n\in\mathds{N}$
\begin{equation*}
u_gs^n{s^*}^nu_{g^{-1}}u_hs^n{s^*}^nu_{h^{-1}}=\left\{
                                     \begin{array}{cl}
                                       u_gs^n{s^*}^nu_{g^{-1}}, & \hbox{if }h\in g\varphi^n(G); \\
                                       0, & \hbox{otherwise.}
                                     \end{array}
                                   \right.
\end{equation*}

And note that, for $g$, $h\in G$ and $n\geq m\in\mathds{N}$:
\begin{equation*}
\begin{split}
&u_gs^n{s^*}^nu_{g^{-1}}u_hs^m{s^*}^mu_{h^{-1}}\\
&=u_gs^n{s^*}^nu_{g^{-1}}u_hs^m\left(\D\sum_{k\in\frac{G}{\varphi^{n-m}(G)}}u_ks^{n-m}{s^*}^{n-m}u_{k^{-1}}\right){s^*}^mu_{h^{-1}}\\
&=u_gs^n{s^*}^nu_{g^{-1}}\left(\D\sum_{k\in\frac{G}{\varphi^{n-m}(G)}}u_{h\varphi^m(k)}s^n{s^*}^nu_{(h\varphi^m(k))^{-1}}\right)\\
&=\left\{
    \begin{array}{ll}
      u_gs^n{s^*}^nu_{g^{-1}},&\hbox{ if }h\varphi^m(k)\in g\varphi^n(G)\hbox{ for some }k\in\frac{G}{\varphi^{n-m}(G)}; \\
      0,\hbox{ otherwise}.
    \end{array}
  \right.
\end{split}
\end{equation*}

\section{Crossed product description of $\mathds{U}[\varphi]$}\label{sectioncpdescr}

In this section we present a semigroup crossed product description of $\mathds{U}[\varphi]$. The semigroup crossed product definition which we will use is the same as presented in Appendix A of \cite{Li1}, via covariant representations. In our case the semigroup implementing the action will be the semidirect product
$$
S:=G\rtimes_\varphi\mathds{N}=\{(g,n)\;:\;g\in G, n\in\mathds{N}\}
$$

with product
$$
(g,n)(h,m)=(g\varphi^n(h),n+m).
$$

We will also show that the action implemented by $S$ can be split i.e, the semigroup crossed product by $S$ can be seen as a semigroup crossed product by $\mathds{N}$. This crossed product description is a great tool to prove some properties of $\mathds{U}[\varphi]$: we will show that when $G$ is amenable this C$^*$-algebra is nuclear and satisfies UCT. Secondly, that description allows one to use the six-term exact sequence introduced by M. Khoshkam and G. Skandalis in \cite{Khoska} on $\mathds{U}[\varphi]$.

Moreover, due to M. Laca \cite{Laca1}, sometimes it is possible to see semigroup crossed products as full corners of group ones, which implies that both are Morita equivalent and therefore have the same K-groups. And in case the semigroup action is implemented by $\mathds{N}$, Laca's dilation turns this $\mathds{N}$-action into a $\mathds{Z}$-action, which fits the requirements to use the classical Pimsner-Voiculescu exact sequence \cite{Pivo1}.

Set
$$
\overline{G}:=\D\lim_{\leftarrow}\left\{\frac{G}{\varphi^m(G)}: p_{m,l+m}\right\}
$$

where
$$
p_{m,l+m}:\dfrac{G}{\varphi^{l+m}(G)}\rightarrow\dfrac {G}{\varphi^m(G)}
$$

is the canonical projection. We can see $\overline{G}$ as
$$
\overline{G}=\left\{(g_m)_m\in\D\prod_{m\in\mathds{N}}\frac{G}{\varphi^m(G)}:\;p_{m,l+m}(g_{l+m})=g_m\right\},
$$

with the induced topology on the product $\D\prod_{m\in\mathds{N}}\frac{G}{\varphi^m(G)}$, where each finite set $\dfrac{G}{\varphi^m(G)}$ carries the discrete topology, implying that $\overline{G}$ is a compact space.\\[2\baselineskip]
Furthermore, we have the map
\begin{equation*}
\begin{split}
G&\rightarrow\overline{G}\\
g&\mapsto (g)_m,
\end{split}
\end{equation*}

which is an embedding when $\varphi$ is pure. Also set
$$
\mathcal{G}:=\lim_{\rightarrow}\{\mathcal{G}_m:\phi_{l+m,m}\}
$$

where $\mathcal{G}_m=\overline{G}$ for all $m\in\mathds{N}$ and $\phi_{l+m,m}=\varphi^l$. We can see $\mathcal{G}$ as
$$
\mathcal{G}=\D\bigcup^._{m\in\mathds{N}}\mathcal{G}_m\diagup\thicksim
$$

with $x_l\sim y_m$ if and only if $\varphi^m(x_l)=\varphi^l(y_m)$, $x_l\in\mathcal{G}_l$ and $y_m\in\mathcal{G}_m$. Note that $\mathcal{G}$ is a locally compact set.

Denote by $q$ the canonical projection
$$
q:\D\bigcup_{m\in\mathds{N}}^.\mathcal{G}_m\rightarrow\mathcal{G},
$$

and $i_m$ the embedding
$$
\begin{array}{cccccc}
  i_m: & \overline{G} & = & \mathcal{G}_m & \hookrightarrow & \mathcal{G} \\
       &  x           & = & x             & \mapsto         & q(x).
\end{array}
$$

Again we have the identification
\begin{equation*}
\begin{split}
\overline{G}&\hookrightarrow\mathcal{G}\\
x&\mapsto i_0(x).
\end{split}
\end{equation*}
\begin{remark}\em\label{obs12} Note that if we suppose that our endomorphism $\varphi$ is \emph{totally normal}, i.e. all the $\varphi^m(G)$ are normal subgroups of $G$, then $\overline{G}$ and $\mathcal{G}$ will be groups; one just has to consider the componentwise multiplication in $\overline{G}$ and
$$
i_m(x)i_l(y)=i_{l+m}(xy),\; \forall\; x, y\in\overline{G}
$$

on $\mathcal{G}$.
\end{remark}

\begin{proposition}\label{prop3}The map
\begin{equation*}
\begin{split}
\alpha: C^*(P)&\rightarrow C(\overline{G})\\
u_gs^n{s^*}^nu_{g^{-1}}&\mapsto p_{g\varphi^n(\overline{G})},
\end{split}
\end{equation*}

where the latter denotes the characteristic function on the subset $g\varphi^n(\overline{G})\subseteq\overline{G}$, is an isomorphism.
\end{proposition}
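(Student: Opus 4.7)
My plan is to realize $C^*(P)$ as an inductive limit of finite-dimensional commutative subalgebras, one for each $n\in\mathds{N}$ corresponding to the finite quotient $G/\varphi^n(G)$, and then to invoke Gelfand duality: since $\overline{G}=\lim_{\leftarrow}G/\varphi^n(G)$, the inductive limit of the $C(G/\varphi^n(G))$'s will be $C(\overline{G})$, and one only has to check that the given generators map to the claimed characteristic functions.

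For each $n$ I take $A_n\subseteq C^*(P)$ to be the $*$-subalgebra generated by $\{u_gs^n{s^*}^nu_{g^{-1}}:g\in G/\varphi^n(G)\}$. A straightforward induction on $n$ starting from relation $(iii)$ shows $\sum_{g\in G/\varphi^n(G)}u_gs^n{s^*}^nu_{g^{-1}}=1$, and the multiplication rule recalled just before the statement shows that distinct cosets give orthogonal projections; hence the generators of $A_n$ form a partition of unity by $|G/\varphi^n(G)|$ pairwise orthogonal projections. There is therefore a canonical surjective $*$-homomorphism $\beta_n:C(G/\varphi^n(G))\twoheadrightarrow A_n$ sending the point mass at $g\varphi^n(G)$ to $u_gs^n{s^*}^nu_{g^{-1}}$. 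To upgrade $\beta_n$ to an isomorphism I compose with the canonical surjection $\mathds{U}[\varphi]\to C_r^*[\varphi]$: on the Hilbert space side $U_gS^n{S^*}^nU_{g^{-1}}$ is the orthogonal projection of $l^2(G)$ onto the closed subspace spanned by $\{\xi_h:h\in g\varphi^n(G)\}$, which is nonzero, and distinct cosets give projections onto orthogonal subspaces. The composition is then the standard isomorphism $C(G/\varphi^n(G))\cong\mathds{C}^{|G/\varphi^n(G)|}$, which forces $\beta_n$ itself to be an isomorphism.

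A further application of $(iii)$ inside $s^n{s^*}^n$ yields the refinement
\[
u_gs^n{s^*}^nu_{g^{-1}}=\sum_{k\in G/\varphi(G)}u_{g\varphi^n(k)}s^{n+1}{s^*}^{n+1}u_{(g\varphi^n(k))^{-1}},
\]
so $A_n\subseteq A_{n+1}$, and under the $\beta_n$'s this inclusion becomes the pullback $p_{n,n+1}^{*}:C(G/\varphi^n(G))\hookrightarrow C(G/\varphi^{n+1}(G))$. Since every generator of $C^*(P)$ lies in some $A_n$, we have $C^*(P)=\overline{\bigcup_n A_n}$, and combining this with the standard identification $\lim_{\to}C(X_n)\cong C(\lim_{\leftarrow}X_n)$ for inverse systems of finite discrete spaces produces the asserted isomorphism $C^*(P)\cong C(\overline{G})$. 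Tracing a generator through the construction confirms that its image is the characteristic function of the clopen cylinder $g\varphi^n(\overline{G})\subseteq\overline{G}$, which recovers the map $\alpha$ of the statement.

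The main technical hurdle is ensuring each $\beta_n$ is actually injective: without knowing that the $|G/\varphi^n(G)|$ generators remain linearly independent in $\mathds{U}[\varphi]$, one would only obtain $A_n$ as a quotient of $C(G/\varphi^n(G))$, and the passage to the limit could easily collapse. The detour through the concrete algebra $C_r^*[\varphi]$ is precisely what secures this step; everything else is a formal inductive/Gelfand manipulation.
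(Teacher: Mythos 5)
Your proposal is correct and follows essentially the same route as the paper: both realize $C^*(P)$ as the inductive limit of the finite-dimensional commutative subalgebras generated by the level-$n$ projections, identify each with $C(G/\varphi^n(G))$, and pass to the limit via the duality $\lim_{\rightarrow}C(X_n)\cong C(\lim_{\leftarrow}X_n)$. The one place you go beyond the paper is in explicitly securing injectivity of $\beta_n$ by pushing the projections into $C_r^*[\varphi]$ and observing that $U_gS^n{S^*}^nU_{g^{-1}}$ is the nonzero projection onto $\overline{\mathrm{span}}\{\xi_h:h\in g\varphi^n(G)\}$; the paper's assertion that $\mathrm{spec}(D_m)\cong G/\varphi^m(G)$ tacitly assumes this non-vanishing, so your detour is a legitimate (and welcome) filling-in of a detail rather than a different method.
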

\begin{proof} It is clear that $C^*(P)$ is the inductive limit of
$$
D_m:=C^*\left(\left\{u_gs^m{s^*}^mu_{g^{-1}}:g\in\frac{G}{\varphi^m(G)}\right\}\right)
$$

with the inclusions (using (iii) of Definition \ref{defi1})
\begin{equation*}
\begin{split}
D_m                    &\hookrightarrow D_{l+m}\\
u_gs^m{s^*}^mu_{g^{-1}}&\mapsto\D\sum_{h\in\frac{G}{\varphi^l(G)}}u_{g\varphi^m(h)}s^{l+m}{s^*}^{l+m}u_{\varphi^m(h^{-1})g^{-1}}.
\end{split}
\end{equation*}

Furthermore the pairwise orthogonality of the projections $u_gs^m{s^*}^mu_{g^{-1}}$ for fixed $m\in\mathds{N}$ implies that
$$
spec(D_m)\cong\dfrac{G}{\varphi^m(G)}
$$

with
\begin{equation*}
\begin{split}
spec(D_{l+m})&\rightarrow spec(D_m)\\
\chi         &\mapsto\chi|_{D_m}
\end{split}
\end{equation*}

corresponding to
\begin{equation*}
\begin{split}
p_{l+m,m}:\frac{G}{\varphi^{l+m}(G)}&\rightarrow\frac{G}{\varphi^m(G)}\\
g\varphi^{l+m}(G)                   &\mapsto g\varphi^m(G).
\end{split}
\end{equation*}

Therefore
$$
spec(C^*(P))\cong\D\lim_{\leftarrow}\left\{\frac{G}{\varphi^m(G)}:p_{m,l+m}\right\}=\overline{G}.
$$

Thus we get the isomorphism
\begin{equation*}
\begin{split}
\alpha:C^*(P)          &\rightarrow C(\overline{G})\\
u_gs^m{s^*}^mu_{g^{-1}}&\mapsto p_{g\varphi^m(\overline{G})}.
\end{split}
\end{equation*}
\end{proof}

\begin{definition}\label{defi2}The stabilization of \;$\mathds{U}[\varphi]$, denoted by $\mathds{U}^s[\varphi]$, is the inductive limit of the system $\{\mathds{U}_m^s[\varphi]:\psi_{m,l+m}\}$ where, $\forall\; m\in\mathds{N}$, $\mathds{U}_m^s[\varphi]=\mathds{U}[\varphi]$ and
\begin{equation*}
\begin{split}
\psi_{m,l+m}:\mathds{U}[\varphi]&\rightarrow\mathds{U}[\varphi]\\
															 x&\mapsto s^lx{s^*}^l.
\end{split}
\end{equation*}

Furthermore define $C^*(P)^s=\D\lim_{\rightarrow}\{C^*(P)_m^s:\psi_{m,l+m}\}$ with $C^*(P)_m^s=C^*(P)$ and $\psi_{m,l+m}$ as above.
\end{definition}
\begin{proposition}\label{prop4}We have $C^*(P)^s\cong C_0(\mathcal{G})$.
\end{proposition}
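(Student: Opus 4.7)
The plan is to use Proposition \ref{prop3} to replace every term $C^*(P)_m^s = C^*(P)$ in the inductive system defining $C^*(P)^s$ by $C(\overline{G}) = C(\mathcal{G}_m)$, and to recognize the transported connecting maps as the canonical ``extension by zero'' maps coming from the inductive system of spaces $\{\mathcal{G}_m;\phi_{l+m,m}\}$ whose limit is $\mathcal{G}$. A direct computation using the relations of Definition \ref{defi1} gives
\[
\psi_{m,l+m}(u_g s^n {s^*}^n u_{g^{-1}}) = s^l u_g s^n {s^*}^n u_{g^{-1}} {s^*}^l = u_{\varphi^l(g)} s^{l+n} {s^*}^{l+n} u_{\varphi^l(g)^{-1}},
\]
so under $\alpha$ the connecting map sends $p_{g\varphi^n(\overline{G})}$ to $p_{\varphi^l(g)\varphi^{l+n}(\overline{G})} = p_{\varphi^l(g\varphi^n(\overline{G}))}$. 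On the level of characteristic functions of clopen subsets this is exactly the extension-by-zero map $(\varphi^l)_!:C(\overline{G})\to C(\overline{G})$ associated with the open embedding $\varphi^l:\overline{G}\hookrightarrow\overline{G}$ (the image being clopen because its index is finite).

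For each $m$ I would then define the $*$-homomorphism
\[
\beta_m := (i_m)_! \circ \alpha \;:\; C^*(P)_m^s \longrightarrow C_0(\mathcal{G}),
\]
where $(i_m)_!:C(\overline{G}) = C_0(\mathcal{G}_m)\to C_0(\mathcal{G})$ is extension by zero along the open embedding $i_m$. Using the identity $i_{l+m}\circ\varphi^l = i_m$ together with the functoriality $(i_{l+m})_!\circ(\varphi^l)_! = (i_m)_!$ of extension by zero, one obtains $\beta_{l+m}\circ\psi_{m,l+m} = \beta_m$. The universal property of the inductive limit then assembles the $\beta_m$ into a single $*$-homomorphism $\beta:C^*(P)^s\to C_0(\mathcal{G})$.

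Finally I would show that $\beta$ is an isomorphism. Each $\beta_m$ is injective (composition of the isomorphism $\alpha$ with an extension along an open embedding), and injectivity is preserved by inductive limits of C$^*$-algebras. For surjectivity one has $\beta_m(C^*(P)_m^s) = C_0(U_m)$ with $U_m := i_m(\overline{G})$; since the $\{U_m\}$ form an increasing family of open sets whose union is $\mathcal{G}$, the union $\bigcup_m C_0(U_m)$ is dense in $C_0(\mathcal{G})$, and being the image of a $*$-homomorphism $\beta(C^*(P)^s)$ is already a closed C$^*$-subalgebra, hence equal to $C_0(\mathcal{G})$. I expect the main technical obstacle to be verifying that the $i_m$ (and, behind them, the iterates $\varphi^l$) really produce open embeddings; this ultimately boils down to $\varphi(\overline{G})$ being clopen in $\overline{G}$, which in turn follows from $[G:\varphi(G)]$ being finite.
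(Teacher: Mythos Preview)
Your proposal is correct and follows essentially the same route as the paper: the paper's maps $\widetilde{\psi}_{m,l+m}(f)(x)=f(\varphi^{-l}(x))p_{\varphi^l(\overline{G})}(x)$ and $\kappa_k(f)=f\circ i_k^{-1}\cdot p_{i_k(\overline{G})}$ are exactly your extension-by-zero maps $(\varphi^l)_!$ and $(i_m)_!$, and the paper's explicit verification that $\kappa_{l+m}\circ\widetilde{\psi}_{m,l+m}=\kappa_m$ is your functoriality identity $(i_{l+m})_!\circ(\varphi^l)_!=(i_m)_!$. The only cosmetic difference is that the paper invokes Stone--Weierstrass for surjectivity where you use the more direct observation that $\bigcup_m C_0(i_m(\overline{G}))$ is dense in $C_0(\mathcal{G})$.
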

\begin{proof} The maps $\psi_{m,l+m}$, conjugated by $\alpha$, give maps
$$
\widetilde{\psi}_{m,l+m}:=\alpha\circ\psi_{m,l+m}\circ\alpha^{-1}: C(\overline{G})\rightarrow C(\overline{G}),
$$

where $\widetilde{\psi}_{m,l+m}(f)(x)=f(\varphi^{-l}(x))p_{\varphi^l(\overline{G})}(x):$
\begin{equation*}
\begin{split}
\widetilde{\psi}_{m,l+m}(p_{g\varphi^m(\overline{G})})(x)&=\widetilde{\psi}_{m,l+m}\circ\alpha(u_gs^m{s^*}^mu_{g^{-1}})(x)\\
&=\alpha\circ\psi_{m,l+m}(u_gs^m{s^*}^mu_{g^{-1}})(x)\\
&=\alpha(u_{\varphi^l(g)}s^{l+m}{s^*}^{l+m}u_{\varphi^l(g^{-1})})(x)\\
&=p_{\varphi^l(g)\varphi^{l+m}(\overline{G})}(x)\\
&=p_{g\varphi^m(\overline{G})}(\varphi^{-l}(x))p_{\varphi^l(\overline{G})}(x).
\end{split}
\end{equation*}

By the properties of inductive limits, we have an isomorphism
$$
\overline{\alpha}: C^*(P)^s\rightarrow\D\lim_{\rightarrow}\{C(\overline{G}):\widetilde{\psi}_{m,l+m}\}.
$$

Additionally we consider the $*$-homomorphisms
\begin{equation*}
\begin{split}
\kappa_k:C(\overline{G})&\rightarrow C_0(\mathcal{G})\\
f                       &\mapsto f\circ i_k^{-1}.p_{i_k(\overline{G})}
\end{split}
\end{equation*}
(where the $i$'s are as defined before Remark \ref{obs12}). These $*$-homomorphisms satisfy $\kappa_{l+m}\circ\widetilde{\psi}_{m,l+m}=\kappa_m$, since
\begin{equation*}
\begin{split}
\kappa_{l+m}\circ\widetilde{\psi}_{m,l+m}(f)(x)&=\widetilde{\psi}_{m,l+m}(f)\circ i_{l+m}^{-1}(x)p_{i_{l+m}(\overline{G})}(x)\\
&=f(i_{l+m}^{-1}(\varphi^{-l}(x)))p_{\varphi^l(\overline{G})}(x)p_{i_{l+m}(\overline{G})}(x)\\
&=f(i_{m}^{-1}(x))p_{i_m(\overline{G})}(x)\\
&=\kappa_m(f)(x).
\end{split}
\end{equation*}

Hence we have a $*$-homomorphism
$$
\D\lim_{\rightarrow}\{C(\overline{G}):\widetilde{\psi}_{m,l+m}\}\rightarrow C_0(\mathcal{G}).
$$

This is injective as each $\kappa_k$ is, because of $\kappa_k(f)\circ i_k=f$. It is also surjective as $\mathcal{G}=\overline{\D\cup_{m\in\mathds{N}^*}i_m(\overline{G})}$ and using the Stone-Weierstrass Theorem. So we have
$$
C^*(P)^s\cong C_0(\mathcal{G}).
$$
\end{proof}

Now we have all the tools to describe our C$^*$-algebra as a semigroup crossed product using $S=G\rtimes_\varphi\mathds{N}$. Consider the action
\begin{equation*}
\begin{split}
\alpha: S   &\rightarrow \hbox{End}(C^*(P))\\
       (g,n)&\mapsto u_gs^n (.) {s^*}^nu_{g^{-1}}.
\end{split}
\end{equation*}
\begin{theorem}\label{teo2}$\mathds{U}[\varphi]$ is isomorphic to $C^*(P)\rtimes_{\alpha} S$.
\end{theorem}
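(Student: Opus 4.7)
The plan is to establish the isomorphism by constructing mutually inverse $*$-homomorphisms from the two universal properties. One direction produces a covariant representation of $(C^*(P),S,\alpha)$ inside $\mathds{U}[\varphi]$; the other realises the generators of $\mathds{U}[\varphi]$ inside $C^*(P)\rtimes_\alpha S$.

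For the first direction I would take $\pi:C^*(P)\hookrightarrow\mathds{U}[\varphi]$ to be the canonical inclusion and set $V_{(g,n)}:=u_gs^n$. A direct check shows $V_{(g,n)}^*V_{(g,n)}={s^*}^nu_{g^{-1}}u_gs^n=1$, so each $V_{(g,n)}$ is an isometry; the semigroup-homomorphism identity $V_{(g,n)}V_{(h,m)}=V_{(g\varphi^n(h),n+m)}$ is exactly relation (ii) of Definition \ref{defi1}; and the covariance $V_{(g,n)}\pi(a)V_{(g,n)}^*=\pi(\alpha_{(g,n)}(a))$ is the very definition of $\alpha$. The universal property of the semigroup crossed product from Appendix A of \cite{Li1} then delivers a $*$-homomorphism $\Psi:C^*(P)\rtimes_\alpha S\to\mathds{U}[\varphi]$.

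For the reverse direction, set $\widetilde{u}_g:=V_{(g,0)}$ and $\widetilde{s}:=V_{(e,1)}$ inside $C^*(P)\rtimes_\alpha S$. Since $(g,0)$ is invertible in $S$ with inverse $(g^{-1},0)$, the $\widetilde{u}_g$ are unitaries implementing a group representation, which gives relation (i); relation (ii) is the identity $(e,1)(g,0)=(\varphi(g),1)=(\varphi(g),0)(e,1)$ in $S$. For relation (iii), applying covariance to $1_{C^*(P)}$ yields
$$
\widetilde{u}_g\widetilde{s}\widetilde{s}^*\widetilde{u}_{g^{-1}}=V_{(g,1)}V_{(g,1)}^*=\pi(\alpha_{(g,1)}(1_{C^*(P)}))=\pi(u_gss^*u_{g^{-1}}),
$$
and summing over $g\in G/\varphi(G)$ while using $\sum_g u_gss^*u_{g^{-1}}=1_{C^*(P)}$ gives $\sum_g\widetilde{u}_g\widetilde{s}\widetilde{s}^*\widetilde{u}_{g^{-1}}=1$. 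The universal property of $\mathds{U}[\varphi]$ then produces a homomorphism $\Phi:\mathds{U}[\varphi]\to C^*(P)\rtimes_\alpha S$.

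To verify that $\Psi$ and $\Phi$ are mutual inverses it suffices to check on generators: $\Psi\circ\Phi$ fixes $u_g$ and $s$ by construction, and for $\Phi\circ\Psi$ one tests on spanning monomials $\pi(a)V_{(g,n)}$, noting that $C^*(P)$ is generated by the projections $u_hs^k{s^*}^ku_{h^{-1}}$, all of which sit inside the copy of $\mathds{U}[\varphi]$ on which $\Phi$ is defined. The main technical point I anticipate is the careful matching of conventions for the covariance axioms of \cite{Li1}: one must ensure that the defining relations of $C^*(P)\rtimes_\alpha S$ include (or at least imply) $V_sV_s^*=\pi(\alpha_s(1))$, since this is what translates the identity $\sum_g\alpha_{(g,1)}(1)=1_{C^*(P)}$ into relation (iii) of Definition \ref{defi1} in the crossed product, and conversely.
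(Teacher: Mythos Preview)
Your proposal is correct and follows essentially the same route as the paper: both directions are obtained from the respective universal properties, with $\pi$ the inclusion and $V_{(g,n)}=u_gs^n$ giving the covariant pair, and with $\iota_S(g,0)$, $\iota_S(e,1)$ (your $\widetilde u_g$, $\widetilde s$) verified against Definition~\ref{defi1}. The paper handles your flagged technical point exactly as you anticipate, using $\iota_S(g,n)\iota_S(g,n)^*=\iota_P(u_gs^n{s^*}^nu_{g^{-1}})$ together with $\iota_P(1)=1$ to recover relation~(iii).
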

\begin{proof} By definition, $C^*(P)\rtimes_{\alpha} S$ together with
\begin{equation*}
\begin{split}
\iota_P: C^*(P)&\rightarrow C^*(P)\rtimes_{\alpha} S\\
         x     &\mapsto \iota_P(x)
\end{split}
\end{equation*}

and
\begin{equation*}
\begin{split}
\iota_S: S&\rightarrow \hbox{Isom}(C^*(P)\rtimes_{\alpha} S)\\
  (g,n)   &\mapsto \iota_S(g,n)
\end{split}
\end{equation*}

satisfying
$$
\iota_P(u_gs^nx{s^*}^nu_{g^{-1}})=\iota_S(g,n)\iota_P(x)\iota_S(g,n)^*
$$

is the crossed product of $(C^*(P),S,\alpha)$. But note that the triple $\mathds{U}[\varphi]$,
\begin{equation*}
\begin{split}
\pi: C^*(P)&\rightarrow \mathds{U}[\varphi]\\
         x &\mapsto x
\end{split}
\end{equation*}

and
\begin{equation*}
\begin{split}
\rho: S&\rightarrow \hbox{Isom}(\mathds{U}[\varphi])\\
  (g,n)&\mapsto u_gs^n
\end{split}
\end{equation*}

is a covariant representation of $(C^*(P),S,\alpha)$ because:
$$
\rho(g,n)\pi(x)\rho(g,n)^*=u_gs^nx{s^*}^nu_{g^{-1}}=\pi(\alpha_{(g,n)}(x)).
$$

Therefore there exists a $*$-homomorphism
\begin{equation}\label{teoiso1}
\Phi: C^*(P)\rtimes_{\alpha} S \rightarrow \mathds{U}[\varphi]
\end{equation}

such that $\Phi\circ\iota_P=\pi$ and $\Phi\circ\iota_S=\rho$.

In the other hand it is well known \cite{Laca2} that the crossed product $C^*(P)\rtimes_{\alpha} S$ is generated as a C$^*$-algebra by elements of the form $\iota_S(g,n)$ because we have$$\iota_P(u_gs^n{s^*}^nu_{g^{-1}})=\iota_S(g,n)\iota_S(g,n)^*.$$
But note that $\mathds{U}[\varphi]$ can be viewed as the universal C$^*$-algebra generated by the unitaries $\{u_g:\; g\in G\}$ and the isometry $s$ changing conditions (i) and (ii) in Definition \ref{defi1} to the equivalent one $u_gs^nu_hs^m=u_{g\varphi^n(h)}s^{n+m}$.

Therefore we identify $\iota_S(g,n)$ with $u_gs^n$ because the first ones satisfy the condition above, which generate $\mathds{U}[\varphi]$:
$$
\iota_S(g,n)\iota_S(h,m)=\iota_S(g\varphi^n(h),n+m)
$$

and
\begin{equation*}
\begin{split}
\D\sum_{g\in G/\varphi(G)}\iota_S(g,n)\iota_S(g,n)^*=&\D\sum_{g\in G/\varphi(G)}\iota_P(u_gs^n{s^*}^nu_{g^{-1}})\\
=&\iota_P\left(\D\sum_{g\in G/\varphi(G)}u_gs^n{s^*}^nu_{g^{-1}}\right)\\
=&\iota_P(1)=1.
\end{split}
\end{equation*}

Thus we get another $*$-homomorphism
\begin{equation}\label{teoiso2}
\begin{split}
\Delta: \mathds{U}[\varphi] &\rightarrow C^*(P)\rtimes_{\alpha} S\\
                      u_gs^n&\mapsto \iota_S(g,n).
\end{split}
\end{equation}

As (\ref{teoiso1}) and (\ref{teoiso2}) are inverses of each other we can conclude that
$\mathds{U}[\varphi]$ and $C^*(P)\rtimes_{\alpha} S$ are isomorphic.
\end{proof}

In order to be able to apply the exact sequence presented in \cite{Khoska} we split the action of $S$ presented above: we show that its semigroup crossed product is isomorphic to a semigroup crossed product implemented by $\mathds{N}$, where $\mathds{N}$ acts on a group crossed product by $G$.
\begin{proposition}\label{Prop1GN}The C$^*$-algebra $\mathds{U}[\varphi]$ is also isomorphic to the semigroup crossed product $(C^*(P)\rtimes_{\omega}G)\rtimes_{\tau}\mathds{N}$, where:
\begin{equation*}
\begin{split}
\omega: G &\rightarrow \hbox{Aut}(C^*(P))\\
        g &\mapsto u_g(\cdot)u_{g^{-1}}\\[2\baselineskip]
\tau: \mathds{N} &\rightarrow \hbox{End}(C^*(P)\rtimes_{\omega}G)\\
               n &\mapsto s^n(\cdot){s^*}^n
\end{split}
\end{equation*}

such that for $a_g\delta_g\in C^*(P)\rtimes_{\omega}G$, $\tau_n(a_g\delta_g)=s^na_g{s^*}^n\delta_{\varphi^n(g)}$.
\end{proposition}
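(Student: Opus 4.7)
The plan is to invoke Theorem \ref{teo2} to identify $\mathds{U}[\varphi]$ with $C^*(P)\rtimes_\alpha S$ and then produce mutually inverse $*$-homomorphisms between $C^*(P)\rtimes_\alpha S$ and $(C^*(P)\rtimes_\omega G)\rtimes_\tau\mathds{N}$ by comparing universal properties. Since the $u_g$ are unitary, $\omega$ is manifestly an action by automorphisms. The point requiring verification is that $\tau_n$ is a (non-unital) endomorphism: multiplicativity on products $a\delta_g \cdot b\delta_h$ follows from $u_{\varphi^n(g)} s^n = s^n u_g$ (the iteration of relation (ii) in Definition \ref{defi1}), after which the inner factor $s^{*n} s^n = 1$ cancels, yielding $\tau_n(a\delta_g)\tau_n(b\delta_h) = s^n a\,\omega_g(b)\, s^{*n}\delta_{\varphi^n(gh)} = \tau_n(a\delta_g\cdot b\delta_h)$.

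To construct $\Psi : (C^*(P)\rtimes_\omega G)\rtimes_\tau\mathds{N}\to\mathds{U}[\varphi]$, I would first apply the universal property of the inner crossed product to the covariant pair given by the inclusion $C^*(P)\subset\mathds{U}[\varphi]$ and the unitary representation $g\mapsto u_g$; this yields a $*$-homomorphism $\tilde\pi$ with $\tilde\pi(x\delta_g) = xu_g$. The isometry $s\in\mathds{U}[\varphi]$ then satisfies $s\,\tilde\pi(y)\,s^* = \tilde\pi(\tau_1(y))$ for every $y\in C^*(P)\rtimes_\omega G$, as is checked on generators using the commutation $u_{\varphi(g)}ss^* = ss^*u_{\varphi(g)}$ already noted in the excerpt. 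A second application of the universal property, this time for the outer $\mathds{N}$-crossed product, produces $\Psi$.

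For the inverse $\Delta$, let $j : C^*(P)\rtimes_\omega G \hookrightarrow (C^*(P)\rtimes_\omega G)\rtimes_\tau\mathds{N}$ denote the canonical embedding and let $v$ be the implementing isometry for $\tau_1$. Setting $\pi'(x) = j(x)$ for $x\in C^*(P)$ and $\rho'(g,n) = j(\delta_g)\, v^n$, I claim $(\pi',\rho')$ is a covariant representation of $(C^*(P),S,\alpha)$. The key identity is $v^n j(\delta_h) = j(\delta_{\varphi^n(h)})\, v^n$, which I would derive from the covariance relation $v^n j(\delta_h) v^{*n} = j(\tau_n(\delta_h)) = j(\delta_{\varphi^n(h)})\, j(s^n s^{*n})$ by exploiting the commutation of $s^n s^{*n}$ with $u_{\varphi^n(h)}$ and the isometry identity $j(s^n s^{*n})\, v^n = v^n v^{*n} v^n = v^n$. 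The semigroup property $\rho'(g,n)\rho'(h,m) = \rho'(g\varphi^n(h),n+m)$ and covariance $\rho'(g,n)\pi'(x)\rho'(g,n)^* = \pi'(\alpha_{(g,n)}(x))$ then follow, and universality of $C^*(P)\rtimes_\alpha S$ yields $\Delta$. Comparing $\Delta$ and $\Psi$ on the canonical generators $u_g s^n$ of $\mathds{U}[\varphi]$ shows they are mutually inverse.

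The main obstacle is handling the non-unital nature of $\tau$: because $\tau_n(1) = s^n s^{*n}$ is a proper projection, the range projections $v^n v^{*n}$ do not disappear and must be moved systematically past the generators $j(\delta_g)$. All such moves reduce to the single observation $u_{\varphi^n(g)}\, s^n s^{*n} = s^n s^{*n}\, u_{\varphi^n(g)}$, which follows from (ii) and has already been recorded in the excerpt; once this is kept in mind, the verification becomes routine bookkeeping and every relation ultimately comes from (i)--(iii) of Definition \ref{defi1}.
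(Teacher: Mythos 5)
Your proposal is correct and follows essentially the same route as the paper: both directions are obtained from the universal properties of the two semigroup crossed products, using the covariant pair $x\mapsto j(x\delta_e)$, $(g,n)\mapsto j(\delta_g)v^n$ for $(C^*(P),S,\alpha)$ in one direction and the pair $a\delta_g\mapsto au_g$, $n\mapsto s^n$ for $(C^*(P)\rtimes_\omega G,\mathds{N},\tau)$ in the other, with the verifications resting on $u_{\varphi^n(g)}s^n=s^nu_g$ and the commutation of $s^ns^{*n}$ with $u_{\varphi^n(g)}$ exactly as in the paper. Your explicit derivation of $v^nj(\delta_h)=j(\delta_{\varphi^n(h)})v^n$ and your remark on handling the non-unital range projections are slightly more detailed than the paper's computation, but the argument is the same.
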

\begin{proof} We will show that $C^*(P)\rtimes_{\alpha} S$ and $(C^*(P)\rtimes_{\omega}G)\rtimes_{\tau}\mathds{N}$ are isomorphic, by exploiting the universality of the semigroup crossed products, using two steps analogous to the first part of the proof of theorem above. Consider $C^*(P)\rtimes_{\alpha} S$ together with
\begin{equation*}
\begin{split}
\iota_P: C^*(P)&\rightarrow C^*(P)\rtimes_{\alpha} S\\
         x&\mapsto \iota_P(x)
\end{split}
\end{equation*}

and
\begin{equation*}
\begin{split}
\iota_S: S&\rightarrow \hbox{Isom}(C^*(P)\rtimes_{\alpha} S)\\
  (g,n)&\mapsto \iota_S(g,n)
\end{split}
\end{equation*}

satisfying
$$
\iota_P(u_gs^nx{s^*}^nu_{g^{-1}})=\iota_S(g,n)\iota_P(x)\iota_S(g,n)^*
$$

being the crossed product of $(C^*(P),S,\alpha)$. Analogously take $(C^*(P)\rtimes_{\omega}G)\rtimes_{\tau}\mathds{N}$ with
\begin{equation*}
\begin{split}
\iota_G: C^*(P)\rtimes_{\omega}G&\rightarrow (C^*(P)\rtimes_{\omega}G)\rtimes_{\tau}\mathds{N}\\
         a\delta_g&\mapsto \iota_G(a\delta_g)
\end{split}
\end{equation*}

and
\begin{equation*}
\begin{split}
\iota_N: \mathds{N}&\rightarrow \hbox{Isom}((C^*(P)\rtimes_{\omega}G)\rtimes_{\tau}\mathds{N})\\
  n&\mapsto \iota_N(n)
\end{split}
\end{equation*}

satisfying
$$
\iota_B(s^na\delta_g{s^*}^n\delta_{\varphi^n(g)})=\iota_N(n)\iota_B(a\delta_g)\iota_N(n)^*
$$

as the crossed product of $(C^*(P)\rtimes_{\omega}G,\mathds{N},\tau)$, where $a\delta_g$ represents the generating elements of $C^*(P)\rtimes_{\omega}G$, $g\in G$.

Note that the triple $(C^*(P)\rtimes_{\omega}G)\rtimes_{\tau}\mathds{N}$,
\begin{equation*}
\begin{split}
\varrho: C^*(P)&\rightarrow (C^*(P)\rtimes_{\omega}G)\rtimes_{\tau}\mathds{N}\\
         a&\mapsto \iota_G(a\delta_e)
\end{split}
\end{equation*}

and
\begin{equation*}
\begin{split}
\sigma: S&\rightarrow \hbox{Isom}((C^*(P)\rtimes_{\omega}G)\rtimes_{\tau}\mathds{N})\\
  (g,n)&\mapsto \iota_G(1\delta_g)\iota_N(n)
\end{split}
\end{equation*}

is a covariant representation of $(C^*(P),S,\alpha)$:
\begin{equation*}
\begin{split}
\sigma(g,n)\varrho(a)\sigma(g,n)^*&=\iota_G(1\delta_g)\iota_N(n)\iota_G(a\delta_e)\iota_N(n)^*\iota_G(1\delta_g)^*\\
&=\iota_G(1\delta_g)\iota_G(s^na{s^*}^n\delta_e)\iota_G(1\delta_g)^*\\
&=\iota_G(u_gs^na{s^*}^nu_{g^{-1}}\delta_g)\iota_G(1\delta_{g^{-1}})\\
&=\iota_G(u_gs^na{s^*}^nu_{g^{-1}}\delta_e)\\
&=\varrho(u_gs^na{s^*}^nu_{g^{-1}}).
\end{split}
\end{equation*}

Therefore we get a $*$-homomorphism
\begin{equation}\label{teo12iso1}
\Phi: C^*(P)\rtimes_{\alpha}S\rightarrow (C^*(P)\rtimes_{\omega}G)\rtimes_{\tau}\mathds{N}
\end{equation}

such that $\Phi\circ\iota_P=\varrho$ and $\Phi\circ\iota_S=\sigma$.

Let us find an inverse for $\Phi$ using the fact that the triple $C^*(P)\rtimes_{\alpha} S$,
\begin{equation*}
\begin{split}
\varpi: C^*(P)\rtimes_{\omega}G&\rightarrow C^*(P)\rtimes_{\alpha} S\\
         a\delta_g&\mapsto \iota_P(a)\iota_S(g,0)
\end{split}
\end{equation*}

and
\begin{equation*}
\begin{split}
\vartheta: \mathds{N}&\rightarrow \hbox{Isom}(C^*(P)\rtimes_{\alpha} S)\\
  n&\mapsto \iota_S(e,n)
\end{split}
\end{equation*}

is a covariant representation of $(C^*(P)\rtimes_{\omega}G,\mathds{N},\tau)$:
\begin{equation*}
\begin{split}
\vartheta(n)\varpi(a\delta_g)\vartheta(n)^*&=\iota_S(e,n)\iota_P(a)\iota_S(g,0)\iota_S(e,n)^*\\
&=\iota_S(e,n)\iota_S(g,0)\iota_P(u_{g^{-1}}au_g)\iota_S(e,n)^*\\
&=\iota_S(\varphi^n(g),0)\iota_S(e,n)\iota_P(u_{g^{-1}}au_g)\iota_S(e,n)^*\\
&=\iota_S(\varphi^n(g),0)\iota_P(s^nu_{g^{-1}a{s^*}^n}u_g)\\
&=\iota_S(\varphi^n(g),0)\iota_P(u_{\varphi^n(g^{-1})}s^na{s^*}^nu_{\varphi^n(g)})\\
&=\iota_P(s^na{s^*}^n)\iota_S(\varphi^n(g),0)=\varpi(s^na{s^*}^n\delta_{\varphi^n(g)}).
\end{split}
\end{equation*}

This implies the existence of a $*$-homomorphism
\begin{equation}\label{teo12iso2}
\Delta: (C^*(P)\rtimes_{\omega}G)\rtimes_{\tau}\mathds{N} \rightarrow C^*(P)\rtimes_{\alpha} S
\end{equation}

satisfying $\Delta\circ\iota_G=\varpi$ and $\Delta\circ\iota_N=\vartheta$.

Straightforward calculations show that the $*$-homomorphisms (\ref{teo12iso1}) and (\ref{teo12iso2}) are inverses of each other.
\end{proof}

\begin{example}\em\label{ex2} For any finite group $G$, an injective endomorphism will be surjective and therefore the isometry $s$ defining $\mathds{U}[\varphi]$ will be a unitary (by item (iii) of Definition \ref{defi1}). Then as $C^*(P)=\mathds{C}$,
$$
\mathds{U}[\varphi]\cong C^*(G)\rtimes_\tau\mathds{N}
$$

where
$$
\tau: \mathds{N}\rightarrow \hbox{End}(C^*(G))
$$

with
$$
\tau_n(\lambda u_g)=\lambda u_{\varphi^n(g)}.
$$

If one has the description of the K-theory of $C^*(G)$ it is easy to calculate the K-groups of $\mathds{U}[\varphi]$ by applying the Khoshkam-Skandalis sequence (\cite{Khoska}).
\end{example}
\begin{flushright}

  $\square$

  \end{flushright}

Since more results are known for group crossed products than for semigroup ones it is useful to find such a description of our C$^*$-algebra. We can do this using the minimal automorphic dilation of the semigroup crossed product system above (for more details, see Section 2 in \cite{Laca1}). One important requirement to use this dilation is that the semigroup must be an Ore semigroup: an Ore semigroup is a cancellative semigroup which is right-reversible i.e, it satisfies $Ss\cap Sr\neq \emptyset$ for all $s,r\in S$.
\begin{proposition}\label{propOre} The semidirect product $S=G\rtimes_\varphi\mathds{N}$ is an Ore semigroup.
\end{proposition}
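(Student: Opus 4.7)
My plan is to verify directly the two defining conditions for an Ore semigroup: (a) $S$ is cancellative on both sides, and (b) $Sr \cap Ss \neq \emptyset$ for any $r,s \in S$. I will use repeatedly that $\varphi$ is a monomorphism, hence each iterate $\varphi^n\colon G \to G$ is an injective endomorphism of $G$.

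For cancellativity, I would expand a putative equation $(g,n)(h,m) = (g,n)(h',m')$ via the rule $(g,n)(h,m) = (g\varphi^n(h), n+m)$. Comparing second coordinates yields $m = m'$, and comparing first coordinates gives $g\varphi^n(h) = g\varphi^n(h')$; multiplying on the left by $g^{-1}$ in $G$ and using injectivity of $\varphi^n$ gives $h = h'$. Right cancellation is similar: from $(h,m)(g,n) = (h',m')(g,n)$ one reads $m = m'$ immediately, and then $h\varphi^m(g) = h'\varphi^m(g)$ forces $h = h'$ by right multiplication with $\varphi^m(g)^{-1} \in G$. So cancellation is essentially automatic from the group structure on $G$ plus injectivity of $\varphi$.

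For right-reversibility, given two arbitrary elements $r = (g,n)$ and $s = (h,m)$ in $S$, I need to produce $(a,k), (b,\ell) \in S$ with $(a,k)(g,n) = (b,\ell)(h,m)$. The second coordinates must agree, so pick $k = m$ and $\ell = n$, giving $k+n = n+m = \ell+m$. The first-coordinate equation then becomes $a\,\varphi^m(g) = b\,\varphi^n(h)$, which can be solved trivially by taking $b = e$ and $a = \varphi^n(h)\,\varphi^m(g)^{-1} \in G$. A one-line check using $(g,n)(h,m) = (g\varphi^n(h), n+m)$ confirms that both products equal $(\varphi^n(h), n+m)$.

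There is no real obstacle here: the whole point is that $G$ being a group gives us the inverse $\varphi^m(g)^{-1}$ for free, so we can always absorb any discrepancy into the $G$-coordinate. The finite-cokernel hypothesis on $\varphi$ plays no role in this proposition; only injectivity of $\varphi$ (to conclude cancellativity) is used.
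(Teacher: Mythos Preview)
Your proof is correct and follows essentially the same approach as the paper: both verify left and right cancellativity directly from the multiplication rule and the injectivity of $\varphi$, and both establish right-reversibility by choosing the $\mathds{N}$-coordinates to be $m$ and $n$ and then using inverses in $G$ to solve the $G$-coordinate equation. The only cosmetic difference is that the paper arranges its choices so that the common left multiple is $(e,n+m)$, whereas yours is $(\varphi^n(h),n+m)$.
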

\begin{proof} Consider $(g_i,n_i)\in S$ for $i\in\{1,2,3\}$. $S$ is cancellative:
\begin{equation*}
\begin{split}
&(g_1,n_1)(g_3,n_3)=(g_2,n_2)(g_3,n_3)\\
\Rightarrow\;&(g_1\varphi^{n_1}(g_3),n_1+n_3)=(g_2\varphi^{n_2}(g_3),n_2+n_3)\\
\Rightarrow\; &n_1=n_2 \hbox{ and }g_1\varphi^{n_1}(g_3)=g_2\varphi^{n_1}(g_3)\\
\Rightarrow\; &g_1=g_2
\end{split}
\end{equation*}

\begin{equation*}
\begin{split}
&(g_1,n_1)(g_2,n_2)=(g_1,n_1)(g_3,n_3)\\
\Rightarrow\; &(g_1\varphi^{n_1}(g_2),n_1+n_2)=(g_1\varphi^{n_1}(g_3),n_1+n_3)\\
\Rightarrow\; &n_2=n_3 \hbox{ and }\varphi^{n_1}(g_2)=\varphi^{n_1}(g_3)\\
\Rightarrow\; &g_2=g_3\hbox{ as }\varphi\hbox{ is injective}.
\end{split}
\end{equation*}

Also any two principal left ideals of $S$ intersect:
\begin{equation*}
\begin{split}
(\varphi^{n_2}(g_1^{-1}),n_2)(g_1,n_1)&=(e,n_2+n_1)\\
&=(\varphi^{n_1}(g_2^{-1}),n_1)(g_2,n_2)\in S(g_1,n_1)\cap S(g_2,n_2).
\end{split}
\end{equation*}
\end{proof}

It follows that the semigroup $S$ can be embedded in a group, called the enveloping group of $S$, which we will denote as $env(S)$, such that $S^{-1}S=env(S)$ (Theorem 1.1.2 \cite{Laca1}). It also implies that $S$ is a directed set by the relation defined by $(g,n)< (h,m)$ if $(h,m)\in S(g,n)$.  Let us define a candidate for $env(S)$. Consider
$$
\mathds{G}:=\D\lim_{\rightarrow}\{G_n:\varphi^n\}
$$

(with $G_n=G$ for all $n\in\mathds{N}$) and with the extended endomorphism $\overline{\varphi}$ construct the group
$$
\overline{S}:=\mathds{G}\rtimes_{\overline{\varphi}}\mathds{Z}.
$$

Then we can define an extended action $\overline{\alpha}$ of $\overline{S}$ over $C^*(P)^s$:
\begin{equation*}
\begin{split}
\overline{\alpha}:\overline{S}&\rightarrow \hbox{Aut}(C^*(P)^s)\\
                       (g_j,n)&\mapsto {s^*}^{j}u_gs^{n+j}(\cdot){s^*}^{n+j}u_{g^{-1}}s^{j}
\end{split}
\end{equation*}

(note that we can also find $g_j$ such that $j\geq |\,n|$).

Moreover, consider $i: C^*(P)\rightarrow C^*(P)^s$ the canonical inclusion.
\begin{proposition}\label{proporeG}The C$^*$-dynamical system $(C^*(P)^s,\overline{S},\overline{\alpha})$ is the minimal automorphic dilation of $(C^*(P),S,\alpha)$.
\end{proposition}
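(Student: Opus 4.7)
The plan is to invoke Laca's construction of the minimal automorphic dilation for Ore semigroup dynamical systems (see Section~2 of \cite{Laca1}): since $S$ is Ore by Proposition~\ref{propOre} and the action $\alpha$ is by injective endomorphisms with hereditary range (each $\alpha_{(g,n)}(C^*(P))$ is the corner of $C^*(P)$ at the projection $u_g s^n {s^*}^n u_{g^{-1}}$), the minimal automorphic dilation exists and is unique up to isomorphism. Hence the proof reduces to checking three items: (a) $\overline{S}$ is the enveloping group $S^{-1}S$; (b) $C^*(P)^s$ is isomorphic to the Laca inductive limit of $C^*(P)$ along $\{\alpha_s\}_{s \in S}$; and (c) the dilated action identifies with $\overline{\alpha}$.

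For (a), the map $S \hookrightarrow \overline{S}$ defined by $(g,n) \mapsto (g_0, n)$ is a semigroup embedding, and any $(g_j, n) \in \overline{S}$ factors as $(e_0, j)^{-1}(g_0, n+j)$ whenever $j \geq |n|$, verifying $\overline{S} = S^{-1}S$. For (b), the set $\{(e, m) : m \in \mathds{N}\}$ is cofinal in $S$, since $(\varphi^{m-n}(g^{-1}), m-n)(g, n) = (e, m)$ witnesses $(g, n) \leq (e, m)$ whenever $m \geq n$; the Laca limit is therefore naturally computed along this cofinal subset with connecting maps $\alpha_{(e, l)}(x) = s^l x {s^*}^l = \psi_{0, l}(x)$, which is precisely the defining system of $C^*(P)^s$ in Definition~\ref{defi2}.

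For (c), I would verify in turn that the formula for $\overline{\alpha}$ is well-defined, a group homomorphism into $\text{Aut}(C^*(P)^s)$, and compatible with $\alpha$ via the inclusion $i$. Independence of the chosen level $j \geq |n|$ is the key computation: the identity $g_j = \varphi^k(g)_{j+k}$ in $\mathds{G}$, together with the relations $u_{\varphi(h)}s = s u_h$ and $s^*s = 1$, forces
\[
{s^*}^{j+k} u_{\varphi^k(g)} s^{n+j+k} = {s^*}^{j} u_g s^{n+j},
\]
and symmetrically on the right, so the two candidate expressions for $\overline{\alpha}_{(g_j, n)}$ coincide. Multiplicativity then reduces to a direct manipulation using $(g_j, n)(h_k, m) = (g_j \overline{\varphi}^n(h_k), n+m)$. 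For $(g, n) \in S$ (taking $j = 0$) the formula collapses to $u_g s^n (\cdot) {s^*}^n u_{g^{-1}} = \alpha_{(g, n)}$, giving the intertwining $\overline{\alpha}_s \circ i = i \circ \alpha_s$ for $s \in S$. Minimality is immediate: every class represented by $x \in C^*(P)$ at level $m$ equals $\overline{\alpha}_{(e_0, m)^{-1}}(i(x))$, so $C^*(P)^s = \bigcup_{s \in S} \overline{\alpha}_{s^{-1}}(i(C^*(P)))$.

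The main obstacle is to make rigorous the formal manipulation of ${s^*}^{j}$ and $s^j$ appearing in the definition of $\overline{\alpha}$: while $s$ is only an isometry in $\mathds{U}[\varphi]$, the dilation construction forces it to act as a unitary on $C^*(P)^s$, in the sense that conjugation by ${s^*}^{j}$ must invert the embedding $\psi_{0, j}$. The sandwich formula must therefore be interpreted either inside an ambient algebra where $s$ has been dilated to a unitary (and the result then shown to land back in $C^*(P)^s$) or by explicit computation on representatives at each level of the direct limit; once this interpretation is fixed, all the identities above reduce to manipulations of the covariance relations already recorded in Section~2.
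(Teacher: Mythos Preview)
Your proposal is correct and follows essentially the same route as the paper: both verify that $\overline{S}=S^{-1}S$ via the factorization $(g_j,n)=(\text{element of }S)^{-1}(\text{element of }S)$, note that $\{(e,m):m\in\mathds{N}\}$ is cofinal in $S$, and then invoke Laca's Theorem~2.1.1. The paper stops there, leaving the identification of the dilated action with $\overline{\alpha}$ implicit in the citation, whereas you spell out part~(c) and the interpretation of the formal conjugation by ${s^*}^j$ in more detail than the paper does.
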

\begin{proof} Since the subset of $S$ containing all elements of the type $(e,n)$ is cofinal in $S$, we need only prove that $\overline{S}=env(S)$ (to use Theorem 2.1.1 in \cite{Laca1}). For this we need to show that $S$ is a subsemigroup of $\overline{S}$ and $\overline{S}\subset S^{-1}S$ \cite{CliPre}.

First it is obvious that $S$ is a subsemigroup of the group $\overline{S}$ via the inclusion $(g,n)\mapsto (g_0,n)$, where $g_0=g\in G=G_0\hookrightarrow\overline{G}$.

Without loss of generality take $(g_i,j)\in\overline{S}$ with $i>|j|$. Then
$$
(g_i,j)=(g_i,-i)(e,j+i)=(g_0,i)^{-1}(e,j+i)\in S^{-1}S.
$$
\end{proof}

We may conclude that the following theorem holds (Theorem 2.2.1 of \cite{Laca1}).

\begin{theorem}\label{teo22}The C$^*$-algebra $\mathds{U}[\varphi]$ is also isomorphic to the full corner $$\iota(1)(C^*(P)^s\rtimes_{\overline{\alpha}}\overline{S})\iota(1)\footnote{The isomorphism C$^*(P)\cong C(\overline{G})$ implemented in Proposition \ref{prop3} implies that the projection $\iota(1)\in C^*(P)^s$ corresponds to $p_{\overline{G}}\in C(\overline{G})$ viewed inside $C_0(\mathcal{G})$ via $i_0$ (defined before Remark \ref{obs12}).}.$$
\end{theorem}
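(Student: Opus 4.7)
The plan is to derive the statement as a direct application of Laca's dilation theorem (Theorem 2.2.1 in \cite{Laca1}), which says that for any C$^*$-dynamical system $(A,T,\beta)$ with $T$ an Ore semigroup admitting a minimal automorphic dilation $(\widetilde{A},\operatorname{env}(T),\widetilde{\beta})$, the semigroup crossed product $A\rtimes_\beta T$ is canonically isomorphic to the full corner $p\,(\widetilde{A}\rtimes_{\widetilde{\beta}}\operatorname{env}(T))\,p$, where $p$ is the image of $1_A$ under the canonical embedding $A\hookrightarrow \widetilde{A}$. First I would invoke Theorem \ref{teo2} to rewrite $\mathds{U}[\varphi]$ as $C^*(P)\rtimes_\alpha S$, so that the statement reduces to producing the corner description of this semigroup crossed product.

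Next I would verify that every hypothesis of Laca's theorem is already in place. Proposition \ref{propOre} shows that $S=G\rtimes_\varphi\mathds{N}$ is an Ore semigroup with enveloping group $\overline{S}=\mathds{G}\rtimes_{\overline{\varphi}}\mathds{Z}$. Proposition \ref{proporeG} identifies $(C^*(P)^s,\overline{S},\overline{\alpha})$ as the minimal automorphic dilation of $(C^*(P),S,\alpha)$, with $i:C^*(P)\to C^*(P)^s$ the structural embedding. Applying Laca's theorem therefore gives the isomorphism
\[
C^*(P)\rtimes_\alpha S \;\cong\; \iota(1)\,(C^*(P)^s\rtimes_{\overline{\alpha}}\overline{S})\,\iota(1),
\]
which is the desired identification since $\iota(1)$ is precisely the image of the unit of $C^*(P)$ inside $C^*(P)^s$.

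Fullness of the corner, which is part of the conclusion of Laca's theorem, is ultimately a consequence of the cofinality of $\{(e,n):n\in\mathds{N}\}$ in $S$ already exploited in Proposition \ref{proporeG}: conjugating $\iota(1)$ by elements of the form $\iota_{\overline{S}}(g_0,i)^*\iota_{\overline{S}}(e,j+i)$ exhausts the translates $\overline{\alpha}_{(g_i,j)}(\iota(1))$, so the ideal generated by $\iota(1)$ contains an approximate unit for $C^*(P)^s$ and therefore for the whole crossed product. I do not expect any genuine obstacle; the only point requiring care is the bookkeeping of the corner projection under the chain of identifications of Proposition \ref{prop3} and Proposition \ref{prop4}, namely that $\iota(1)\in C^*(P)^s$ corresponds to $p_{\overline{G}}\in C(\overline{G})$ sitting inside $C_0(\mathcal{G})$ via $i_0$, which is exactly what the footnote records.
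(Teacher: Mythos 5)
Your proposal is correct and follows exactly the route the paper takes: the paper deduces Theorem \ref{teo22} as an immediate application of Theorem 2.2.1 of \cite{Laca1}, with the hypotheses supplied by Theorem \ref{teo2}, Proposition \ref{propOre} and Proposition \ref{proporeG}. Your additional remarks on fullness and on tracking the corner projection through the identifications are consistent with what the paper records in its footnote.
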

\begin{flushright}

  $\square$

  \end{flushright}

Let us denote the isomorphism given by the last theorem by
$$
\beta: \mathds{U}[\varphi]\rightarrow \iota(1)(C^*(P)^s\rtimes_{\overline{\alpha}}\overline{S})\iota(1),
$$

and by Theorem 2.2.1 in \cite{Laca1} we know that
$$
\beta({s^*}^nu_{h^{-1}}fu_{h'}s^m)=i(1)U^*_{(h,n)}i(f)U_{(h',m)}i(1).
$$

Note that the isomorphism above implies that $\mathds{U}[\varphi]$ and $C^*(P)^s\rtimes_{\overline{\alpha}}\overline{S}$ are Morita equivalent and so they have the same K-groups.

To finish our identifications:
\begin{theorem}\label{teo3} The stabilization (Definition \ref{defi2}) $\mathds{U}[\varphi]^s$ is isomorphic to the group crossed product $C^*(P)^s\rtimes_{\overline{\alpha}}\overline{S}$.
\end{theorem}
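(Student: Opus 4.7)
My strategy is to leverage Theorem \ref{teo22}, which identifies $\mathds{U}[\varphi]$ with the full corner $pAp$ of $A := C^*(P)^s\rtimes_{\overline{\alpha}}\overline{S}$, where $p := i(1)$, and to realize each copy $\mathds{U}_m^s[\varphi]$ in the inductive system defining $\mathds{U}[\varphi]^s$ as a progressively larger corner of $A$ whose defining projections form an approximate unit.

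For each $m \in \mathds{N}$, set $q_m := U^*_{(e,m)}\,p\,U_{(e,m)} = \overline{\alpha}_{(e,-m)}(p) \in C^*(P)^s$. Under the isomorphism $C^*(P)^s \cong C_0(\mathcal{G})$ of Proposition \ref{prop4}, the projection $q_m$ corresponds to the characteristic function of $i_m(\overline{G}) \subseteq \mathcal{G}$; since these sets are nested with dense union in $\mathcal{G}$, the sequence $(q_m)$ is an increasing approximate unit of $C^*(P)^s$. A routine verification on generators $f\delta_{(g,n)}$ of $A$ upgrades $(q_m)$ to an approximate unit of $A$, so $\bigcup_m q_m A q_m$ is norm-dense in $A$. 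For each $m$, define the $*$-homomorphism
$$
\gamma_m : \mathds{U}_m^s[\varphi] \longrightarrow A,\qquad \gamma_m(x) := U^*_{(e,m)}\,\beta(x)\,U_{(e,m)},
$$
which, as the composition of $\beta$ with a unitary conjugation, is a $*$-isomorphism of $\mathds{U}[\varphi]$ onto $q_m A q_m$.

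The key step is the compatibility $\gamma_{l+m}\circ\psi_{m,l+m} = \gamma_m$. The footnote of Theorem \ref{teo22} yields $\beta(s^l) = p\,U_{(e,l)}\,p$ and $\beta({s^*}^l) = p\,U^*_{(e,l)}\,p$; plugging these into $\gamma_{l+m}(s^l x\,{s^*}^l)$ and using $U^*_{(e,l+m)}U_{(e,l)} = U_{(e,-m)}$ in $\overline{S}$, together with the absorption relations $p q_l = p$ (since $p = q_0 \leq q_l$) and $\beta(x) = p\beta(x)p$, one obtains $\gamma_{l+m}(s^l x\, {s^*}^l) = q_{l+m}\gamma_m(x) = \gamma_m(x)$, where the last equality uses $\gamma_m(x) \in q_m A q_m \subseteq q_{l+m}Aq_{l+m}$. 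The universal property of inductive limits then provides a $*$-homomorphism $\gamma : \mathds{U}[\varphi]^s \to A$; it is injective as an inductive limit of injective $*$-homomorphisms, and surjective because its image contains the dense subalgebra $\bigcup_m q_m A q_m$. I expect the main obstacle to be the algebraic bookkeeping in the compatibility check, which intertwines the group law of the enveloping semidirect product $\overline{S}$ with the ordering of $(q_m)$ inside $C^*(P)^s$; promoting $(q_m)$ from an approximate unit of $C^*(P)^s$ to one of $A$ is comparatively routine, requiring only control of $\overline{\alpha}_{(g,n)}(q_m)$ in the strict topology as $m \to \infty$.
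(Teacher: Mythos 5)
Your proposal is correct and follows essentially the same route as the paper: your maps $\gamma_m = \lambda_m\circ\beta$ are exactly the paper's composition of $\beta$ with the conjugations $\lambda_m(z)=V^*(e,m)zV(e,m)$, your projections $q_m=\overline{\alpha}_{(e,-m)}(\iota(1))$ are the paper's $\lambda_m(\iota(1))$, and both arguments conclude by injectivity of each $\gamma_m$ plus the fact that these projections form an approximate unit of the crossed product (the paper phrases the surjectivity as an explicit limit computation, you as density of $\bigcup_m q_mAq_m$, which is the same point). The only cosmetic difference is that the paper passes through the intermediate inductive limit $\varinjlim\{\iota(1)A\iota(1),\widetilde{\gamma}_{m,l+m}\}$ before mapping into $A$, whereas you invoke the universal property of the inductive limit once.
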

\begin{proof} As in Theorem 2.4 in \cite{Laca1} we know that $\beta(u_g)=V(g_0,0)\iota(1)$ and $\beta(s^n)=V(e_0,n)\iota(1)$, where $V$ represents $\overline{S}$ in the crossed product $C^*(P)^s\rtimes_{\overline{\alpha}}\overline{S}$.

Define
\begin{equation*}
\begin{split}
\widetilde{\gamma}_{m,l+m}:\iota(1)(C^*(P)^s\rtimes_{\overline{\alpha}}\overline{S})\iota(1)&\rightarrow \iota(1)(C^*(P)^s\rtimes_{\overline{\alpha}}\overline{S})\iota(1)\\
x  &\mapsto V(e,l)xV(e,l)^*.
\end{split}
\end{equation*}

Remembering from Definition \ref{defi2} that
\begin{equation*}
\begin{split}
\psi_{m,l+m}:\mathds{U}[\varphi]&\rightarrow\mathds{U}[\varphi]\\
															 x&\mapsto s^lx{s^*}^l,
\end{split}
\end{equation*}

we can conclude that
$$
\beta\circ\psi_{m,l+m}\circ\beta^{-1}= \widetilde{\gamma}_{m,l+m}
$$

which implies the existence of an isomorphism
$$
\overline{\beta}:\mathds{U}[\varphi]^s\rightarrow\D\lim_{\rightarrow}\{\iota(1)(C^*(P)^s
\rtimes_{\overline{\alpha}}\overline{S})\iota(1),\;\widetilde{\gamma}_{m,l+m}\}.
$$

Moreover for $k\geq 0$ set
\begin{equation*}
\begin{split}
\lambda_k:\iota(1)(C^*(P)^s\rtimes_{\overline{\alpha}}\overline{S})\iota(1)&\rightarrow C^*(P)^s\rtimes_{\overline{\alpha}}\overline{S}\\
z                                                                          &\mapsto V^*(e,k)zV(e,k).
\end{split}
\end{equation*}

As
\begin{equation*}
\begin{split}
\lambda_{l+m}\circ\widetilde{\gamma}_{m,l+m}(z)&=V^*(e,l+m)V(e,l)zV(e,l)^*V(e,l+m)\\
&=V^*(e,m)zV(e,m)=\lambda_m(z),\\
\end{split}
\end{equation*}

we have a $*$-homomorphism
$$
\lambda:\D\lim_{\rightarrow}\{\iota(1)(C^*(P)^s\rtimes_{\overline{\alpha}}\overline{S})\iota(1):\widetilde{\gamma}_{m,l+m}\} \rightarrow C^*(P)^s\rtimes_{\overline{\alpha}}\overline{S}.
$$

It is injective because each $\lambda_k$ is. Moreover as
$$
\lambda_k(\iota(1))=\overline{\alpha}_{(e,-k)}(\iota(1))V_{(e,0)}={s^*}^k\iota(1)s^kV_{(e,0)}
$$

is an approximate unit for $C^*(P)^s\rtimes_{\overline{\alpha}}\overline{S}$, for $z\in C^*(P)^s\rtimes_{\overline{\alpha}}\overline{S}$ we have
\begin{equation*}
\begin{split}
&\D\lim_k\lambda_k(\iota(1)V(e,k)zV^*(e,k)\iota(1)\iota(1))\\
=&\D\lim_k[\lambda_k(\iota(1)V(e,k)zV^*(e,k)\iota(1))\lambda_k(\iota(1))]\\
=&\D\lim_k[V^*(e,k)\iota(1)V(e,k)zV^*(e,k)\iota(1)V(e,k)][{s^*}^k\iota(1)s^kV_{(e,0)}]\\
=&\D\lim_k{s^*}^k\iota(1)s^kV_{(e,0)}z{s^*}^k\iota(1)s^kV_{(e,0)}{s^*}^k\iota(1)s^kV_{(e,0)}\\
=&z,
\end{split}
\end{equation*}

and so $\lambda$ is surjective.

Consequently $
\mathds{U}[\varphi]^s\cong C^*(P)^s\rtimes_{\overline{\alpha}}\overline{S}.
$
\end{proof}

\begin{example}\em Consider a surjective endomorphism $\varphi$ of a group $G$. The surjectivity of $\varphi$ implies that $s$ is an isometry (by item (iii) of Definition \ref{defi1}). Moreover Proposition \ref{Prop1GN} together with the fact that $C^*(P)=\mathds{C}$\, implies that
$$
\mathds{U}[\varphi]\cong C^*(G)\rtimes_\tau\mathds{N},
$$

where
$$
\tau: \mathds{N} \rightarrow \hbox{End}(C^*(G))
$$

is defined by
$$
\tau_n(u_g)=u_{\varphi^n(g)}.
$$

Using the six-term exact sequence introduced by Khoshkam and Skandalis in \cite{Khoska}, one can build the sequence
$$
\begin{array}{ccccc}
 K_0(C^*(G))              &\xrightarrow{1-K_0(\tau_1)} &K_0(C^*(G)) &\rightarrow              & K_0(\mathds{U}[\varphi]) \\
  \uparrow                &                          &            &                         & \downarrow \\
 K_1(\mathds{U}[\varphi]) &\leftarrow                &K_1(C^*(G)) &\xleftarrow{1-K_1(\tau_1)} & K_1(C^*(G)) \\
\end{array}
$$

(note that this example is very similar to Example \ref{ex2}).
\end{example}
\begin{flushright}

  $\square$

  \end{flushright}

\section{Properties}

The crossed product description in last section implies two nice properties of $\mathds{U}[\varphi]$.

\begin{proposition}\label{propl14}If $G$ is amenable then $\mathds{U}[\varphi]$ is nuclear.
\end{proposition}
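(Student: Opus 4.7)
The plan is to leverage Theorem~\ref{teo22}, which realizes $\mathds{U}[\varphi]$ as the full corner $\iota(1)(C^*(P)^s\rtimes_{\overline{\alpha}}\overline{S})\iota(1)$. Since a full corner is Morita equivalent to the ambient algebra, and nuclearity passes to hereditary subalgebras (and, in particular, to Morita-equivalent algebras), it suffices to show that $C^*(P)^s\rtimes_{\overline{\alpha}}\overline{S}$ is nuclear.

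First, by Proposition~\ref{prop4} we have $C^*(P)^s\cong C_0(\mathcal{G})$, which is commutative and therefore nuclear. So the coefficient algebra of the dilated crossed product is already nuclear regardless of any hypothesis on $G$.

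Next I would verify that the acting group $\overline{S}=\mathds{G}\rtimes_{\overline{\varphi}}\mathds{Z}$ is amenable. Here $\mathds{G}=\D\lim_{\rightarrow}\{G_n:\varphi^n\}$ is a countable directed colimit of copies of $G$; since the class of amenable (discrete) groups is closed under directed unions, $\mathds{G}$ is amenable. Then $\overline{S}$ fits into the short exact sequence
$$
1\longrightarrow\mathds{G}\longrightarrow\overline{S}\longrightarrow\mathds{Z}\longrightarrow 1,
$$
and amenability is preserved under extensions, so $\overline{S}$ is amenable.

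Now the standard permanence result for crossed products applies: a (full) crossed product of a nuclear C$^*$-algebra by a discrete amenable group is again nuclear (full and reduced crossed products coincide in this setting). Applying this to $C_0(\mathcal{G})\rtimes_{\overline{\alpha}}\overline{S}$ yields that this dilated algebra is nuclear, and passing to the full corner via Theorem~\ref{teo22} gives nuclearity of $\mathds{U}[\varphi]$. I do not anticipate a genuine obstacle: the argument is a chain of standard permanence properties, and the only point requiring care is that one must use the group crossed product provided by the Laca dilation (Theorem~\ref{teo22}) rather than the original semigroup crossed product description from Theorem~\ref{teo2}, since the classical nuclearity theorem is phrased for group actions.
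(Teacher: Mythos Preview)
Your proposal is correct and follows essentially the same route as the paper: show $\overline{S}$ is amenable (via closure under direct limits and extensions), use commutativity of $C^*(P)^s$ to get nuclearity of the coefficient algebra, apply the standard result that crossed products of nuclear algebras by amenable groups are nuclear, and then descend to the corner via the hereditary-subalgebra permanence property. The paper's proof is exactly this chain of permanence facts, citing R{\o}rdam for the crossed-product step and Choi--Effros for the hereditary-subalgebra step.
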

\begin{proof} $G$ being amenable implies that $\overline{S}$ is amenable as well (amenability is closed under direct limits by \cite{vNeu} and also closed under semidirect products). But we know that $C^*(P)^s$ is nuclear because it is commutative, therefore $C^*(P)^s\rtimes_{\overline{\alpha}}\overline{S}$ is nuclear by Proposition 2.1.2 in \cite{Ror}. Since hereditary C*-subalgebras of nuclear C*-algebras are nuclear by Corollary 3.3 (4) in \cite{ChoiEffros}, we conclude that
$$
\mathds{U}[\varphi]\cong C^*(P)\rtimes_{\alpha}S\cong i(1)(C^*(P)^s\rtimes_{\overline{\alpha}}\overline{S})i(1)
$$

is nuclear.
\end{proof}

\begin{proposition}\label{propl15}If $G$ is amenable then $\mathds{U}[\varphi]$ satisfies the UCT property.
\end{proposition}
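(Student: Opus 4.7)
The plan is to leverage the group crossed-product description from Theorem \ref{teo22} together with a UCT-preservation result for amenable group actions. By that theorem, $\mathds{U}[\varphi]$ sits as the full corner $\iota(1)(C^*(P)^s\rtimes_{\overline{\alpha}}\overline{S})\iota(1)$ and is therefore strongly Morita equivalent to $C^*(P)^s\rtimes_{\overline{\alpha}}\overline{S}$. Since the UCT class is closed under strong Morita equivalence (indeed under KK-equivalence), it will be enough to prove that this full group crossed product belongs to the UCT class.

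First I would note that by Proposition \ref{prop4}, $C^*(P)^s\cong C_0(\mathcal{G})$ is commutative, so it automatically lies in the bootstrap class and satisfies the UCT. Next, as at the beginning of the proof of Proposition \ref{propl14}, the amenability of $G$ is inherited by $\mathds{G}=\lim_{\rightarrow}\{G_n:\varphi^n\}$, an inductive limit of copies of $G$, and then by the semidirect product $\overline{S}=\mathds{G}\rtimes_{\overline{\varphi}}\mathds{Z}$, since semidirect products of amenable groups are amenable. So I am reduced to showing that the crossed product of a separable commutative C$^*$-algebra by a countable discrete amenable group lies in the UCT class.

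The main step is then to invoke Tu's theorem on amenable groupoids, specialised to the case of a discrete amenable group acting on a C$^*$-algebra already in the bootstrap class: this gives that $C_0(\mathcal{G})\rtimes_{\overline{\alpha}}\overline{S}$ itself lies in the UCT class. Morita invariance of the UCT then concludes the proof.

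The only nontrivial ingredient is the appeal to Tu's theorem: the amenability of $\overline{S}$ by itself only yields nuclearity of the crossed product, which was the content of Proposition \ref{propl14}, and one genuinely needs the sharper statement that the Baum--Connes assembly map is an isomorphism with coefficients for amenable groups in order to propagate UCT through the crossed product. A more hands-on alternative would split $\overline{S}=\mathds{G}\rtimes\mathds{Z}$, apply Pimsner--Voiculescu to the outer $\mathds{Z}$-action, and then analyse $C_0(\mathcal{G})\rtimes\mathds{G}$ as a suitable inductive limit reflecting the directed structure of $\mathds{G}$; but at some point one still has to invoke a UCT-preservation statement for amenable $G$-actions on commutative algebras, so some form of Tu's result appears to be essential.
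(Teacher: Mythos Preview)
Your proposal is correct and follows essentially the same route as the paper: identify $C^*(P)^s\rtimes_{\overline{\alpha}}\overline{S}$ with the C$^*$-algebra of the (amenable, since $\overline{S}$ is amenable) transformation groupoid over the commutative algebra $C_0(\mathcal{G})$, apply Tu's theorem to get UCT for the crossed product, and then transfer it to $\mathds{U}[\varphi]$ via the Morita equivalence from Theorem~\ref{teo22}. The paper's proof is more terse but invokes exactly the same ingredients in the same order.
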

\begin{proof} Since $C^*(P)^s$ is commutative, $C^*(P)^s\rtimes_{\overline{\alpha}}\overline{S}$ is isomorphic to a groupoid C$^*$-algebra. When the group $G$ is amenable then $\overline{S}$ also is, and the respective groupoid is also amenable. Therefore using a result by Tu (\cite{tutu} Proposition 10.7), the crossed product satisfies UCT. By Morita equivalence, $\mathds{U}[\varphi]$ also satisfies it.
\end{proof}

We will now prove that our algebra $\mathds{U}[\varphi]$ is purely infinite and simple. We will proceed in the same way as in \cite{Cuntz2} and in many other papers: we present a particular faithful conditional expectation and a dense $*$-subalgebra of $\mathds{U}[\varphi]$ such that the conditional expectation of any positive element of this $*$-subalgebra can be described using a finite number of pairwise orthogonal projections.

For this purpose we will use the description in Theorem \ref{teo22} of $\mathds{U}[\varphi]$ as a corner of a group crossed product. To define the conditional expectation, we require the amenability of the group $G$: therefore the group $\overline{S}=\mathds{G}\rtimes_{\overline{\varphi}}\mathds{Z}$ (defined after Proposition \ref{propOre}) is also amenable (as mentioned in the proof of Proposition \ref{propl14}). This condition is necessary because we want to use the well-known result which says that there exists a canonical faithful conditional expectation on the reduced group crossed product, and the amenability of $\overline{S}$ implies that both the full and the reduced group crossed products (implemented by $\overline{S}$-actions) are isomorphic.

The main tool of this section is the following (proven in Proposition 5.2 of \cite{Li1}).
\begin{proposition}\label{prop13}Let $\widetilde{A}$ be a dense $*$-subalgebra of a unital C$^*$-algebra $A$. Assume that $\epsilon$ is a faithful conditional expectation on $A$ such that for every $0\neq x\in\widetilde{A}_{+}$ there exist finitely many projections $f_i\in A$ with
\begin{itemize}
  \item[(i)] $f_i\bot f_j$,$\forall\; i\neq j$;
  \item[(ii)] $f_i\sim_{s_i} 1$, via\footnote{I.e.: $\exists\; s_i$ isometries such that $s_is_i^*=f_i$,$\forall\; i$} isometries $s_i\in A$, $\forall\; i$;
  \item[(iii)] $\left\|\D\sum_if_i\epsilon(x)f_i\right\|=\|\epsilon(x)\|$;
  \item[(iv)] $f_ixf_i=f_i\epsilon(x)f_i\in\mathds{C}f_i$,$\forall\; i$.
\end{itemize}
Then $A$ is purely infinite and simple.
\end{proposition}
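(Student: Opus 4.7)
The plan is to verify the Cuntz criterion for pure infiniteness with simplicity: a unital C*-algebra $A$ is purely infinite and simple if and only if for every $0\neq a\in A_+$ and every $\eta>0$ there exists $v\in A$ with $\|v^*av-1\|<\eta$. Simplicity then follows because $v^*av=1$ forces any ideal containing $a$ to contain the unit; pure infiniteness follows because this criterion passes to hereditary subalgebras.

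First I will treat the case $x\in\widetilde{A}_{+}\setminus\{0\}$, for which I expect to obtain the exact equality $v^*xv=1$. Apply (i)--(iv) to produce $f_i$, $s_i$. Condition (iv) rewrites as $f_ixf_i=\lambda_i f_i$ for scalars $\lambda_i\geq 0$ (positivity from $\epsilon(x)\geq 0$). The orthogonality in (i) places $\sum_i\lambda_i f_i$ in the commutative C*-subalgebra generated by the $f_i$, which is isometrically a copy of $\mathds{C}^{k+1}$, so
\begin{equation*}
\left\|\sum_i\lambda_i f_i\right\|=\max_i\lambda_i.
\end{equation*}
Condition (iii) identifies this maximum with $\|\epsilon(x)\|$, which is strictly positive by faithfulness of $\epsilon$. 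Fix $i_0$ realizing the maximum, so $\lambda_{i_0}>0$. From (ii), $s_{i_0}^*s_{i_0}=1$ and $s_{i_0}s_{i_0}^*=f_{i_0}$, giving $f_{i_0}s_{i_0}=s_{i_0}$ and $s_{i_0}^*f_{i_0}=s_{i_0}^*$. Consequently
\begin{equation*}
s_{i_0}^* x s_{i_0} = s_{i_0}^*f_{i_0}xf_{i_0}s_{i_0} = \lambda_{i_0}\, s_{i_0}^*f_{i_0}s_{i_0} = \lambda_{i_0}\cdot 1,
\end{equation*}
and $v:=\lambda_{i_0}^{-1/2}s_{i_0}$ satisfies $v^*xv=1$ exactly.

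Next I will extend to arbitrary nonzero $a\in A_+$ by approximation. Choose $y_0\in\widetilde{A}$ close to $a^{1/2}$ and set $y:=y_0^*y_0\in\widetilde{A}_+$; from $y-a=y_0^*(y_0-a^{1/2})+(y_0^*-a^{1/2})a^{1/2}$ one reads off $\|y-a\|\leq(\|y_0\|+\|a^{1/2}\|)\|y_0-a^{1/2}\|$, so $\|y-a\|$ can be made as small as desired, and $y\neq 0$ for sufficiently good approximation. Apply the previous step to $y$ to obtain $v_y=\lambda_{i_0}(y)^{-1/2}s_{i_0}$ with $v_y^*yv_y=1$. Then
\begin{equation*}
\|v_y^*av_y-1\|=\|v_y^*(a-y)v_y\|\leq\|v_y\|^2\|a-y\|=\frac{\|a-y\|}{\lambda_{i_0}(y)}.
\end{equation*}

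The main technical point is the simultaneous control of $\|a-y\|$ and the denominator $\lambda_{i_0}(y)=\|\epsilon(y)\|$. Contractivity of $\epsilon$ gives $|\|\epsilon(y)\|-\|\epsilon(a)\||\leq\|y-a\|$, and faithfulness forces $\|\epsilon(a)\|>0$ (since $a\geq 0$, $a\neq 0$ implies $\epsilon((a^{1/2})^*a^{1/2})>0$). Hence once $\|y-a\|<\tfrac{1}{2}\|\epsilon(a)\|$ we have $\lambda_{i_0}(y)\geq\tfrac{1}{2}\|\epsilon(a)\|$, and the displayed bound can be driven below any prescribed $\eta$. I expect this quantitative bookkeeping to be the main delicacy; the algebraic heart of the argument — that (iv) collapses $x$ to a scalar under compression by $f_{i_0}$ and (ii) then transports that scalar to a multiple of the identity — is completely clean once the right index $i_0$ is selected via (iii).
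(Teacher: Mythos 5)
Your proof is correct and is essentially the standard Cuntz-type argument that the paper itself relies on (it offers no proof of its own, deferring to Proposition 5.2 of Li's \emph{Ring C$^*$-algebras}): use (i) and (iv) to write $\sum_i f_i\epsilon(x)f_i=\sum_i\lambda_i f_i$ with norm $\max_i\lambda_i$, use (iii) and faithfulness to pick $i_0$ with $\lambda_{i_0}=\|\epsilon(x)\|>0$, conjugate by the isometry $s_{i_0}$ from (ii) to turn $x$ into the scalar $\lambda_{i_0}$, and then handle arbitrary $0\neq a\in A_+$ by approximating $a^{1/2}$ from $\widetilde{A}$ while bounding $\|\epsilon(y)\|$ from below via contractivity of $\epsilon$. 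The only cosmetic point is that the ``if and only if'' in your opening criterion is unnecessary (and its converse direction fails for $A=\mathds{C}$ under the hereditary-infinite-projection definition); for the definition of pure infiniteness actually used in this paper, namely that every nonzero $x$ admits $a,b$ with $axb=1$, your estimate $\|v^*(x^*x)v-1\|<1$ applied to $x^*x$ already gives the conclusion directly, with no appeal to hereditary subalgebras.
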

\begin{flushright}

  $\square$

  \end{flushright}

Moreover in order to find these projections it is also necessary to require that the injective endomorphism $\varphi$ is pure, i.e:
$$
\D\bigcap_{n\in\mathds{N}}\varphi^n(G)=\{e\}.
$$

In order to apply the proposition above the first step is to define a conditional expectation. As mentioned before, we require that the group $G$ is amenable. Remember the isomorphism from Theorem \ref{teo22}:
$$
\beta: \mathds{U}[\varphi]\rightarrow \iota(1)(C^*(P)^s\rtimes_{\overline{\alpha}}\overline{S})\iota(1).
$$

\begin{proposition}\label{prop1}There exists a faithful conditional expectation
\begin{equation*}
\begin{split}
\epsilon: \mathds{U}[\varphi]&\rightarrow \beta^{-1}(\iota(1)C^*(P)^s\iota(1))\\
{s^n}^*u_{h^{-1}}fu_{h'}s^m&\mapsto\left\{
                                      \begin{array}{ll}
                                        {s^n}^*u_{h^{-1}}fu_hs^n, & \hbox{if }n=m\hbox{ and }h=h'; \\
                                        0, & \hbox{otherwise.}
                                      \end{array}
                                    \right.
\end{split}
\end{equation*}

for all $h$, $h'\in G$ and $n$, $m\in\mathds{N}$.
\end{proposition}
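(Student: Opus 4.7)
The plan is to transport the canonical faithful conditional expectation on the group crossed product $C^*(P)^s\rtimes_{\overline{\alpha}}\overline{S}$ across the isomorphism $\beta$ of Theorem \ref{teo22}, and then read off the explicit formula. First, since $G$ is amenable, so is $\mathds{G}=\lim_{\rightarrow}\{G_n:\varphi^n\}$ (amenability is preserved under inductive limits) and therefore $\overline{S}=\mathds{G}\rtimes_{\overline{\varphi}}\mathds{Z}$ is amenable as well. As a consequence, the full and reduced crossed products coincide, and we have access to the standard faithful conditional expectation
\[
E:C^*(P)^s\rtimes_{\overline{\alpha}}\overline{S}\longrightarrow C^*(P)^s,
\]
which is determined on finite sums by $E\bigl(\sum_{(g,n)} a_{(g,n)} U_{(g,n)}\bigr)=a_{(e,0)}$.

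Next, because $\iota(1)$ is a projection lying in $C^*(P)^s$ and $E$ is a $C^*(P)^s$-bimodule map, the assignment $x\mapsto E(x)=\iota(1) E(x)\iota(1)$ restricts to a conditional expectation from the corner $\iota(1)(C^*(P)^s\rtimes_{\overline{\alpha}}\overline{S})\iota(1)$ onto $\iota(1) C^*(P)^s\iota(1)$. Faithfulness passes to the restriction: if $x\ge0$ sits in the corner and $E(x)=0$, then $x=0$ by faithfulness on the ambient algebra. Setting $\epsilon:=\beta^{-1}\circ E\circ\beta$ therefore produces a faithful conditional expectation from $\mathds{U}[\varphi]$ onto $\beta^{-1}(\iota(1) C^*(P)^s\iota(1))$.

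To verify the formula, I use the identity
\[
\beta({s^*}^n u_{h^{-1}} f u_{h'} s^m)=\iota(1) U^*_{(h,n)}\iota(f) U_{(h',m)}\iota(1)
\]
recorded right after Theorem \ref{teo22}. The covariance relation rewrites the interior as $\overline{\alpha}_{(h,n)^{-1}}(\iota(f))\,U_{(h,n)^{-1}(h',m)}$, and a direct computation with the semidirect product law on $\overline{S}$ gives
\[
(h,n)^{-1}(h',m)=\bigl(\overline{\varphi}^{-n}(h^{-1}h'),\,m-n\bigr),
\]
which equals the identity $(e,0)$ precisely when $n=m$ and $h=h'$ (using that $\overline{\varphi}$ is an automorphism of $\mathds{G}$). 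Hence $E$ annihilates the expression unless these equalities hold, and when they do the image equals $\iota(1)\overline{\alpha}_{(h,n)^{-1}}(\iota(f))\iota(1)$, whose $\beta$-preimage is exactly ${s^*}^n u_{h^{-1}} f u_h s^n$, matching the claimed formula.

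The main obstacle is purely bookkeeping rather than conceptual: one must confirm that the conditional expectation on the reduced (equivalently, by amenability, full) crossed product really does intertwine with the corner projection $\iota(1)$ as claimed, and that the inverse formula in the semidirect product $\mathds{G}\rtimes_{\overline{\varphi}}\mathds{Z}$ is applied with the right indices so that the degeneration condition $n=m$, $h=h'$ falls out cleanly. Once those identifications are pinned down, the density of the spanning family $\{{s^*}^n u_{h^{-1}} f u_{h'} s^m : n,m\in\mathds{N},\,h,h'\in G,\,f\in C^*(P)\}$ in $\mathds{U}[\varphi]$ shows that $\epsilon$ is completely described by the stated rule.
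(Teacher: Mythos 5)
Your proposal is correct and follows essentially the same route as the paper: both start from the canonical faithful conditional expectation on the (reduced, equals full by amenability of $\overline{S}$) crossed product $C^*(P)^s\rtimes_{\overline{\alpha}}\overline{S}$, compress it to the corner $\iota(1)(\cdot)\iota(1)$, and transport it through $\beta$. Your explicit verification of the formula via the covariance relation and the semidirect-product inverse is just a spelled-out version of what the paper dismisses as ``straightforward calculations.''
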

\begin{proof} As $\overline{S}$ is amenable, the isomorphism $\beta$ of Theorem \ref{teo22} can be expanded to include also the reduced group crossed product
$$
\mathds{U}[\varphi]\cong \iota(1)(C^*(P)^s\rtimes_{\overline{\alpha}}\overline{S})\iota(1)\cong
\iota(1)(C^*(P)^s\rtimes_{r,\overline{\alpha}}\overline{S})\iota(1).
$$

Let us denote the elements of $\overline{S}$ by $s$ and its identity by $e$. We will also use $\delta_s$ to denote the unitary elements implementing the action of $\overline{S}$ in the crossed product. Consider the well-known faithful conditional expectation on the reduced group crossed product:
\begin{equation*}
\begin{split}
E: C^*(P)^s\rtimes_{r,\overline{\alpha}}\overline{S}&\rightarrow C^*(P)^s\\
                                           x\delta_s&\mapsto \left\{
                                                               \begin{array}{ll}
                                                                 x, & \hbox{if }s=e; \\
                                                                 0, & \hbox{otherwise.}
                                                               \end{array}
                                                             \right.
\end{split}
\end{equation*}

Straightforward calculations show that the following is also a faithful conditional expectation:
\begin{equation*}
\begin{split}
\overline{E}: \iota(1)(C^*(P)^s\rtimes_{r,\overline{\alpha}}\overline{S})\iota(1)&\rightarrow \iota(1)C^*(P)^s\iota(1)\\
                                                        \iota(1)x\delta_s\iota(1)&\mapsto \left\{
                                                               \begin{array}{ll}
                                                                 \iota(1)x\iota(1), & \hbox{if }s=e; \\
                                                                         0, & \hbox{otherwise.}
                                                               \end{array}
                                                             \right.
\end{split}
\end{equation*}

Using the isomorphism $\beta$ we can rewrite $\overline{E}$ to conclude that we have the faithful conditional expectation
\begin{equation*}
\begin{split}
\epsilon: \mathds{U}[\varphi]&\rightarrow \beta^{-1}(\iota(1)C^*(P)^s\iota(1))\\
{s^n}^*u_{h^{-1}}fu_{h'}s^m&\mapsto\left\{
                                      \begin{array}{ll}
                                        {s^n}^*u_{h^{-1}}fu_hs^n, & \hbox{if }n=m\hbox{ and }h=h'; \\
                                        0, & \hbox{otherwise.}
                                      \end{array}
                                    \right.
\end{split}
\end{equation*}
\end{proof}

Now, to find projections to describe the image of $y\in span(Q)_{+}$ under the conditional expectation $\epsilon$ presented above, remember that $y$ has the form
$$
y=\D\sum_{m,n,h,h',f}a_{(m,n,h,h',f)}{s^*}^nu_{h^{-1}}fu_{h'}s^m
$$

for $m$, $n\in\mathds{N}$, $h$, $h'\in G$, $f\in P$ and $a_{(\ldots)}\neq 0$. As we have finitely many projections of $C^*(P)$ in the description of $y$, write them all as sums of (altogether $N$) mutually orthogonal projections $u_{g_i}s^M{s^*}^Mu_{g_i^{-1}}$, with $g_i\in G/\varphi^M(G)$, for all $1\leq i\leq N$ and $M\in\mathds{N}$ big enough.
\begin{proposition}\label{prop2}There are $N$ pairwise orthogonal projections $f_1,\ldots f_N\in P$ such that
\begin{enumerate}
  \item[(i)] $\Phi$ defined by
\begin{equation*}
\begin{split}
C^*(\{u_{g_1}s^M{s^*}^Mu_{g_1^{-1}},\ldots,u_{g_N}s^M{s^*}^Mu_{g_N^{-1}}\})&\rightarrow C^*(\{f_1,\ldots,f_N\})\\
z&\mapsto\D\sum_{i=1}^Nf_izf_i
\end{split}
\end{equation*}

is an isomorphism, and
  \item[(ii)] $\Phi(\epsilon(y))=\D\sum_{i=1}^Nf_iyf_i$, $\forall\; y\in\mathds{U}[\varphi]$.
\end{enumerate}
\end{proposition}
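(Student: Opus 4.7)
My plan is to construct the $f_i$'s as refinements of the projections $u_{g_i}s^M{s^*}^Mu_{g_i^{-1}}$ to a higher level $L\geq M$, chosen so that the off-diagonal terms of $y$ are annihilated when sandwiched between two copies of $f_i$. Concretely, I will take $f_i=u_{z_i}s^L{s^*}^Lu_{z_i^{-1}}$ with $z_i=g_i\varphi^M(k_i)$ for elements $k_i\in G$ and $L$ to be selected. Since $z_i$ lies in the same $\varphi^M(G)$-coset as $g_i$, each $f_i$ is automatically a subprojection of $u_{g_i}s^M{s^*}^Mu_{g_i^{-1}}$; and since distinct $g_i$'s belong to distinct $\varphi^M(G)$-cosets (hence also to distinct $\varphi^L(G)$-cosets) the $f_i$'s are pairwise orthogonal. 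Part~(i) follows at once: the source C$^*$-algebra is commutative with minimal projections the $u_{g_i}s^M{s^*}^Mu_{g_i^{-1}}$, and $\Phi$ maps each of these to the corresponding $f_i$, giving an isomorphism of two commutative $N$-dimensional C$^*$-algebras.

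For part~(ii), by linearity I expand each $f\in P$ appearing in $y$ in the orthogonal basis $\{u_{g_i}s^M{s^*}^Mu_{g_i^{-1}}\}$, writing $y$ as a finite linear combination of monomials of the form $X={s^*}^nu_{h^{-1}}(u_{g_j}s^M{s^*}^Mu_{g_j^{-1}})u_{h'}s^m$. The diagonal monomials ($n=m$, $h=h'$) are those preserved by $\epsilon$; the off-diagonal ones are killed by $\epsilon$. Since $\Phi(\epsilon(y))=\sum_i f_i\epsilon(y)f_i$ by definition of $\Phi$, the identity $\sum_i f_iyf_i=\Phi(\epsilon(y))$ is equivalent to $f_i(y-\epsilon(y))f_i=0$ for each $i$, i.e.\ to $f_iXf_i=0$ for every off-diagonal monomial $X$ occurring in $y$ (the diagonal case is trivial). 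It therefore remains to find $L$ and the $k_i$'s making this cancellation happen.

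The off-diagonal cancellation rests on a single identity combined with purity of $\varphi$. Pushing all the $u$'s past the $s$'s and $s^*$'s in $f_iXf_i$ via the commutation relations $s^ku_g=u_{\varphi^k(g)}s^k$ and $u_g{s^*}^k={s^*}^ku_{\varphi^k(g)}$ brings the expression into a form containing an inner factor of the shape ${s^*}^Lu_ws^L$, where $w\in G$ is a specific word in $z_i$, $h$, $h'$, $g_j$, and iterates of $\varphi$. From the pairwise orthogonality of the family $u_as^L{s^*}^Lu_{a^{-1}}$ for $a$ ranging over $G/\varphi^L(G)$, the projections $u_ws^L{s^*}^Lu_{w^{-1}}$ and $s^L{s^*}^L$ are orthogonal whenever $w\notin\varphi^L(G)$; multiplying this orthogonality relation by $s^L$ and ${s^*}^L$ on the appropriate sides and using that $s^L$ is an isometry yields ${s^*}^Lu_ws^L=0$ for all such $w$. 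It therefore suffices to select $L$ and the $k_i$'s so that for each of the finitely many off-diagonal monomials contributing to $y$ and each $i$, the associated $w=w(i,X)$ lies outside $\varphi^L(G)$. Purity $\bigcap_{n\in\mathds{N}}\varphi^n(G)=\{e\}$ guarantees that any finite collection of non-identity elements of $G$ can be placed outside $\varphi^L(G)$ by taking $L$ large, and the freedom to perturb $z_i$ by $\varphi^M(k_i)$ ensures that the $w(i,X)$ are non-identity once $(h,n)\neq(h',m)$ in $\overline{S}$.

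The main obstacle is the explicit bookkeeping between the off-diagonal data $(n,m,h,h',j,i)$ and the word $w(i,X)\in G$ extracted from $f_iXf_i$. One must compute $f_iXf_i$ by hand, identify exactly which $w$ appears, and verify that off-diagonality of $X$ combined with a generic choice of $k_i$ produces a non-identity $w$; then purity supplies the common $L$ making all the finitely many $w(i,X)$ escape $\varphi^L(G)$ simultaneously. The subcase $n\neq m$ needs slightly more care than the subcase $n=m$, $h\neq h'$, because the exponents of $s$ and $s^*$ in $f_iXf_i$ no longer match and one has to choose $L$ strictly larger than $\max(n,m,M)$ in order to reduce to the key identity, but the cancellation mechanism is the same in both cases.
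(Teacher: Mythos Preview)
Your plan is correct and mirrors the paper's proof: both take $f_i=u_{h_i}s^p{s^*}^pu_{h_i^{-1}}$ with $h_i$ (your $z_i$) in the $\varphi^M(G)$-coset of $g_i$ and $p$ (your $L$) chosen via purity so that every off-diagonal monomial is annihilated when sandwiched by $f_i$. The paper performs the cancellation by rewriting $f_iXf_i={s^*}^nu_{h^{-1}}[\,\cdot\,]fu_{h'}s^m$ with $[\,\cdot\,]$ a product of two projections at levels $n+p$ and $m+p$, expanding both to level $m+n+p$, and reading off the sufficient condition $\varphi^m(h_i^{-1})h'^{-1}h\varphi^n(h_i)\notin\varphi^p(G)$---precisely the word your reduction via ${s^*}^Lu_ws^L=0$ would produce (and your remark that one may need to perturb $h_i$ within its coset to ensure this word is non-identity when $n\neq m$ is a point the paper leaves implicit).
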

\begin{proof} Define
$$
f_i:=u_{h_i}s^p{s^*}^pu_{h_i^{-1}}
$$

for some $p\in\mathds{N}$ bigger than $M$ (in fact, we may choose $p$ as big as we want), where $g_i^{-1}h_i\in\varphi^M(G)$. This implies that the set of the $f_i$'s is orthogonal and that (i) holds.

For (ii), first note that when $\delta_{m,n}\delta_{h,h'}=1$ it is true that $\epsilon=Id$, and so (ii) is satisfied. So let us take a look on those summands in $y$ with $\delta_{m,n}\delta_{h,h'}=0$ (we will say that such an element has \emph{critical index} $(m,n,h,h',f)$). The conditional expectation $\epsilon$ maps these summands to 0 and in order for (ii) to be satisfied we need that, for all $1\leq i\leq N$,
$$
f_i{s^*}^nu_{h^{-1}}fu_{h'}s^mf_i=0.
$$

We calculate
\begin{equation*}
\begin{split}
&f_i{s^*}^nu_{h^{-1}}fu_{h'}s^mf_i\\
&={s^*}^nu_{h^{-1}}(u_hs^nf_i{s^*}^nu_{h^{-1}})f(u_{h'}s^mf_i{s^*}^mu_{h'^{-1}})u_{h'}s^m\\
&={s^*}^nu_{h^{-1}}[u_{h\varphi^n(h_i)}s^{n+p}{s^*}^{n+p}u_{\varphi^n(h_i^{-1})h^{-1}}\\ &u_{h'\varphi^m(h_i)}s^{m+p}{s^*}^{n+p}
u_{\varphi^m(h_i^{-1})h'^{-1}}]fu_{h'}s^m.
\end{split}
\end{equation*}

Now, analysing only the expression between the brackets,
\begin{equation*}
\begin{split}
&[u_{h\varphi^n(h_i)}s^{n+p}{s^*}^{n+p}u_{\varphi^n(h_i^{-1})h^{-1}}u_{h'\varphi^m(h_i)}s^{m+p}{s^*}^{m+p}
u_{\varphi^m(h_i^{-1})h'^{-1}}]\\
&=\left(\D\sum_{g\in G/\varphi^m(G)}u_{h\varphi^n(h_i)\varphi^{n+p}(g)}s^{m+n+p}{s^*}^{m+n+p}u_{\varphi^{n+p}(g^{-1})\varphi^n(h_i^{-1})h^{-1}}\right)\\
&\times\left(\D\sum_{k\in G/\varphi^n(G)}u_{h'\varphi^m(h_i)\varphi^{m+p}(k)}s^{m+n+p}{s^*}^{m+n+p}u_{\varphi^{m+p}(k^{-1})\varphi^m(h_i^{-1})h^{-1}}\right).
\end{split}
\end{equation*}

This product will be zero if the two sums are mutually orthogonal, which happens if for all $g\in G/\varphi^m(G)$ and $k\in G/\varphi^n(G)$,
\begin{equation*}
h\varphi^n(h_i)\varphi^{n+p}(g)\varphi^{m+n+p}(x)\neq h'\varphi^m(h_i)\varphi^{m+p}(k)\varphi^{m+n+p}(y),\;\forall\; x,y\in G
\end{equation*}

which is equivalent to
\begin{equation*}
\varphi^{m+p}(k^{-1})\varphi^m(h_i^{-1})h'^{-1}h\varphi^n(h_i)\varphi^{n+p}(g)\neq\varphi^{m+n+p}(z),\;\forall\; z\in G.
\end{equation*}

A sufficient condition for this to hold is that $\varphi^m(h_i^{-1})h'^{-1}h\varphi^n(h_i)\neq\varphi^p(z)$ $\forall\; z\in G$, for each critical index $(m,n,h,h',f)$. Using the fact that $\varphi$ is pure we may choose some $p_{(m,n,h,h',f)}\in\mathds{N}$ such that
$$
\varphi^m(h_i^{-1})h'^{-1}h\varphi^n(h_i)\notin\varphi^{p_{(m,n,h,h',f)}}(G).
$$

As we have a finite number of critical indices, it is sufficient to take the
biggest $p_{(m,n,h,h',f)}$ and call it $p$.
\end{proof}

To understand this choice of $p$, consider the following example.

\begin{example}\em\label{ex1} Let $G=\mathds{Z}$ and
\begin{equation*}
\begin{split}
\varphi:\mathds{Z}&\rightarrow\mathds{Z}\\
n                 &\mapsto 3n.
\end{split}
\end{equation*}
Then we have $\frac{G}{\varphi(G)}=\{\overline{0},\overline{1},\overline{2}\}$, $\frac{G}{\varphi^2(G)}=\{\overline{0},\overline{1},\ldots,\overline{8}\}$ and in general
$$
\frac{G}{\varphi^n(G)}=\{\overline{0},\ldots,\overline{3^n-1}\}=\mathds{Z}_{3^n}.
$$

Take the following $y\in span(Q)$
\begin{equation*}
\begin{split}
y=&2{s^*}^2u_{30}(u_5s^{4}{s^*}^4u_{-5})u_{2187}s^{1}-4{s^*}^7u_0(u_{10}s^{4}{s^*}^4u_{-10})u_{-5}s^9\\ &+{s^*}^8u_{20}s^{4}{s^*}^4u_{-20}s^8
\end{split}
\end{equation*}

and note that in $y$ we have two terms with critical indices (the first ones).

Using the notation of the above proposition, $M=4$ and, for the first term of $y$: $n=2$, $m=1$, $h=-30$, $h'=2187$ and $g_1=5$. Choosing $h_1=86$, it is true that $-g_1+h_1=-5+86=81\in\varphi(G)$. Then:
$$
\varphi^{1}(-86)-2187-30+\varphi^{2}(86)=-1701=\varphi^5(7)\notin\varphi^6(\mathds{Z}).
$$

So $p_1:=p_{(1,2,-30,2187,f)}=6$ (or bigger). For the second term it is not hard to see that $p_2=1$:
$$
\varphi^{9}(-91)+5-0+\varphi^{7}(91)=-1592131\notin\varphi^1(\mathds{Z}).
$$

So one can choose any $p\geq 6$.
\end{example}
\begin{flushright}

  $\square$

  \end{flushright}

Using the description above of the faithful conditional expectation
$$
\epsilon: \mathds{U}[\varphi]\rightarrow \beta^{-1}(\iota(1)C^*(P)^s\iota(1))
$$

where $P=\{u_gs^n{s^*}^nu_{g^{-1}}:\;g\in G,\; n\in\mathds{N}\}$, together with the dense $*$-subalgebra
$$
\hbox{span}(Q)=\hbox{span}(\{{s^*}^nu_{h^{-1}}fu_{h'}s^m:\;f\in P, h, h'\in G, n,m\in\mathds{N}\}),
$$

we can prove the main result of this section by applying Propositions \ref{prop2} and \ref{prop13} (the definition of pure infiniteness comes from \cite{Cuntz2}).
\begin{theorem}\label{teo1}Let $G$ be a discrete countable amenable group and $\varphi$ a pure injective endomorphism of $G$ with finite cokernel. Then the C$^*$-algebra $\mathds{U}[\varphi]$ is simple and purely infinite, i.e. for all non zero $x\in\mathds{U}[\varphi]$ there are $a$, $b\in\mathds{U}[\varphi]$ with $axb=1$.
\end{theorem}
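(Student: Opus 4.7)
The plan is to apply Proposition \ref{prop13} with $A = \mathds{U}[\varphi]$, the dense $*$-subalgebra $\widetilde{A} = \operatorname{span}(Q)$, and the faithful conditional expectation $\epsilon$ constructed in Proposition \ref{prop1}. Most of the work has already been done in Proposition \ref{prop2}; the theorem then reduces to checking the four hypotheses of Proposition \ref{prop13} using the projections produced there.

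Fix $0 \neq y \in \operatorname{span}(Q)_+$ and decompose the finitely many projections of $C^*(P)$ appearing in $y$ into pairwise orthogonal projections $u_{g_i}s^M{s^*}^M u_{g_i^{-1}}$, $1 \leq i \leq N$, as in the paragraph preceding Proposition \ref{prop2}. Let $f_1,\ldots,f_N$ be the corresponding projections from that proposition, which have the explicit form $f_i = u_{h_i}s^p{s^*}^p u_{h_i^{-1}}$ with $g_i^{-1}h_i \in \varphi^M(G)$. Condition (i) of Proposition \ref{prop13} is then precisely the orthogonality asserted in Proposition \ref{prop2}. For (ii), the element $s_i := u_{h_i}s^p \in \mathds{U}[\varphi]$ is an isometry (since $u_{h_i}$ is unitary and $s$ is an isometry) and satisfies $s_i s_i^* = f_i$.

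For (iv), Proposition \ref{prop2}(ii) gives $\sum_i f_i y f_i = \Phi(\epsilon(y)) = \sum_i f_i \epsilon(y) f_i$; multiplying by $f_j$ on both sides and using (i) yields $f_j y f_j = f_j \epsilon(y) f_j$, and this lies in $\mathds{C}f_j$ because $\Phi$ takes values in $C^*(\{f_1,\ldots,f_N\})$, the linear span of the pairwise orthogonal projections $f_j$. For (iii), the map $\Phi$ of Proposition \ref{prop2}(i) is a $*$-isomorphism between two finite-dimensional commutative C$^*$-algebras, hence isometric, so
\[
\Big\|\D\sum_i f_i \epsilon(y) f_i\Big\| = \|\Phi(\epsilon(y))\| = \|\epsilon(y)\|.
\]

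All four hypotheses of Proposition \ref{prop13} being verified, Proposition \ref{prop13} immediately yields that $\mathds{U}[\varphi]$ is simple and purely infinite, as asserted. The main obstacle, absorbed into Proposition \ref{prop2}, was the choice of $p$ large enough (for which purity of $\varphi$ is indispensable, as illustrated by Example \ref{ex1}) to ensure that each $f_i$ annihilates every non-critical summand ${s^*}^n u_{h^{-1}} f u_{h'} s^m$ of $y$ with $(n,h) \neq (m,h')$; beyond this, the present proof is essentially a formal assembly of Propositions \ref{prop1}, \ref{prop2} and \ref{prop13}.
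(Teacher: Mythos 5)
Your proof is correct and takes exactly the route the paper intends: Theorem \ref{teo1} is stated there with no written argument beyond the remark that it follows by applying Propositions \ref{prop2} and \ref{prop13}, and your verification of hypotheses (i)--(iv) (orthogonality and $f_i=s_is_i^*$ from the explicit form $f_i=u_{h_i}s^p{s^*}^pu_{h_i^{-1}}$, then (iii) and (iv) from the fact that $\Phi$ is an isometric isomorphism onto the span of the pairwise orthogonal $f_j$) supplies precisely the omitted details. One terminological slip: the summands with $(n,h)\neq(m,h')$ that the $f_i$ must annihilate are the ones the paper calls \emph{critical}, not non-critical.
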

\begin{flushright}

  $\square$

  \end{flushright}

\begin{corollary} When satisfied the conditions of the theorem above, the universal C$^*$-algebra $\mathds{U}[\varphi]$ is isomorphic to $C_r^*[\varphi]$, as defined in Definitions \ref{defi1} and \ref{defi1red} respectively.
\end{corollary}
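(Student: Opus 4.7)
The plan is to invoke the canonical surjection from $\mathds{U}[\varphi]$ onto $C_r^*[\varphi]$ and combine it with the simplicity statement in Theorem \ref{teo1}.

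First I would recall that, immediately after Definition \ref{defi1}, the author observes that the generators $U_g$ and $S$ of $C_r^*[\varphi]\subseteq\mathcal{L}(l^2(G))$ satisfy relations (i), (ii) and (iii) of Definition \ref{defi1}: indeed the $U_g$'s are unitaries with $U_gU_h=U_{gh}$, $S$ is an isometry with $SU_g=U_{\varphi(g)}S$, and a direct computation on the basis $\{\xi_h\}$ shows that the projections $U_gSS^*U_{g^{-1}}$ are the orthogonal projections onto $\ell^2(g\varphi(G))$, whose sum over a set of coset representatives of $G/\varphi(G)$ is the identity of $\mathcal{L}(\ell^2(G))$. Thus by the universal property of $\mathds{U}[\varphi]$ there is a $*$-homomorphism
\begin{equation*}
\pi:\mathds{U}[\varphi]\longrightarrow C_r^*[\varphi],\qquad u_g\mapsto U_g,\quad s\mapsto S,
\end{equation*}
whose image is the $C^*$-algebra generated by $\{U_g\}$ and $S$, hence all of $C_r^*[\varphi]$; so $\pi$ is surjective.

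Next I would argue that $\pi$ is nonzero: since $U_e=\mathbb{1}_{\mathcal{L}(\ell^2(G))}\neq 0$, the image of $\pi$ contains the identity operator, so $\pi$ is not the zero homomorphism. By Theorem \ref{teo1}, $\mathds{U}[\varphi]$ is simple, so its only closed two-sided ideals are $\{0\}$ and $\mathds{U}[\varphi]$ itself. Since $\ker\pi$ is a proper closed two-sided ideal of $\mathds{U}[\varphi]$, it must be $\{0\}$, hence $\pi$ is injective.

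Combining injectivity and surjectivity, $\pi$ is a $*$-isomorphism $\mathds{U}[\varphi]\cong C_r^*[\varphi]$. There is essentially no obstacle here: all the technical work is in Theorem \ref{teo1}, and the corollary is a one-line consequence of simplicity together with the existence of the canonical quotient map. If anything, the only thing to be careful about is making the nontriviality of $\pi$ explicit, which as noted follows from $\pi(u_e)=\mathbb{1}\neq 0$.
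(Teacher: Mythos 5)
Your argument is correct and is exactly the one the paper intends: the corollary is stated without proof there, relying on the canonical surjection $\mathds{U}[\varphi]\twoheadrightarrow C_r^*[\varphi]$ noted after Definition \ref{defi1} together with the simplicity from Theorem \ref{teo1}, so that the kernel of the (nonzero) quotient map must vanish. Your explicit verification of relation (iii) for the concrete operators and of the nontriviality of $\pi$ just makes precise what the paper leaves implicit.
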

\begin{flushright}

  $\square$

  \end{flushright}

\begin{theorem} If the conditions of the theorem above are satisfied, the universal C$^*$-algebra $\mathds{U}[\varphi]$ is a Kirchberg algebra satisfying the UCT property.
\end{theorem}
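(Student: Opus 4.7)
The strategy is simply to assemble the pieces already proved in the paper and verify the missing separability condition. Recall that by definition a Kirchberg algebra is a separable, nuclear, simple, purely infinite C$^*$-algebra. Under the hypotheses, $G$ is discrete countable amenable and $\varphi$ is a pure injective endomorphism with finite cokernel, so we have at our disposal Theorem \ref{teo1}, Proposition \ref{propl14}, and Proposition \ref{propl15}.

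First, I would observe that $\mathds{U}[\varphi]$ is separable: since $G$ is countable, the universal generating set $\{u_g:g\in G\}\cup\{s\}$ from Definition \ref{defi1} is countable, and the $*$-algebra they generate (with rational-complex coefficients) is a countable dense subset, so $\mathds{U}[\varphi]$ is separable. Next, Theorem \ref{teo1} provides simplicity and pure infiniteness in exactly the sense required. Proposition \ref{propl14} provides nuclearity, using amenability of $G$; and Proposition \ref{propl15} provides the UCT, again using amenability of $G$. Combining these four properties gives that $\mathds{U}[\varphi]$ is a Kirchberg algebra satisfying the UCT.

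There is essentially no obstacle here; this theorem is a repackaging of the main results of the paper into the standard terminology used in the Elliott/Kirchberg classification program. The only mildly non-trivial observation is separability, which is immediate from the countability of $G$ and the fact that $\mathds{U}[\varphi]$ is defined as a universal C$^*$-algebra on a countable generating set with finitely many types of relations.
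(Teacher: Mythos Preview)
Your proposal is correct and matches the paper's approach: the paper gives no explicit proof at all (only a $\square$), treating the statement as an immediate consequence of Theorem \ref{teo1}, Proposition \ref{propl14}, and Proposition \ref{propl15}. Your write-up simply makes this explicit and, usefully, supplies the separability check that the paper leaves tacit.
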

\begin{flushright}

  $\square$

  \end{flushright}

It would be interesting to know if the conditions of the theorem above are also necessary: if we construct the C$^*$-algebra associated with some injective endomorphism of an amenable group, is it simple and purely infinite only if $\varphi$ is pure? Unfortunately we don't answer this question here, but the next trivial example gives some idea about this direction.
\begin{example}\em\label{ex0} For some commutative group $G$ (thus amenable), consider $\varphi = id_G$.

As $u_gs=su_g$ for all $g\in G$ ($\varphi$ is trivial), our C$^*$-algebra will be commutative. Now, as $\frac{G}{\varphi(G)}$ has only the element $\{e\}$, condition (iii) of Definition \ref{defi1} implies that the isometry $s$ is a unitary. Then $\mathds{U}[\varphi]$ is the commutative C$^*$-algebra generated by the unitaries $\{u_g, s:g\in G\}$, and this one is the non-simple tensor product $C^*(G)\otimes C^*(\mathds{Z})=C^*(G)\otimes C(\mathcal{S}^1)$.

Moreover using the K\"{u}nneth Formula \cite{Schoc} we conclude that
\begin{equation*}
K_0(\mathds{U}[\varphi])=K_1(\mathds{U}[\varphi])=K_0(C^*(G))\oplus K_1(C^*(G)).
\end{equation*}
\end{example}
\begin{flushright}

  $\square$

  \end{flushright}

\section{Description of $\mathds{U}[\varphi]$ via group partial crossed products}\label{partialcrpr}

In \cite{BoEx} Boava and Exel constructed a partial group algebra isomorphic to the C$^*$-algebra $\mathds{U}[R]$ associated with a integral domain $R$ \cite{Culi1}. Consequently due to Theorem 4.4 of \cite{ExLaQu} one can define a certain partial crossed product which is isomorphic to $\mathds{U}[R]$. With the latter description it is proven in \cite{BoEx}, using only tools from partial crossed products, that if $R$ is not a field then $\mathds{U}[R]$ is simple (which is part of the conclusion of Li \cite{Li1}, namely, Corollary 5.14).

In this section we will present analogous results adapted to our case, i.e., given a C$^*$-algebra $\mathds{U}[\varphi]$ associated with some injective endomorphism $\varphi$ of a group $G$ with unit $e$, we will show that $\mathds{U}[\varphi]$ can also be viewed as a partial group algebra and, consequently, as a partial crossed product. The ideas follow the ones presented in \cite{BoEx}.

With this description we show that when $G$ is amenable we can rewrite the faithful conditional expectation $\epsilon$ presented in Proposition \ref{prop1} in terms of the partial group crossed product. To finish we use a well known result from the theory of group partial crossed products to prove a weaker result than Theorem \ref{teo1}: if $G$ is commutative and $\varphi$ is pure then $\mathds{U}[\varphi]$ is simple.

We start with an introduction to partial actions, partial crossed products and partial group algebras, before presenting the right isomorphisms and descriptions of $\mathds{U}[\varphi]$.

\begin{definition}\label{pcpdefi1}A partial action $\alpha$ of a group $G$ on a C$^*$-algebra $A$ is a collection of closed two-sided ideals $\{D_g\}_{g\in G}$ of $A$ and $*$-isomorphisms $\alpha_g: D_{g^{-1}}\rightarrow D_g$ satisfying
\begin{itemize}
  \item[(PA1)] $D_e=A$;
  \item[(PA2)] $\alpha_h^{-1}(D_h\cap D_{g^{-1}})\subseteq D_{(gh)^{-1}}$;
  \item[(PA3)] $\alpha_g\circ\alpha_h(x)=\alpha_{gh}(x)$, $\forall\; x\in \alpha_h^{-1}(D_h\cap D_{g^{-1}})$.
\end{itemize}
\end{definition}

Using (PA1) - (PA3) one can show that $\alpha_e=$ id$_A$, $\alpha_{g^{-1}}=\alpha_g^{-1}$ and that\nl $\alpha_h^{-1}(D_h\cap D_{g^-1})=D_{(gh)^{-1}}\cap D_{h^{-1}}$.

Analogously, one can define a partial action of $G$ acting on a locally compact space $X$: just replace the ideals $D_g$ by open sets $X_g\subseteq X$ and the $*$-isomorphisms $\alpha_g$ by homeomorphisms $\theta_g: X_{g^{-1}}\rightarrow X_g$.

We call the triples $(\alpha,G,A)$ or $(\theta,G,X)$ partial dynamical systems, or partial actions when there is no possibility of misunderstanding.
\begin{example}\em\label{Expa1} If $\theta$ is a partial action of $G$ on the locally compact space $X$ with $\theta_g: X_{g^{-1}}\rightarrow X_g$, one can easily construct a partial action of $G$ on the C$^*$-algebra $C_0(X)$ considering $D_g=C_0(X_g)$ and
\begin{equation*}
\begin{split}
\alpha_g:C_0(X_{g^{-1}})&\rightarrow C_0(X_g)\\
                       f&\mapsto f\circ\theta_{g^{-1}}.
\end{split}
\end{equation*}
\end{example}
\begin{flushright}

  $\square$

  \end{flushright}

Now we want to define partial crossed products. There are three ways to realize them: one using Fell bundles (and we recommend \cite{ExelFell}), another using enveloping C$^*$-algebras (for details and some interesting examples look at Section 2 of \cite{Mc}) and the last one as a universal object with respect to covariant pairs (see Section 3 of \cite{QuRa}). We use the last way in our proofs and therefore we present it.

Let us define first a particular set of representations called partial representations.

\begin{definition}\label{pcpdefi2}A partial representation $\pi$ of a group $G$ into a unital C$^*$-algebra $B$ is a map $\pi: G\rightarrow B$ satisfying
\begin{itemize}
  \item[(PR1)] $\pi(e)=1$;
  \item[(PR2)] $\pi(g^{-1})=\pi(g)^*$;
  \item[(PR3)] $\pi(g)\pi(h)\pi(h^{-1})=\pi(gh)\pi(h^{-1})$.
\end{itemize}
\end{definition}

Then the partial group crossed product $A\rtimes_{\alpha}G$ is defined as the universal object with respect to a covariant pair $(\upsilon,\pi)$, which means a $*$-homomorphism ($B$ being a unital C$^*$-algebra)
$$
\upsilon: A\rightarrow B
$$

and a partial representation of $G$
$$
\pi: G\rightarrow B
$$

satisfying
\begin{equation*}
\begin{split}
\upsilon(\alpha_g(x))&=\pi(g)\upsilon(x)\pi(g^{-1})\hbox{ for }x\in D_{g^{-1}},\\
\upsilon(x)\pi(g)\pi(g^{-1})&=\pi(g)\pi(g^{-1})\upsilon(x)\hbox{ for }x\in A.
\end{split}
\end{equation*}

To define a partial group algebra, consider the set $[G]:=\{[g]:\;g\in G\}$ (without any operations).

\begin{definition}\label{defipagr}The partial group algebra of $G$, denoted $C^*_p(G)$, is the universal C$^*$-algebra generated by $[G]$ with respect to the relations
\begin{itemize}
  \item[(R$_p$1)] $[e]=1$;
  \item[(R$_p$2)] $[g^{-1}]=[g]^*$;
  \item[(R$_p$3)] $[g][h][h^{-1}]=[gh][h^{-1}]$.
\end{itemize}
\end{definition}

The C$^*$-algebra $C_p^*(G)$ is universal with respect to partial representations of $G$ (note the equivalence between relations (R$_p$) and (PR) of Definition \ref{pcpdefi2}).

In fact, one can define partial group algebras for more restricted situations, i.e., requiring that $[G]$ satisfies additional relations than the 3 relations above. Let us set $e_g:=[g][g^{-1}]$ and for our constructions consider $\mathcal{R}$ a set of (extra) relations on $[G]$ such that every relation is of the form
\begin{equation}\label{relationspga}
\sum_i\prod_je_{g_{ij}}=0.
\end{equation}

\begin{definition}The partial group algebra of $G$ with relations $\mathcal{R}$, denoted $C_p^*(G,\mathcal{R})$, is defined to be the universal C$^*$-algebra generated by $[G]$ with relations $R_p\cup\,\mathcal{R}$. This C$^*$-algebra is universal with respect to partial representations which satisfy $\mathcal{R}$.
\end{definition}

An interesting fact is that the class of partial group algebras without restrictions and of the ones with extra relations of the type (\ref{relationspga}) is contained in the class of partial crossed products (Definition 6.4 of \cite{Exel1} and Theorem 4.4 of \cite{ExLaQu} respectively). In our case the C$^*$-algebra $\mathds{U}[\varphi]$ will be isomorphic to a partial group algebra with additional relations of the form (\ref{relationspga}) above, and we will show how these can be viewed as partial crossed products.

Consider the power set $\mathcal{P}(G)$ (of $G$) with the topology given by identifying it with the compact set $\{0,1\}^G$, and denote $X_G$ the subset of $\mathcal{P}(G)$ of the subsets $\xi$ of $G$ which contain $e\in G$. Note that using the product topology of $\{0,1\}^G$ implies that $X_G$ is compact and Hausdorff.

Denote by $1_g$ the following function in $C(X_G)$:
$$
1_g(\xi)=\left\{
  \begin{array}{ll}
    1, & \hbox{if }g\in\xi; \\
    0, & \hbox{otherwise.}
  \end{array}
\right.
$$

Denote $\widehat{\mathcal{R}}$ the subset of $C(X_G)$ given by the functions $\sum_i\prod_j1_{g_{ij}}$ where the relation  $\sum_i\prod_je_{g_{ij}}=0$ is in $\mathcal{R}$. The \emph{spectrum} of the relations $\mathcal{R}$ is defined to be the compact (Proposition 4.1 \cite{ExLaQu}) space
$$
\Omega_\mathcal{R}:=\{\xi\in X_G:\; f(g^{-1}\xi)=0,\;\forall\; f\in\widehat{\mathcal{R}},\;\forall\; g\in\xi\}.
$$

Now for $g\in G$, consider
$$
\Omega_g:=\{\xi\in\Omega_\mathcal{R}:\; g\in\xi\}
$$

and let us define
\begin{equation*}
\begin{split}
\theta_g: \Omega_{g^{-1}}&\rightarrow\Omega_g\\
                      \xi&\mapsto g\xi.
\end{split}
\end{equation*}

Then we have defined a partial action $\theta$ of $G$ on $\Omega_\mathcal{R}$. Turning this partial action (as in Example \ref{Expa1}) into a partial action $\alpha$ of $G$ on $C(\Omega_\mathcal{R})$, it is well known (by Theorem 4.4 (iii) in \cite{ExLaQu}) that
\begin{equation}\label{parcpc14}
C_p^*(G,R)\cong C(\Omega_\mathcal{R})\rtimes_{\alpha}G.
\end{equation}

Now let us find a partial group C$^*$-algebra description of $\mathds{U}[\varphi]$. Therefore recall the set $\overline{S}=\mathds{G}\rtimes_{\overline{\varphi}}\mathds{Z}$ whose elements will be denoted by $(g_i,n)$ with $g_i\in G_i\subseteq\mathds{G}$. In case $g\in G=G_0\subseteq\mathds{G}$ we will use the notation $(g,n)$.

Consider the following relations $\mathcal{R}$:
\begin{itemize}
  \item[($\mathcal{R}_1$)] $[(g,0)][(g,0)^{-1}]=1,\;\forall\; g\in G$;
  \item[($\mathcal{R}_2$)] $[(e,-n)][(e,-n)^{-1}]=1\;\forall\; n\in\mathds{N}$;
  \item[($\mathcal{R}_3$)] $\D\sum_{g\in\frac{G}{\varphi^n(G)}}[(g,n)][(g,n)^{-1}]=1,\;\forall\; n\in\mathds{N}$.
\end{itemize}

Consider also the partial group algebra relations in this case i.e, on the group $\overline{S}$:
\begin{itemize}
  \item[(R$_p$1)] $[(e,0)]=1$;
  \item[(R$_p$2)] $[(g_i,n)^{-1}]=[(g_i,n)]^*,\;\forall\; n\in\mathds{Z},\;\forall\; g_i\in\mathds{G}$;
  \item[(R$_p$3)] $[(g_i,n)][(h_j,m)][(h_j,m)^{-1}]=[(g_i\varphi^n(h_j),n+m)][(h_j,m)^{-1}],\nl\;\forall\; m,n\in\mathds{Z},\;\forall\; g_i,h_j\in\mathds{G}$.
\end{itemize}

Define
\begin{equation*}
\begin{split}
\pi: \overline{S}&\rightarrow\mathds{U}[\varphi]\\
          (g_i,n)&\mapsto {s^*}^iu_gs^{n+i},
\end{split}
\end{equation*}

remembering that we can always suppose $i\geq |\,n|$. Note that when $g\in G$, $\pi(g,n)=u_gs^n$.

\begin{proposition}The map $\pi$ is a partial representation of $\overline{S}$ which satisfies the relations $\mathcal{R}$.
\end{proposition}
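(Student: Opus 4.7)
The plan is to verify the three partial representation axioms (PR1)--(PR3) of Definition \ref{pcpdefi2} together with the three extra relations $(\mathcal{R}_1)$, $(\mathcal{R}_2)$, $(\mathcal{R}_3)$. Everything will follow from the defining relations of $\mathds{U}[\varphi]$, namely $u_gu_h = u_{gh}$, the covariance $su_g = u_{\varphi(g)}s$, and the saturation $\sum_{g\in G/\varphi(G)}u_gss^*u_{g^{-1}} = 1$ together with its iterates for $\varphi^n$.

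First I would check that $\pi$ is well-defined on the inductive-limit presentation of $\overline{S}$: the element $(g_i,n)$ is the same as $((\varphi^j(g))_{i+j},n)$ for every $j\geq 0$, and the two prescriptions match because $s^ju_g = u_{\varphi^j(g)}s^j$ together with $s^*s = 1$ yields
\[
{s^*}^{i+j}u_{\varphi^j(g)}s^{n+i+j} = {s^*}^i({s^*}^js^j)u_gs^{n+i} = {s^*}^iu_gs^{n+i}.
\]

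The easy axioms fall first. (PR1) is immediate since $\pi(e,0) = u_e = 1$. For (PR2) I would unpack the inverse $(g_i,n)^{-1}$ in the semidirect product $\overline{S}$ and, choosing a representative with index at least $|n|$, use the covariance once more to obtain $\pi((g_i,n)^{-1}) = {s^*}^{n+i}u_{g^{-1}}s^{i}$, which is exactly the adjoint of ${s^*}^iu_gs^{n+i} = \pi(g_i,n)$.

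The bulk of the work is (PR3). After raising all representatives to a common high enough level (so that $i = j$ and every exponent of $s$ or $s^*$ occurring below is non-negative), I would expand both sides of
\[
\pi(g_i,n)\,\pi(h_j,m)\,\pi((h_j,m)^{-1}) \;=\; \pi\bigl((g_i,n)(h_j,m)\bigr)\,\pi((h_j,m)^{-1})
\]
into products of $u$'s, $s$'s and $s^*$'s, and then push each $s^k$ across the adjacent $u_h$ via $s^ku_h = u_{\varphi^k(h)}s^k$. The projection $\pi(h_j,m)\pi((h_j,m)^{-1}) = {s^*}^ju_hs^{m+j}{s^*}^{m+j}u_{h^{-1}}s^j$ occurs on both sides and absorbs exactly the $s^\ell{s^*}^\ell$--type defect produced by $s$ being only an isometry; this is precisely what turns the factorwise product of $\pi(g_i,n)$ and $\pi(h_j,m)$ into $\pi$ of the semidirect-product product $(g_i\overline{\varphi}^n(h_j),n+m)$. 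The main obstacle here is nothing more than careful combinatorial bookkeeping of indices and of which intermediate projections $s^\ell{s^*}^\ell$ land where.

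Finally the three extra relations are immediate: $(\mathcal{R}_1)$ is $u_gu_{g^{-1}} = 1$; $(\mathcal{R}_2)$ amounts to $\pi(e_n,-n)\pi(e_n,-n)^* = {s^*}^ns^n = 1$, where $(e,-n)$ is represented at index $i = n$ so that $\pi(e_n,-n) = {s^*}^n$; and $(\mathcal{R}_3)$ is exactly the iterated saturation $\sum_{g\in G/\varphi^n(G)}u_gs^n{s^*}^nu_{g^{-1}} = 1$ already derived in Section~2 right after Definition \ref{defi1}.
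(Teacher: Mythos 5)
Your proposal is correct and follows essentially the same route as the paper: a direct verification of (PR1)--(PR3) and $(\mathcal{R}_1)$--$(\mathcal{R}_3)$ from the defining relations of $\mathds{U}[\varphi]$, with the covariance relation $su_g=u_{\varphi(g)}s$ and the commutation of the projections $u_hs^{k}{s^*}^{k}u_{h^{-1}}$ and $s^j{s^*}^j$ (both lying in the commutative algebra $C^*(P)$) doing exactly the work you describe in (PR3). Your preliminary check that $\pi$ is well defined on the inductive-limit presentation of $\overline{S}$ is a point the paper leaves implicit; otherwise the arguments coincide.
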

\begin{proof} First we prove that $\pi$ is a partial representation of $\overline{S}$.

(R$_p$1): $\pi((e,0))=u_e=1$;

(R$_p$2): \begin{equation*}
\begin{split}
\pi((g_i,n)^{-1})&=\pi((g^{-1}_{i+n},-n))={s^*}^{i+n}u_{g^{-1}}s^i=({s^*}^iu_gs^{i+n})^*\\
&=(\pi((g_i,n)))^*;
\end{split}
\end{equation*}

(R$_p$3): \begin{equation*}
\begin{split}
&\pi((\varphi^j(g)\overline{\varphi}^{i+n}(h)_{i+j},n+m))\pi((h_j,m)^{-1})\\
&={s^*}^{i+j}u_{\varphi^j(g)\varphi^{i+n}(h)}s^{i+j+n+m}{s^*}^{j+m}u_{h^{-1}}s^j\\
&={s^*}^iu_g{s^*}^js^{i+n}\underbrace{u_hs^{j+m}{s^*}^{j+m}u_{h^{-1}}}\underbrace{s^j{s^*}^j}s^j\\
&={s^*}^iu_g{s^*}^js^{i+n}s^j{s^*}^ju_hs^{j+m}{s^*}^{j+m}u_{h^{-1}}s^j\\
&={s^*}^iu_gs^{i+n}{s^*}^ju_hs^{j+m}{s^*}^{j+m}u_{h^{-1}}s^j\\
&=\pi((g_i,n))\pi((h_j,m))\pi((h_j,m)^{-1}).
\end{split}
\end{equation*}

Now we show that $\pi$ satisfies the extra relations $\mathcal{R}$.

($\mathcal{R}_1$): $\pi((g,0))\pi((g,0)^{-1})=u_e=1$;

($\mathcal{R}_2$): $\pi((e,-n))\pi((e,-n)^{-1})=\pi((e_n,-n))\pi((e_n,-n)^{-1})={s^*}^ns^n=1$;

($\mathcal{R}_3$): $\D\sum_{g\in\frac{G}{\varphi^n(G)}}\pi((g,n))\pi((g,n)^{-1})=\D\sum_{g\in\frac{G}{\varphi^n(G)}}u_gs^n{s^*}^{-n}u_{g^{-1}}=1$.
\end{proof}

It follows from the universality of the partial group algebra $C_p^*(\overline{S},\mathcal{R})$ that there exists a $*$-homomorphism
\begin{equation}\label{eqpga1}
\begin{split}
\Phi: C^*_p(\overline{S},\mathcal{R})&\rightarrow\mathds{U}[\varphi]\\
                            [(g_i,n)]&\mapsto {s^*}^iu_gs^{n+i}.
\end{split}
\end{equation}

Let us find an inverse for $\Phi$ by using the relations which define $\mathds{U}[\varphi]$.

\begin{proposition}The (obviously) unitary elements $[(g,0)]$ and isometries $[(e,n)]$ of $C_p^*(\overline{S},\mathcal{R})$ satisfy the relations which define $\mathds{U}[\varphi]$.
\end{proposition}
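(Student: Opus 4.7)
I would set $u_g := [(g,0)]$ for $g \in G$ and $s := [(e,1)]$, then verify by direct computation that these satisfy the three defining relations (i)--(iii) of Definition~\ref{defi1}. The only tools needed are the partial representation axioms (R$_p$1)--(R$_p$3) together with the extra relations ($\mathcal{R}_1$)--($\mathcal{R}_3$); each verification reduces to recognizing a product in $C_p^*(\overline{S},\mathcal{R})$ as a single generator via (R$_p$3) and then cancelling with $\mathcal{R}_1$ or $\mathcal{R}_2$. As a preliminary, an easy induction using (R$_p$3) at $(e,1)\cdot(e,k)$ together with $\mathcal{R}_2$ at $n=1$ shows that $[(e,n)] = s^n$ for every $n\geq 1$, so the ``isometries $[(e,n)]$'' in the statement are indeed all powers of the single $s$.

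\textbf{Unitarity of $u_g$ and the isometry property of $s$.} Since $(g,0)^{-1}=(g^{-1},0)$ in $\overline{S}$, axiom (R$_p$2) gives $u_g^* = u_{g^{-1}}$. Thus $\mathcal{R}_1$ reads $u_g u_g^* = 1$, and applying $\mathcal{R}_1$ to $g^{-1}$ gives $u_g^* u_g = u_{g^{-1}}u_{g^{-1}}^* = 1$. For $s$, (R$_p$2) identifies $s^* = [(e,1)^{-1}] = [(e,-1)]$, so $\mathcal{R}_2$ at $n=1$ yields $s^* s = [(e,-1)][(e,1)] = 1$.

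\textbf{Relations (i) and (ii).} For (i), (R$_p$3) gives $u_g u_h u_h^* = [(gh,0)] u_h^* = u_{gh} u_h^*$, and multiplying on the right by $u_h$ (using unitarity) yields $u_g u_h = u_{gh}$. For (ii), I would apply (R$_p$3) to both sides and show that each reduces to the single generator $[(\varphi(g),1)]$: on the left, (R$_p$3) at $(e,1)\cdot(g,0)$ gives $s u_g u_g^* = [(\varphi(g),1)] u_g^*$, and right-multiplication by $u_g$ gives $s u_g = [(\varphi(g),1)]$; on the right, (R$_p$3) at $(\varphi(g),0)\cdot(e,1)$ gives $u_{\varphi(g)} s s^* = [(\varphi(g),1)] s^*$, and right-multiplication by $s$ (using $s^*s=1$) gives $u_{\varphi(g)} s = [(\varphi(g),1)]$.

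\textbf{Relation (iii) and the main obstacle.} The same argument as for (ii) shows $u_g s = [(g,1)]$ for every $g \in G$, whence by (R$_p$2),
\[
u_g s s^* u_{g^{-1}} = [(g,1)]\,[(g,1)]^* = [(g,1)]\,[(g,1)^{-1}].
\]
Summing over $g \in G/\varphi(G)$ and applying $\mathcal{R}_3$ at $n=1$ gives $1$, establishing (iii). The only real difficulty here is bookkeeping: one has to track how $\overline{\varphi}$ restricts to the embedded copy $G_0 \subseteq \mathbb{G}$ (where it coincides with $\varphi$), so that applications of (R$_p$3) produce generators of the form $[(g,n)]$ with $g \in G = G_0$ rather than some $[(g_i,n)]$ with $i > 0$. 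This is immediate from the construction of $\mathbb{G}$ and does not obstruct the verifications; hence the map $\Phi$ of~(\ref{eqpga1}) will admit an inverse sending $u_g \mapsto [(g,0)]$ and $s \mapsto [(e,1)]$.
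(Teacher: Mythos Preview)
Your proposal is correct and follows essentially the same route as the paper: both arguments first use $\mathcal{R}_1$ and $\mathcal{R}_2$ to obtain unitarity of $[(g,0)]$ and the isometry property of $[(e,1)]$, and then verify (i)--(iii) by inserting the resulting identities so that (R$_p$3) can collapse products to single generators (in particular reducing both sides of (ii) to $[(\varphi(g),1)]$ and reducing $u_gss^*u_{g^{-1}}$ to $e_{(g,1)}$ for (iii)). Your presentation is slightly more streamlined, and you make explicit the auxiliary fact $[(e,n)]=s^n$ that the paper leaves implicit; one tiny quibble is that the induction you sketch actually uses $\mathcal{R}_2$ at the running index $k$ (or alternatively (R$_p$3) at $(e,k)\cdot(e,1)$ with $\mathcal{R}_2$ at $n=1$), but this is immaterial since $\mathcal{R}_2$ is assumed for all $n$.
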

\begin{proof} Let us show that the elements above satisfy the relations (i) - (iii) of Definition \ref{defi1}.

(i):\begin{equation*}
\begin{split}
[(g,0)][(h,0)]&=[(g,0)][(h,0)][(h^{-1},0)][(h,0)]=[(gh,0)][(h^{-1},0)][(h,0)]\\
&=[(gh,0)];
\end{split}
\end{equation*}

(ii):\begin{equation*}
\begin{split}
[(e,1)][(g,0)]&=[(e,1)][(g,0)][(g^{-1},0)][(g,0)]=[(\varphi(g),1)][(g^{-1},0)][(g,0)]\\
&=[(\varphi(g),1)]=[(\varphi(g),1)][(e,-1)][(e,1)]\\
&=[(\varphi(g),0)][(e,1)][(e,-1)][(e,1)]\\
&=[(\varphi(g),0)][(e,1)];
\end{split}
\end{equation*}

(iii):
\begin{equation*}
\begin{split}
[(g,0)][(e,1)][(e,-1)][(g^{-1},0)]&=[(g,1)][(e,-1)][(g^{-1},0)]\\
&=[(g,1)][(e,-1)][(g^{-1},0)][(g,0)][(g^{-1},0)]\\
&=[(g,1)][(g^{-1}_1,-1)][(g,0)][(g^{-1},0)]\\
&=[(g,1)][(g^{-1}_1,-1)]=[(g,1)][(g,1)]^*,
\end{split}
\end{equation*}

and using $\mathcal{R}_3$ we see that it satisfies condition (iii).
\end{proof}

Consequently we have a $*$-homomorphism
\begin{equation}\label{eqpga2}
\begin{split}
\Psi: \mathds{U}[\varphi]&\rightarrow C^*_p(\overline{S},\mathcal{R})\\
                      u_g&\mapsto [(g,0)]\\
                      s^n&\mapsto [(e,n)].
\end{split}
\end{equation}

\begin{theorem}\label{teouppga} The C$^*$-algebra $\mathds{U}[\varphi]$ is isomorphic to $C^*_p(\overline{S},\mathcal{R})$.
\end{theorem}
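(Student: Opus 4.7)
The plan is to show that the $*$-homomorphisms $\Phi$ of (\ref{eqpga1}) and $\Psi$ of (\ref{eqpga2}) are mutually inverse. Both have already been constructed via the universal properties of $C_p^*(\overline{S}, \mathcal{R})$ and of $\mathds{U}[\varphi]$, so it suffices to verify $\Phi \circ \Psi = \mathrm{id}$ and $\Psi \circ \Phi = \mathrm{id}$ on a set of generators of each algebra.

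The direction $\Phi \circ \Psi = \mathrm{id}_{\mathds{U}[\varphi]}$ is immediate: $\Phi(\Psi(u_g)) = \Phi([(g,0)]) = {s^*}^0 u_g s^{0} = u_g$ and $\Phi(\Psi(s)) = \Phi([(e,1)]) = {s^*}^0 u_e s^{1} = s$, so the composition agrees with the identity on $\{u_g : g \in G\} \cup \{s\}$, which generates $\mathds{U}[\varphi]$. For the other direction, on a generator $[(g_i, n)]$ with $i \geq |n|$, the computation reads
$$
\Psi(\Phi([(g_i, n)])) = \Psi({s^*}^i u_g s^{n+i}) = [(e,-i)][(g,0)][(e,n+i)],
$$
and the task reduces to proving that this triple product equals $[(g_i, n)]$ inside $C_p^*(\overline{S}, \mathcal{R})$. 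My plan is to collapse the product two factors at a time. First, applying (R$_p$3) with the pair $(e,-i), (g,0)$ gives $[(e,-i)][(g,0)][(g,0)^{-1}] = [(g_i,-i)][(g,0)^{-1}]$; the unitarity of $[(g,0)]$ furnished by $\mathcal{R}_1$ then yields $[(e,-i)][(g,0)] = [(g_i,-i)]$. Second, applying (R$_p$3) with $(g_i,-i), (e,n+i)$ and right-multiplying by $[(e,n+i)]$ converts both sides, via $\mathcal{R}_2$ in the form $[(e,-(n+i))][(e,n+i)] = 1$ (valid because $i \geq |n|$ forces $n+i \geq 0$), into $[(g_i,-i)][(e,n+i)] = [(g_i, n)]$. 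Chaining the two identities gives the required equality.

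The main obstacle is conceptual rather than computational: partial representations do not satisfy $\pi(a)\pi(b) = \pi(ab)$ in general, so one cannot naively telescope the triple product along the group law of $\overline{S}$. What makes the argument go through is that the two ``inner'' factors are genuinely unitary or isometric thanks to the extra relations, namely $[(g,0)]$ by $\mathcal{R}_1$ and $[(e,n+i)]$ by $\mathcal{R}_2$; these are exactly the ingredients that let one apply (R$_p$3) and then cancel the residual $[h^{-1}]$ factor. The hypothesis $i \geq |n|$, built into the definition of $\Phi$, is essential precisely to ensure $\mathcal{R}_2$ is applicable in the second cancellation step. Once these two collapses are available, the composition is the identity on each generator $[(g_i,n)]$ and the theorem follows.
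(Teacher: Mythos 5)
Your proposal is correct and follows essentially the same route as the paper: both verify that $\Phi$ and $\Psi$ are mutually inverse on the generators, the only real content being the collapse of $[(e,-i)][(g,0)][(e,n+i)]$ to $[(g_i,n)]$ using (R$_p$3) together with $\mathcal{R}_1$ and $\mathcal{R}_2$, with the hypothesis $i\geq|n|$ guaranteeing $n+i\geq 0$. The paper carries out this collapse by inserting $[(e,n+i)][(e,n+i)]^*[(e,n+i)]$ and regrouping, whereas you collapse pairwise from the left, but this is only a cosmetic difference in the algebra.
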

\begin{proof} We just have to show that the $*$-homomorphisms (\ref{eqpga1}) and (\ref{eqpga2}) are inverses of each other on the generators of the respective C$^*$-algebras.

$\,\;\;\;\;\;\;\bullet\;\;\Phi\circ\Psi(u_g)=\Phi([(g,0)])=u_g$;

$\,\;\;\;\;\;\;\bullet\;\;\Phi\circ\Psi(s^n)=\Phi([(e,n)])=s^n$;
\vspace{0.3cm}\begin{equation*}
\begin{split}
\bullet\;\;\Psi\circ\Phi([(g_i,n)])&=\Psi({s^*}^iu_gs^{n+i})=[(e,-i)][(g,0)][(e,n+i)]\\
&=[(e,-i)][(g,0)][(e,n+i)][(e,-n-i)][(e,n+i)]\\
&=[(e,-i)][(g,n+i)][(e,-n-i)][(e,n+i)]\\
&=[(e,-i)][(e,i)][(e,-i)][(g,n+i)]\\
&=[(e,-i)][(e,i)][(\overline{\varphi}^{-i}(g),n)]\\
&=[(g_i,n)].
\end{split}
\end{equation*}
\end{proof}

In order to define a partial crossed product isomorphic to $C_p^*(\overline{S},\mathcal{R})$ which by the theorem above is isomorphic to $\mathds{U}[\varphi]$, consider $X_{\overline{S}}$ the subset of $\mathcal{P}(\overline{S})$ of the subsets $\xi$ of $\overline{S}$ which contain $(e,0)\in \overline{S}$. Also $1_s\in C(X_{\overline{S}})$ is given by
$$
1_s(\xi)=\left\{
           \begin{array}{ll}
             1, & s\in\xi; \\
             0, & \hbox{otherwise.}
           \end{array}
         \right.
$$

and the partial group algebra relations $\mathcal{R}$ are
\begin{itemize}
  \item[($\mathcal{R}_1$)] $e_{(g,\,0)}-1=0,\;\forall\; g\in G$;
  \item[($\mathcal{R}_2$)] $e_{(e,-n)}-1=0\;\forall\; n\in\mathds{N}$;
  \item[($\mathcal{R}_3$)] $\D\sum_{g\in\frac{G}{\varphi^n(G)}}e_{(g,n)}-1=0,\;\forall\; n\in\mathds{N}$.
\end{itemize}

This implies that $\widehat{\mathcal{R}}$ is the subset of $C(X_{\overline{S}})$ consisting of the functions
\begin{itemize}
  \item[($\widehat{\mathcal{R}}_1$)] $1_{(g,\,0)}-1_{(e,\,0)},\;\forall\; g\in G$;
  \item[($\widehat{\mathcal{R}}_2$)] $1_{(e,-n)}-1_{(e,\,0)}\;\forall\; n\in\mathds{N}$;
  \item[($\widehat{\mathcal{R}}_3$)] $\D\sum_{g\in\frac{G}{\varphi^n(G)}}1_{(g,n)}-1_{(e,\,0)},\;\forall\; n\in\mathds{N}$.
\end{itemize}

The spectrum of the relations $\mathcal{R}$ is defined to be
$$
\Omega_\mathcal{R}=\{\xi\in X_{\overline{S}}:\; f(g^{-1}\xi)=0,\;\forall\; f\in\widehat{\mathcal{R}},\;\forall\; g\in\xi\}.
$$

Consider
$$
\Omega_s=\{\xi\in\Omega_\mathcal{R}:\; s\in\xi\}
$$

and define the partial action $\varpi$ of $\overline{S}$ on $\Omega_\mathcal{R}$ by
\begin{equation}\label{isouapcp}
\begin{split}
\varpi_s: \Omega_{s^{-1}}&\rightarrow\Omega_s\\
                      \xi&\mapsto s\xi.
\end{split}
\end{equation}

Then it is well known by Theorem \ref{teouppga} and (\ref{parcpc14}) respectively that
\begin{equation}\label{isouapcp15}
\mathds{U}[\varphi]\cong C^*_p(\overline{S},\mathcal{R})\cong C(\Omega_\mathcal{R})\rtimes_{\alpha}\overline{S},
\end{equation}

where
\begin{equation}\label{isouapcp16}
\begin{split}
\alpha_s: C(\Omega_{s^{-1}})&\rightarrow C(\Omega_s)\\
                           f&\mapsto f\circ\varpi_{s^{-1}}.
\end{split}
\end{equation}

The partial crossed product description of $\mathds{U}[\varphi]$ presented above together with the requirement that $G$ is amenable (which implies that $\overline{S}$ is as well) makes it possible to define a certain conditional expectation as done in \cite{ExelFell} Proposition 2.9 (as in the classical group crossed product construction the amenability of the group implies the isomorphism of both reduced and full constructions by \cite{Mc}, and a faithful conditional expectation exists for the reduced one). We will show that this conditional expectation is the same - modulo the isomorphism already established - as $\epsilon$ as given by Proposition \ref{prop1}. The conditional expectation of $C(\Omega_\mathcal{R})\rtimes_{\alpha}\overline{S}$ is given by
\begin{equation*}
\begin{split}
\overline{E}: C(\Omega_\mathcal{R})\rtimes_{\alpha}\overline{S}&\rightarrow C(\Omega_\mathcal{R})\\
                                           f\delta_s           &\mapsto \left\{
                                    \begin{array}{ll}
                                      f, & \hbox{if }s=(e,0); \\
                                      0, & \hbox{otherwise.}
                                    \end{array}
                                  \right.
\end{split}
\end{equation*}
Identifying $C^*_p(\overline{S},\mathcal{R})$ with $C(\Omega_\mathcal{R})\rtimes_{\alpha}\overline{S}$,
$\overline{E}$ becomes
\begin{equation*}
\begin{split}
E: C^*_p(\overline{S},\mathcal{R})&\rightarrow C^*(e_{(g_i,n)})\footnotemark\\
                          \D\prod_{(g_i,n)\in\overline{S}}^{\scriptscriptstyle{\hbox{finite}}}[(g_i,n)]&\mapsto \left\{
                                    \begin{array}{ll}
                                      \D\prod_{(g_i,n)\in\overline{S}}^{\scriptscriptstyle{\hbox{finite}}}[(g_i,n)], & \hbox{if }\D\prod_{(g_i,n)\in\overline{S}}^{\scriptscriptstyle{\hbox{finite}}}(g_i,n)=(e,0); \\
                                      0, & \hbox{otherwise.}
                                    \end{array}
                                  \right.
\end{split}
\end{equation*}\footnotetext{$e_{(g_i,n)}:=[(g_i,n)][(g_i,n)^{-1}]$ with $(g_i,n)\in\overline{S}=\mathds{G}\rtimes_{\overline{\varphi}}\mathds{Z}$}

Using the isomorphism $\Psi$ (from (\ref{eqpga2})) and $\epsilon$ (from Proposition \ref{prop1}), we shall prove the following.
\begin{proposition} $E\circ\Psi=\Psi\circ\epsilon$.
\end{proposition}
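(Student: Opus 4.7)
My plan is to reduce the identity $E\circ\Psi=\Psi\circ\epsilon$ to a check on a dense $*$-subalgebra and then invoke continuity. Since $\epsilon$ and $E$ are bounded conditional expectations and $\Psi$ is an isomorphism of C$^*$-algebras, it suffices to verify $E(\Psi(x))=\Psi(\epsilon(x))$ on $\operatorname{span}(Q)$, the $*$-subalgebra already used in Section 4, whose typical generator has the form
$$
x={s^*}^n u_{h^{-1}} f u_{h'} s^m, \qquad f=u_g s^k {s^*}^k u_{g^{-1}}\in P.
$$

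Next I would compute $\Psi(x)$ explicitly, using $\Psi(u_g)=[(g,0)]$, $\Psi(s^n)=[(e,n)]$ and (by $*$-preservation together with (R$_p$2)) $\Psi({s^*}^n)=[(e,-n)]$. This yields a finite product of eight generators
$$
\Psi(x) = [(e,-n)][(h^{-1},0)][(g,0)][(e,k)][(e,-k)][(g^{-1},0)][(h',0)][(e,m)]
$$
in $C_p^*(\overline{S},\mathcal{R})$. Under the identification \eqref{parcpc14}, $E$ maps such a word to itself when the product of the eight tuples in $\overline{S}=\mathds{G}\rtimes_{\overline{\varphi}}\mathds{Z}$ equals $(e,0)$, and to zero otherwise. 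The whole proof therefore collapses to a single group computation in $\overline{S}$ using the rule $(a,p)(b,q)=(a\cdot\overline{\varphi}^p(b),\,p+q)$.

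The main obstacle will be the bookkeeping of levels in the direct limit $\mathds{G}$, since each partial product picks up a factor of the form $\overline{\varphi}^{-n}(h^{-1})$, $\overline{\varphi}^{-n}(g)$, \emph{etc.}, and these must be brought to a common level before cancellations can be read off. The crucial observation that tames this is that at any fixed level the group operation in $\mathds{G}$ is componentwise on representatives in $G$, so the $g$ and $g^{-1}$ coming from $f$ cancel regardless of the intermediate $\overline{\varphi}$-shifts, while the central pair $(e,k)(e,-k)$ contributes trivially to the first coordinate. I expect the simplified product to come out to
$$
\tau = \bigl((h^{-1}h')_n,\; m-n\bigr)\in\overline{S}.
$$

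Reading off the condition $\tau=(e,0)$: the second coordinate forces $m=n$, after which $(h^{-1}h')_n=e$ in $\mathds{G}$ is equivalent, by injectivity of $\varphi$, to $h=h'$. These are precisely the conditions under which $\epsilon(x)=x$ rather than $0$ in Proposition \ref{prop1}. When they hold, $\Psi(\epsilon(x))=\Psi(x)=E(\Psi(x))$; otherwise both sides vanish. This establishes the equality on $\operatorname{span}(Q)$, and continuity of the three maps extends it to all of $\mathds{U}[\varphi]$.
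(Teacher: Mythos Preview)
Your proposal is correct and follows essentially the same approach as the paper: both reduce to the spanning set $\operatorname{span}(Q)$, compute $\Psi(x)$ as the same eight-letter word in $C_p^*(\overline{S},\mathcal{R})$, and observe that $E$ keeps this word exactly when $n=m$ and $h=h'$. The only difference is that you make the group computation in $\overline{S}$ explicit (arriving at $((h^{-1}h')_n,\,m-n)$), whereas the paper simply records the outcome as the factor $\delta_{n,m}\delta_{h,h'}$ without writing out the intermediate product.
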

\begin{proof} Let us prove the equality on the dense $*$-subalgebra of $\mathds{U}[\varphi]$ given by
$$
\hbox{span}(Q)=\hbox{span}(\{{s^*}^nu_{h^{-1}}fu_{h'}s^m:\;f\in P,\,h, h'\in G,\,n,m\in\mathds{N}\}).
$$

Consider $f=u_gs^k{s^*}^ku_{g^{-1}}\in P$, $h, h'\in G$, and $n,m\in\mathds{N}$.
\begin{equation*}
\begin{split}
&E\circ\Psi({s^*}^nu_{h^{-1}}fu_{h'}s^m)=E\circ\Psi({s^*}^nu_{h^{-1}}u_gs^k{s^*}^ku_{g^{-1}}u_{h'}s^m)\\
&= E([(e,-n)][(h^{-1},0)][(g,0)][(e,k)][(e,-k)][(g^{-1},0)][(h',0)][(e,m)])\\
&=\delta_{n,m}\delta_{h,h'}[(e,-n)][(h^{-1},0)][(g,0)][(e,k)][(e,-k)][(g^{-1},0)][(h,0)][(e,n)]\\
&=\delta_{n,m}\delta_{h,h'}[(e,-n)][(h^{-1},0)]\Psi(f)[(h,0)][(e,n)],
\end{split}
\end{equation*}

while
\begin{equation*}
\begin{split}
\Psi\circ\epsilon({s^*}^nu_{h^{-1}}fu_{h'}s^m)&=\Psi(\delta_{n,m}\delta_{h,h'}{s^n}^*u_{h^{-1}}fu_hs^n)\\
&=\delta_{n,m}\delta_{h,h'}[(e,-n)][(h^{-1},0)]\Psi(f)[(h,0)][(e,n)].
\end{split}
\end{equation*}

This shows that both conditional expectations $E$ and $\epsilon$ are the same, up to the isomorphism $\Psi$.
\end{proof}

\subsection{Simplicity of $\mathds{U}[\varphi]$}

To prove that $\mathds{U}[\varphi]$ is simple using partial crossed product theory, we suppose that $G$ is commutative. Therefore our group is amenable and the endomorphism $\varphi$ is totally normal i.e, the images of $\varphi$ are normal subgroups of $G$. This implies that the set $\overline{G}$, defined in the beginning of Section \ref{sectioncpdescr}, is a group.

We need some definitions (from \cite{ExLaQu}) concerning partial actions, as they play a role in the proof that $\mathds{U}[\varphi]$ is simple. Consider $(\theta,H,X)$ a partial dynamical system where $X$ is a locally compact space with $X_h$ being the open sets (Definition \ref{pcpdefi1}).
\begin{definition}We say that a partial action $\theta$ is topologically free if for every $h\in H\backslash\{e\}$ the set $F_h:=\{x\in X_{h^{-1}}:\;\theta_h(x)=x\}$ has empty interior.
\end{definition}

In order to define the minimality of $\theta$, we adjust the classical definition of invariance: a subset $V$ of $X$ is invariant under the partial action $(\theta,H,X)$ if $\theta_h(V\cap X_{h^{-1}})\subseteq V$ $\forall\; h\in H$.
\begin{definition}The partial action $\theta$ is minimal if there are no invariant open subsets of $X$ other than $\emptyset$ and $X$.
\end{definition}

Suited to our setting, there is a result due to Exel, Laca and Quigg (Corollary 2.9 of \cite{ExLaQu}) which says that the partial action $\varpi$ defined in (\ref{isouapcp}) is topologically free and minimal if and only if $C(\Omega_\mathcal{R})\rtimes_{\alpha}(\overline{S})$, as defined in (\ref{isouapcp15}) and (\ref{isouapcp16}), is simple (in fact their result applies to the reduced crossed product, but as we are assuming $G$ is commutative and thus amenable, we know that $\overline{S}$ is amenable and this implies that both the full and reduced partial crossed products are isomorphic by \cite{Mc} Proposition 4.2), so it is clear that we have to understand the topology of $\Omega_\mathcal{R}$, which unfortunately is not an easy task.

To avoid difficulties we present a new set which is homeomorphic to $\Omega_\mathcal{R}$, and for which we can easily understand the topology. Consider $\frac{G}{\varphi^k(G)}=\{e\}$ for negative integers $k$ and for $m\leq n$ both integers the canonical projection
$$
p_{m,n}: \dfrac{G}{\varphi^n(G)}\rightarrow\dfrac{G}{\varphi^m(G)}.
$$

Using these, define
\begin{equation*}
\begin{split}
\widetilde{G}:&=\lim_{\leftarrow \atop n}\left\{\dfrac{G}{\varphi^n(G)}:\;p_{m,n}\right\}\\
             &=\left\{(g_n\overline{\varphi}^n(G))_{n\in\mathds{Z}}\in\prod_{n\in\mathds{Z}}\dfrac{G}{\varphi^n(G)}:\;p_{m,n}(g_n)=g_m,\hbox{ if }m\leq n\right\},
\end{split}
\end{equation*}

where $\overline{\varphi}$ is the extension of $\varphi$ defined after Proposition \ref{propOre}. Note that when $n\leq0$, $\frac{G}{\varphi^n(G)}=\{e\}$ and therefore for any element in $\widetilde{G}$, the entries indexed by negative integers are $e$. Moreover, when $n>0$, $\overline{\varphi}^n=\varphi^n$. Particularly it makes not necessary to carry the bar over $\varphi$ when denoting the elements of $\widetilde{G}$, and we will also use the notation $(g_m)_{n\in\mathds{Z}}\in\widetilde{G}$. One can see $G$ inside $\widetilde{G}$ through the map $g\mapsto (g\varphi^n(G))_n$, which is injective if $\varphi$ is pure.

Another fact is that the set defined above is isomorphic as a topological group to our previous defined $\overline{G}$ (beginning of Section \ref{sectioncpdescr}), because that set is exactly this one except for the negative entries of the vectors in $\widetilde{G}$, which are always $e$. Therefore $\widetilde{G}$ is compact.

Consider
\begin{equation*}
\begin{split}
    \rho:\widetilde{G}&\rightarrow \hbox{P}(\overline{S})\\
(g_n\overline{\varphi}^n(G))_{n\in\mathds{Z}}&\mapsto\{(g_n\overline{\varphi}^n(h),n):\;n\in\mathds{Z},\;h\in G\}.
\end{split}
\end{equation*}

\begin{lemma} The set $\rho(\widetilde{G})$ is contained in $\Omega_\mathcal{R}$.
\end{lemma}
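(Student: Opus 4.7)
The strategy is to verify directly the two conditions defining $\Omega_\mathcal{R}$ for the set $\xi:=\rho((g_n\varphi^n(G))_{n\in\mathds{Z}})$: namely, $(e,0)\in\xi$, so that $\xi\in X_{\overline{S}}$, and $f(s^{-1}\xi)=0$ for every $s\in\xi$ and every $f\in\widehat{\mathcal{R}}$. The first condition is immediate by taking $n=0$ and $h=g_0^{-1}$ in the definition of $\rho$.

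For the second, I would avoid computing $s^{-1}\xi$ directly and instead use the equivalence $t\in s^{-1}\xi\iff st\in\xi$. Writing a general element of $\xi$ as $s=(g_m\overline{\varphi}^m(h'),m)$, the three families of functions in $\widehat{\mathcal{R}}$ translate into the following three assertions to check: $(\widehat{\mathcal{R}}_1)$ that $s(g,0)=(g_m\overline{\varphi}^m(h'g),m)$ lies in $\xi$ for all $g\in G$, which is immediate since $h'g\in G$; $(\widehat{\mathcal{R}}_2)$ that $s(e,-n)=(g_m\overline{\varphi}^m(h'),m-n)$ lies in $\xi$ for all $n\in\mathds{N}$, i.e.\ $g_m\overline{\varphi}^m(h')\in g_{m-n}\overline{\varphi}^{m-n}(G)$; and $(\widehat{\mathcal{R}}_3)$ that, for each $n\in\mathds{N}$, exactly one $g\in G/\varphi^n(G)$ satisfies $g_m\overline{\varphi}^m(h'g)\in g_{m+n}\overline{\varphi}^{m+n}(G)$.

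Both $(\widehat{\mathcal{R}}_2)$ and $(\widehat{\mathcal{R}}_3)$ reduce, after applying $\overline{\varphi}^{\pm m}$ and absorbing $h'$, to a coset condition of the form $h'g\in a\,\varphi^n(G)$ (or the analogous condition $h'\in a\,\overline{\varphi}^{n}(G)$ for $(\widehat{\mathcal{R}}_2)$) for a suitable element $a$ that depends on the sign configuration of $m$ and $m\pm n$; such a condition plainly selects exactly one coset representative in $G/\varphi^n(G)$. The main technical obstacle---and the only genuine work---is showing that the constant $a$ really lies in $G\subseteq\mathds{G}$. This I would settle by a case split on the signs of $m$ and $m\pm n$, invoking (a) the inverse limit compatibility $g_m^{-1}g_{m+l}\in\varphi^m(G)$ whenever $0\leq m\leq m+l$, so that $\overline{\varphi}^{-m}$ returns the factor to $G$; (b) the convention $g_k=e$ for $k<0$, which trivializes the offending factor; and (c) the fact that $\overline{\varphi}^{-m}=\varphi^{|m|}$ maps $G$ into $G$ whenever $m\leq 0$. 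Together these sub-cases exhaust the possibilities and close the argument.
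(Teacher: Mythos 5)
Your proposal is correct and follows essentially the same route as the paper: both verify the defining conditions of $\Omega_\mathcal{R}$ by translating $f(s^{-1}\xi)=0$ into the membership statements $s(g,0)\in\xi$, $s(e,-n)\in\xi$, and the uniqueness of the coset $g\varphi^n(G)$ with $s(g,n)\in\xi$, resolved via the inverse-limit compatibility $g_{m}=g_{m-n}\overline{\varphi}^{m-n}(\overline{k}_1)$ and $g_{m+n}=g_m\overline{\varphi}^m(\overline{k}_2)$. Your explicit sign case-split and the check that $(e,0)\in\xi$ are just slightly more careful renderings of what the paper leaves implicit.
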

\begin{proof} Take $(g_m)_m\in\widetilde{G}$ and it is clear from the definition of $\widetilde{G}$ that $$g_m=g_{m-n}\overline{\varphi}^{m-n}(\overline{k}_1)$$ and $$g_{m+n}=g_m\overline{\varphi}^m(\overline{k}_2)$$ for $n\in\mathds{N}$ and $\overline{k}_1,\overline{k}_2\in G$.

Denote $\xi:=\rho((g_m)_m)$. We have to show that $f(g^{-1}\xi)=0$ for all $g\in\xi$ and all $f\in\widehat{\mathcal{R}}=\widehat{\mathcal{R}}_1\cup\widehat{\mathcal{R}}_2\cup\widehat{\mathcal{R}}_3$. Therefore fix $g=(g_m\overline{\varphi}^m(k),m)\in\xi$ for $m\in\mathds{Z}$ and $k\in G$.

$\bullet\; f=1_{(h,0)}-1\in\widehat{\mathcal{R}}_1$: Then $f(g^{-1}\xi)=0\Leftrightarrow g(h,0)\in\xi$, which is true because
$g(h,0)=(g_m\overline{\varphi}^m(kh),m)\in\xi$.

$\bullet\; f=1_{(e,-n)}-1\in\widehat{\mathcal{R}}_2$: Similarly $f(g^{-1}\xi)=0\Leftrightarrow g(e,-n)\in\xi$ and the latter holds as $g(e,-n)=(g_m\overline{\varphi}^m(k),m-n)=(g_{m-n}\overline{\varphi}^{m-n}(\overline{k}_1\overline{\varphi}^n(k)),m-n)\in\xi$.

$\bullet\; f=\D\sum_{h\in\frac{G}{\varphi^n(G)}}1_{(h,n)}-1\in\widehat{\mathcal{R}}_3$: Here $f(g^{-1}\xi)=0\Leftrightarrow$ there exists only one class $h\varphi^n(G)$ such that $g(h,n)\in\xi$. But
$$
g(h,n)=(g_m\overline{\varphi}^m(kh),m+n)=(g_{m+n}\overline{\varphi}^m(\overline{k}_2^{-1}kh),m+n)
$$

belongs to $\xi$ if and only if $\overline{k}_2^{-1}kh\in\overline{\varphi}^n(G)=\varphi^n(G)$ (as $n\in\mathds{N}$), which is the same as requiring $h\in k^{-1}\overline{k}_2\varphi^n(G)$, and this can be true only for one class in $\frac{G}{\varphi^n{G}}$.
\end{proof}

\begin{proposition}\label{homeorho} $\rho:\widetilde{G}\rightarrow \Omega_\mathcal{R}$ is a homeomorphism.
\end{proposition}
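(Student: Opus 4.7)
The plan is to construct an explicit inverse $\sigma:\Omega_\mathcal{R}\to\widetilde{G}$ to $\rho$, verify that both maps are continuous, and then invoke the standard fact that a continuous bijection from a compact space to a Hausdorff space is a homeomorphism. Since $\widetilde{G}$ is compact (being homeomorphic to $\overline{G}$, as noted just before the statement) and $\Omega_\mathcal{R}\subseteq\{0,1\}^{\overline{S}}$ is Hausdorff, it suffices to exhibit $\rho$ as a continuous bijection.

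\textbf{Construction of $\sigma$.} Given $\xi\in\Omega_\mathcal{R}$, apply $\widehat{\mathcal{R}}_3$ at $(e,0)\in\xi$: for each $n\geq 0$ this yields a unique class $g_n\in G/\varphi^n(G)$ with $(g_n,n)\in\xi$; for $n<0$ set $g_n=e$. Compatibility $p_{m,n}(g_n)=g_m$ is automatic when $m<0$. For $0\leq m\leq n$, apply $\widehat{\mathcal{R}}_2$ at $(g_n,n)$ to place $(g_n,m)\in\xi$; the uniqueness statement in $\widehat{\mathcal{R}}_3$ at level $m$ then forces $g_n\varphi^m(G)=g_m\varphi^m(G)$. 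Hence $\sigma(\xi):=(g_n)_{n\in\mathds{Z}}\in\widetilde{G}$.

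\textbf{Bijectivity.} For $\rho\circ\sigma=\mathrm{id}$, the inclusion $\rho(\sigma(\xi))\subseteq\xi$ follows by applying $\widehat{\mathcal{R}}_1$ at $(g_n,n)$ when $n\geq 0$, and at $(e,n)$ (itself obtained from $\widehat{\mathcal{R}}_2$) when $n<0$: in either case every element $(g_n\overline{\varphi}^n(h),n)$ of $\rho(\sigma(\xi))$ lies in $\xi$. For the reverse inclusion, take $(y,m)\in\xi$ and write $y=\overline{\varphi}^{-i}(h)$ with $h\in G$ and $i$ so large that $m+i\geq 0$. Applying $\widehat{\mathcal{R}}_3$ at $(e,-i)\in\xi$ with parameter $n=m+i$ produces a unique $h^\ast\in G/\varphi^{m+i}(G)$ with $(\overline{\varphi}^{-i}(h^\ast),m)\in\xi$. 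Since $\overline{\varphi}$ is an automorphism of $\mathds{G}$, the identity $\overline{\varphi}^{-i}(\varphi^i(g_m))=g_m$ together with $(g_m,m)\in\xi$ exhibits $\varphi^i(g_m)$ as a valid choice of $h^\ast$; uniqueness then forces $h\equiv\varphi^i(g_m)\pmod{\varphi^{m+i}(G)}$, whence $y\in g_m\overline{\varphi}^m(G)$ and $(y,m)\in\rho(\sigma(\xi))$. The identity $\sigma\circ\rho=\mathrm{id}$ is immediate: $(g_n,n)\in\rho((g_n)_n)$ (choose $h=e$), and uniqueness in $\widehat{\mathcal{R}}_3$ recovers the original family.

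\textbf{Continuity and main obstacle.} A subbase of $\Omega_\mathcal{R}$ consists of the cylinders $\{\xi:s\in\xi\}$ and their complements, for $s=(y,m)\in\overline{S}$. Writing $y=\overline{\varphi}^{-i}(h)$, the preimage under $\rho$ is the set of $(g_n)_n\in\widetilde{G}$ for which $g_m\in G/\varphi^m(G)$ equals a specific class determined by $h$ and $\varphi^i$ (or is empty, when $h\notin\varphi^i(G)\varphi^{m+i}(G)$); in either case this is a clopen cylinder in the inverse limit, so $\rho$ is continuous. The main subtlety is the inclusion $\xi\subseteq\rho(\sigma(\xi))$: an element $(y,m)\in\xi$ may have $y\in\mathds{G}\setminus G$, and one must invoke $\widehat{\mathcal{R}}_3$ arbitrarily far in the negative direction of $\mathds{Z}$, together with the automorphism property of $\overline{\varphi}$, in order to pin the coset $y\overline{\varphi}^m(G)$ down to $g_m\overline{\varphi}^m(G)$.
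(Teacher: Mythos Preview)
Your proof is correct and follows essentially the same strategy as the paper's: both extract the unique coset $g_n$ at each level $n\geq 0$ via $\widehat{\mathcal{R}}_3$, check compatibility via $\widehat{\mathcal{R}}_2$, obtain the inclusion $\rho(\sigma(\xi))\subseteq\xi$ via $\widehat{\mathcal{R}}_1$, prove the reverse inclusion by uniqueness in $\widehat{\mathcal{R}}_3$, and finish with the compact--Hausdorff argument. The only differences are cosmetic: the paper argues injectivity/surjectivity separately rather than building $\sigma$ explicitly, proves continuity of $\rho^{-1}$ rather than of $\rho$, and for the inclusion $\xi\subseteq\rho(\sigma(\xi))$ argues by contradiction at the given level $i$ rather than shifting to $(e,-i)$ --- your shifted version is in fact a bit more careful about elements $(y,m)\in\xi$ with $y\in\mathds{G}\setminus G$, a point the paper treats somewhat implicitly.
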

\begin{proof} If $\rho((g_m)_m)=\rho((h_m)_m)$ then $h_m=g_m\varphi^m(k_m)$ for all $m\in\mathds{N}$, with $k_m\in G$. Then $g_m=h_m$ in $\frac{G}{\varphi^m(G)}$ for all $m\in\mathds{N}$ and $(g_m)_m=(h_m)_m$ (note that for $m<0$, $g_m=h_m=e$).

Now let us prove that $\rho$ is surjective. Take $\xi\in\Omega_\mathcal{R}$ and remember that $(e,0)\in\xi$ which, using $f_1^h:=1_{(h,0)}-1\in\widehat{\mathcal{R}}_1$, implies that $(h,0)\in\xi$ $\forall\; h\in G$. Also for each $j\in\mathds{N}$, set $f_3^j:=\D\sum_{h\in\frac{G}{\varphi^j(G)}}1_{(h,j)}-1\in\widehat{\mathcal{R}}_3$.

As $f_3^j((e,0)\xi)=0$, for each $j$ there exists only one class $u_j\varphi^j(G)\in\frac{G}{\varphi^j(G)}$ such that $(u_j,j)\in\xi$. Using functions of the type $f_2^n:=1_{(0,-n)}-1\in\widehat{R}_2$, for $n\in\mathds{N}$, one sees that $(u_j\varphi^j(G))_{j\in\mathds{Z}}\in\widetilde{G}$. Now we prove that $\rho((u_j\varphi^j(G))_j)=\xi$.

By construction $(u_j,j)\in\xi$, which implies (using $f_1^h\in\widehat{\mathcal{R}}_1$ defined above) that $(u_j,j)(h,0)=(u_j\varphi^j(h),j)\in\xi$ for all $h\in G$. Doing the same for every $j$ it follows that $\rho((u_j\varphi^j(G))_j)\subseteq\xi$.

Suppose that $h=(k,i)\in\xi\backslash\rho((u_j\varphi^j(G))_j)$ and note that
$$
(k,i)\notin\rho((u_j\varphi^j(G))_j)\Leftrightarrow(k,i)\notin(u_i\varphi^i(G),i)\Leftrightarrow u^{-1}_ik\notin\varphi^i(G).
$$

Now consider the elements $g=(u_i,0)$ and $h'=(u_i,i)$ of $\rho((u_j\varphi^j(G))_j)\subseteq\xi$. Since $u^{-1}_ik\notin\varphi^i(G)$, we have that $g^{-1}h=(u^{-1}_ik,i)$ and $g^{-1}h'=(e,i)$ are different, which implies that $f_3^i(g^{-1}\xi)\neq 0$, and this contradicts the fact that $\xi\in\Omega_\mathcal{R}$.

Last, let us prove that $\rho$ preserves the topology. As the sets are compact and Hausdorff, it is enough to prove that $\rho^{-1}$ is continuous, which we will prove by showing that $\pi_m\circ\rho^{-1}$ is continuous for all $m\in\mathds{Z}$ where $\pi_m:\widetilde{G}\rightarrow\frac{G}{\varphi^m(G)}$ is the canonical projection.

As $\frac{G}{\varphi^m(G)}$ is discrete we just have to show that $\rho\circ\pi_m^{-1}(\{u_m\varphi^m(G)\})$ is open in $\Omega_R$ for all $u_m\varphi^m(G)\in\frac{G}{\varphi^m(G)}$. But note that (by the proof of surjectivity above)
$$
\rho\circ\pi_m^{-1}(\{u_m\varphi^m(G)\})=\{\xi\in\Omega_R:\;(u_m,m)\in\xi\},
$$

which is open in $\Omega_R$ (induced by the product topology in $\{0,1\}^{\overline{S}}$).

Then $\rho:\widetilde{G}\rightarrow \hbox{P}(\overline{S})$ is a homeomorphism.
\end{proof}

Using the proposition above, we identify $\Omega_\mathcal{R}$ with $\widetilde{G}$, and thus view $\varpi$ as a partial action of $G$ on $\widetilde{G}$. Remember that
$$
\Omega_s=\{\xi\in\Omega_\mathcal{R}:\; s\in\xi\}.
$$

Set
$$
\widetilde{G}_s:=\rho^{-1}(\Omega_s)
$$

and define
$$
\varpi_s:\widetilde{G}_{s^{-1}}\rightarrow\widetilde{G}_s.
$$

Using $\rho$ we can conclude that for $(g_i,n)\in\overline{S}=\mathds{G}\rtimes_{\overline{\varphi}}\mathds{Z}$ ($g_i\in G_i\hookrightarrow\mathds{G}$)
$$
\widetilde{G}_{(g_i,n)}=\{(h_m\varphi^m(G))_{m\in\mathds{Z}}\in\widetilde{G}:\;h_n\varphi^n(G)=g_i\varphi^n(G)\}
$$

(where $h_n$ is viewed inside $G=G_0\subseteq\mathds{G}$) and
$$
\varpi_{(g_i,n)}((h_m\varphi^m(G))_m)=(g_i\varphi^n(h_m)\varphi^{n+m}(G))_{n+m}=(g_i\varphi^n(h_{m-n})\varphi^m(G))_m.
$$

An easily proven and useful result follows.
\begin{lemma} \label{lema111}For $(g_i,n)\in\overline{S}$ the following holds:
\begin{enumerate}
  \item[(i)] $\widetilde{G}_{(g_i,n)}=\emptyset\Leftrightarrow g_i\notin G\varphi^n(G)$;
  \item[(ii)] $\widetilde{G}_{(g_i,n)}=\widetilde{G}\Leftrightarrow G\subseteq g_i\varphi^n(G)$.
\end{enumerate}
\end{lemma}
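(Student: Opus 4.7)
The plan is to reduce both parts of the lemma to a single elementary extension observation about $\widetilde{G}$: for every integer $n$ and every coset $C \in G/\varphi^n(G)$, there is a sequence $(h_m)_{m \in \mathds{Z}} \in \widetilde{G}$ whose $n$-th entry is $C$. I would establish this directly by setting $h_m = p_{m,n}(C)$ for $m \leq n$, and $h_m = h\varphi^m(G)$ for $m > n$ (with $h$ any representative of $C$); surjectivity of the canonical projections together with a routine compatibility check show this element lies in $\widetilde{G}$.

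With this in hand, I first address (i). Unfolding the definition, $\widetilde{G}_{(g_i,n)}$ is non-empty exactly when some $h_n \in G$ satisfies $h_n\varphi^n(G) = g_i\varphi^n(G)$; by the extension observation above, any desired $h_n \in G$ can be realised as the $n$-th coordinate of a point of $\widetilde{G}$, so non-emptiness is equivalent to $g_i\varphi^n(G) \cap G \neq \emptyset$, that is, $g_i \in G\varphi^n(G)$. The claim in (i) is then the contrapositive.

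For (ii), $\widetilde{G}_{(g_i,n)} = \widetilde{G}$ says that every $(h_m)_m \in \widetilde{G}$ satisfies $h_n \in g_i\varphi^n(G)$. By the same extension observation, the $n$-th coordinate of elements of $\widetilde{G}$ ranges over every class $h\varphi^n(G)$ with $h \in G$, so this condition is equivalent to $h \in g_i\varphi^n(G)$ for all $h \in G$, i.e.\ $G \subseteq g_i\varphi^n(G)$.

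I do not foresee a major obstacle: both statements are direct consequences of the extension observation together with the definition of $\widetilde{G}_{(g_i,n)}$. The only bookkeeping subtlety is the paper's convention that $G/\varphi^k(G) = \{e\}$ for $k < 0$; in that range both sides of each equivalence become automatic (the only possible value of $h_n$ is trivial and the group-theoretic inclusions collapse consistently), so the two cases match without further work.
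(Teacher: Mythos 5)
Your argument is correct: the surjectivity of the $n$-th coordinate map $\widetilde{G}\rightarrow G/\varphi^n(G)$ (your extension observation) immediately yields both equivalences from the explicit description of $\widetilde{G}_{(g_i,n)}$, and your handling of the convention $G/\varphi^k(G)=\{e\}$ for $k<0$ is consistent. The paper states this lemma without proof, and your write-up is precisely the direct verification it leaves to the reader.
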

\begin{flushright}

  $\square$

  \end{flushright}

For $m\in\mathds{Z}$ and a subset $C_m\subseteq\frac{G}{\varphi^m(G)}$ (containing whole cosets) define the open set (it is open because it is the inverse image of a point via a projection)
$$
V_m^{C_m}=\{(u_n\varphi^n(G))_n\in\widetilde{G}:\;u_m\varphi^m(G)\in C_m\}.
$$

Clearly when $m\leq n$ then $V_m^{C_m}=V_n^{C_n}$ where
$$
C_n=\left\{u\varphi^n(G)\in\dfrac{G}{\varphi^n(G)}:\;u\varphi^m(G)\in C_m\right\}.
$$

From the definition of the product topology, we know that finite intersections of open sets $V_m^{C_m}$ form the base for the topology in $\widetilde{G}$. Since $V_{m_1}^{C_{m_1}}\cap V_{m_2}^{C_{m_2}}=V_{m}^{C_{n_1}}\cap V_{m}^{C_{n_2}}=V_m^{C_{n_1}\cap C_{n_2}}$ for $m\geq m_1,m_2$, $\left\{V_m^{C_m}\right\}$ is already a base for the topology.

Also note that if $C_m\neq\emptyset$ then for $k>0$, $C_{m+k}$ has at least 2 elements and therefore we can assume that if $V_m^{C_m}$ is not empty then $C_m$ has at least 2 elements (replacing $V_m^{C_m}$ by $V_n^{C_n}$ for $n>m$ if necessary).
\begin{proposition}\label{prop111}When $\varphi$ is a pure injective endomorphism of a commutative group $G$, the partial action $\varpi$ from $\widetilde{G}$ defined above is topologically free.
\end{proposition}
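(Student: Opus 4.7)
The plan is to verify topological freeness directly from the definition: for each $s=(g_i,n)\in\overline{S}\setminus\{(e,0)\}$ I would show that the fixed-point set $F_s=\{\xi\in\widetilde{G}_{s^{-1}}:\varpi_s(\xi)=\xi\}$ has empty interior. I would argue by contradiction, assuming that $F_s$ contains a basic open set $V_k^{C_k}$ from the base described above, and then use the remark preceding the proposition to enlarge $k$ until $|C_k|\geq 2$ (which is possible since purity together with $G\neq\{e\}$ forces $[G:\varphi(G)]\geq 2$, whence $[\varphi^m(G):\varphi^{m+1}(G)]\geq 2$).

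First I would reduce to the case $n\geq 0$. Since $\varpi_s$ and $\varpi_{s^{-1}}$ are mutually inverse bijections between $\widetilde{G}_{s^{-1}}$ and $\widetilde{G}_s$, their fixed-point sets coincide as subsets of $\widetilde{G}_s\cap\widetilde{G}_{s^{-1}}$; hence empty interior of $F_s$ in $\widetilde{G}$ is equivalent to empty interior of $F_{s^{-1}}$ in $\widetilde{G}$, and I may replace $s$ by $s^{-1}=(\overline{\varphi}^{-n}(g_i^{-1}),-n)$ if needed. With $n\geq 0$ secured, non-emptiness of $\widetilde{G}_{s^{-1}}$ via Lemma \ref{lema111} forces $g_i\in G$, so I simply write $g:=g_i\in G$.

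The case $n=0$ is then immediate: the fixed-point equation at each index $m$ collapses, by commutativity, to $g\in\varphi^m(G)$ for every $m\geq 0$; purity yields $g=e$, contradicting $s\neq (e,0)$. For the main case $n>0$, I would evaluate the fixed-point equation at index $m=k$. Since $\varphi^n(\varphi^{k-n}(G))=\varphi^k(G)$, any lift $h\in G$ of a class $\bar h\in C_k$ must satisfy $g\varphi^n(h)\equiv h\pmod{\varphi^k(G)}$. Picking two distinct classes in $C_k$ with representatives $h,h'\in G$ and subtracting (using commutativity), I obtain $\varphi^n(d)\equiv d\pmod{\varphi^k(G)}$ with $d:=hh'^{-1}\notin\varphi^k(G)$. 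Iterating this congruence $l$ times yields $d\equiv\varphi^{nl}(d)\pmod{\varphi^k(G)}$, and for $l$ large enough that $nl\geq k$ the right-hand side lies in $\varphi^k(G)$, forcing $d\in\varphi^k(G)$, a contradiction.

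The main obstacle I anticipate is the bookkeeping: keeping straight when an element of $\mathds{G}$ may legitimately be identified with one of $G$, verifying that the reduction $n<0\rightsquigarrow n\geq 0$ genuinely preserves the empty-interior property, and tracking the projective-limit consistency by which $h_{k-n}$ is determined by $h_k$. Once these identifications are settled, the essential mathematical content is a single iteration argument that transports the nontrivial difference $d$ into $\bigcap_m\varphi^m(G)=\{e\}$ modulo a fixed $\varphi^k(G)$ via purity.
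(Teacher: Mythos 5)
Your proposal is correct and follows the same overall strategy as the paper's proof: a case split on $n=0$ versus $n\neq 0$, reduction to a basic open set $V_k^{C_k}$ containing two distinct cosets, cancellation of $g_i$ by commutativity, and iteration of $\varphi^n$ to force the difference of the two representatives into $\varphi^k(G)$. The one genuine difference is in the case $n>0$: the paper evaluates the fixed-point equation at \emph{all} indices, invokes purity to upgrade the family of congruences $\varphi^n(d)d^{-1}\in\varphi^k(G)$ (for every $k$) to the exact identity $\varphi^n(d)=d$, and only then iterates to conclude $d\in\varphi^{ln}(G)$; you instead iterate the single congruence $\varphi^n(d)\equiv d\pmod{\varphi^k(G)}$ at the fixed index $k$, using $\varphi^{nl}(G)\subseteq\varphi^k(G)$ once $nl\geq k$. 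Your variant is slightly sharper, since purity is then needed only for the case $n=0$ (and to guarantee $[G:\varphi(G)]\geq 2$ so that $C_k$ can be assumed to contain two classes), whereas the paper uses it in both cases. Your explicit reduction to $n\geq 0$ via $F_s=F_{s^{-1}}$ also tidies a point the paper leaves implicit: its choice ``$m=ln>0$ for some big $l>0$'' tacitly assumes $n>0$. The bookkeeping issues you flag (identifying $g_i$ with an element of $G$ via Lemma \thref{lema111}, and replacing $\varphi^n(h_{k-n})$ by $\varphi^n(h_k)$ modulo $\varphi^k(G)$ using the projective-limit coherence) are all resolvable exactly as you indicate.
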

\begin{proof} Let us show that
$$
F_{(g_i,n)}=\{x\in\widetilde{G}_{(g_i,n)^{-1}}:\;\varpi_{(g_i,n)}(x)=x\}
$$

has empty interior, for $(g_i,n)\neq (e,0)$.

$\bullet$ Case 1: $n=0$. If $g_i\notin G$ then Lemma \ref{lema111} (i) assures that $F_{(g_i,0)}=\emptyset$.

Therefore suppose that $g_i\in G$. If $F_{(g_i,0)}\neq\emptyset$ the equation $\varpi_{(g_i,0)}(x)=x$ implies $g_i\in\varphi^m(G)$ for all $m\in\mathds{Z}$ (using the commutativity of $G$). As $\varphi$ is pure we conclude that $g_i=e$, and then $F_{(g_i,0)}=\emptyset$ for $g_i\neq e$.

$\bullet$ Case 2: Let $(g_i,n)$ with $n\neq 0$. Using again Lemma \ref{lema111} (i) we can assume that $g_i\in G\varphi^n(G)$. Take $V$ a non-empty open set of $\widetilde{G}_{(g_i,n)^{-1}}$ and, if needed, shrink $V$ so that $V=V_m^{C_m}$ (and we can assume that $m=ln>0$ for some big $l>0$). Note that we can assume that $C_m$ has at least 2 distinct elements, say $u_1\varphi^m(G)\neq u_2\varphi^m(G)$, which implies that $u_2^{-1}u_1\notin\varphi^m(G)$.

Suppose for a contradiction that $\varpi_{(g_i,n)}(x)=x$, $\forall\; x\in V$. Then, since $(u_j\varphi^k(G))_k\in V$ for $j=1,2$, we have
\begin{equation*}
\begin{split}
\varpi_{(g_i,n)}((u_j\varphi^k(G))_k)=(u_j\varphi^k(G))_k&\Rightarrow (g_i\varphi^n(u_j)\varphi^k(G))_k=(u_j\varphi^k(G))_k\\
&\Rightarrow u_j^{-1}g_i\varphi^n(u_j)\in\varphi^k(G)\hbox{ for }j=1,2\\
&\Rightarrow\varphi^n(u_2^{-1})u_2u_1^{-1}\varphi^n(u_1)\in\varphi^k(G), \forall\; k\in\mathds{Z}
\end{split}
\end{equation*}

(again we used the commutativity of $G$ to cancel the $g_i$'s). But as $\varphi$ is pure,
\begin{equation*}
\begin{split}
\varphi^n(u_2^{-1}u_1)=u_2^{-1}u_1&\Rightarrow\varphi^{ln}(u_2^{-1}u_1)=u_2^{-1}u_1\Rightarrow u_2^{-1}u_1\in\varphi^m(G)
\end{split}
\end{equation*}

which contradicts our hypothesis. So no open set can be contained in $F_{(g_i,n)}$, which implies that it has empty interior.
\end{proof}

\begin{proposition}\label{prop112}The partial action $\varpi$ is minimal.
\end{proposition}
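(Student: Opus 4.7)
The plan is to take an arbitrary non-empty open invariant subset $V \subseteq \widetilde{G}$ and argue that it must exhaust $\widetilde{G}$. Since the sets $V_m^{C_m}$ with $C_m \subseteq G/\varphi^m(G)$ form a base for the topology of $\widetilde{G}$, I can pick a non-empty basic open set $V_m^{C_m} \subseteq V$, with $C_m \neq \emptyset$ and (after enlarging $m$ if needed) $m \geq 0$.

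The decisive ingredient will be the observation that for every $g \in G = G_0 \hookrightarrow \mathds{G}$, the element $(g,0) \in \overline{S}$ has total domain, i.e.\ $\widetilde{G}_{(g,0)^{-1}} = \widetilde{G}$. Indeed, using the explicit description of $\widetilde{G}_{(g_i,n)}$ given before Lemma \ref{lema111}, the defining condition involves the quotient $G/\varphi^{0}(G) = \{e\}$ and is therefore vacuous. Moreover, the formula for $\varpi_{(g_i,n)}$ specializes in this case to
$$
\varpi_{(g,0)}\bigl((h_m \varphi^m(G))_m\bigr) = (g h_m \varphi^m(G))_m,
$$
that is, to global componentwise left multiplication by $g$.

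From this the rest is immediate: invariance of $V$ gives $\varpi_{(g,0)}(V_m^{C_m}) = V_m^{gC_m} \subseteq V$ for every $g \in G$, where $gC_m := \{gc : c \in C_m\} \subseteq G/\varphi^m(G)$. Since left multiplication of $G$ on the coset space $G/\varphi^m(G)$ is transitive, the family $\{gC_m : g \in G\}$ exhausts $G/\varphi^m(G)$ as soon as $C_m$ is non-empty. Consequently
$$
V \;\supseteq\; \bigcup_{g \in G} V_m^{gC_m} \;=\; V_m^{G/\varphi^m(G)} \;=\; \widetilde{G},
$$
and so $V = \widetilde{G}$, which is the desired minimality.

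I do not anticipate any real obstacle in this argument. Notice that it uses neither the commutativity of $G$, nor purity of $\varphi$, nor the $\mathds{Z}$-part of $\overline{S}$; the only input is that the level-$0$ copy of $G$ inside $\overline{S}$ produces a \emph{global} partial action on $\widetilde{G}$ whose projection to each finite quotient $G/\varphi^m(G)$ is transitive. Hence minimality comes out strictly easier than the topological freeness established in Proposition \ref{prop111}, where both purity and commutativity were crucial.
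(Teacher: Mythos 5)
Your argument is correct and rests on exactly the same mechanism as the paper's proof: the elements $(g,0)$ with $g\in G$ act globally on $\widetilde{G}$ (Lemma \thref{lema111}(ii)) by componentwise left translation, and left translation is transitive on each finite quotient $G/\varphi^m(G)$. The only difference is one of phrasing — you work directly with invariant open sets, while the paper shows every point has dense orbit by translating it into a given basic open set $V_k^{C_k}$ via $(uu_k^{-1},0)$ — and your closing observation that neither purity nor commutativity is needed here is consistent with the paper's proof, which also uses neither.
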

\begin{proof} We will show that all $x\in\widetilde{G}$ has dense orbit by showing the following: if $V$ is a non-empty open set then there exists $(g,n)\in\overline{S}$ such that $x\in\widetilde{G}_{(g,n)^{-1}}$ and $\varpi_{(g,n)}(x)\in V$.

Take $x=(u_m\varphi^m(G))_{m\in\mathds{Z}}\in\widetilde{G}$ and $V=V_k^{C_k}\neq\emptyset$. Consider $u\varphi^k(G)\in C_k$ and define $(uu_k^{-1},0)$. By Lemma \ref{lema111} (ii), since $uu_k^{-1}G=G$, it follows that $\widetilde{G}_{(uu_k^{-1},0)^{-1}}=\widetilde{G}$ and therefore $x\in\widetilde{G}_{(uu_k^{-1},0)^{-1}}$.

To finish, note that
$$
\varpi_{(uu_k^{-1},0)}(x)=\varpi_{(uu_k^{-1},0)}((u_m\varphi^m(G))_m)=(uu_k^{-1}u_m\varphi^m(G))_m\in V.
$$
\end{proof}

We can now conclude (and this result agrees with the previous obtained Theorem \ref{teo1}):
\begin{theorem}If $\varphi$ is a pure injective endomorphism with finite cokernel of some commutative discrete countable group $G$ then the C$^*$-algebra $\mathds{U}[\varphi]$ is simple.
\end{theorem}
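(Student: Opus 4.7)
The plan is short: every substantive piece has already been assembled in the section, so the proof is simply a matter of invoking the Exel--Laca--Quigg simplicity criterion against the two preceding propositions.

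First I would pass from $\mathds{U}[\varphi]$ to the partial crossed product. By the isomorphism \eqref{isouapcp15},
$$
\mathds{U}[\varphi]\cong C^*_p(\overline{S},\mathcal{R})\cong C(\Omega_\mathcal{R})\rtimes_{\alpha}\overline{S},
$$
so it suffices to show that the full partial crossed product on the right is simple. Since $G$ is commutative it is amenable, and therefore so is $\overline{S}=\mathds{G}\rtimes_{\overline{\varphi}}\mathds{Z}$ (amenability is preserved under direct limits and semidirect products, as already used in the proof of Proposition \ref{propl14}). By \cite{Mc} Proposition~4.2, amenability of $\overline{S}$ implies that the full and reduced partial crossed products coincide, so it is enough to analyse the reduced one.

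Next I would apply Corollary~2.9 of \cite{ExLaQu}, which states that $C(\Omega_\mathcal{R})\rtimes_{r,\alpha}\overline{S}$ is simple if and only if the underlying partial action $\varpi$ on $\Omega_\mathcal{R}$ is both topologically free and minimal. Using the homeomorphism $\rho\colon\widetilde{G}\to\Omega_\mathcal{R}$ established in Proposition \ref{homeorho}, I transport the question to the partial action of $\overline{S}$ on $\widetilde{G}$. Then Proposition~\ref{prop111} (which uses precisely the purity of $\varphi$) gives topological freeness, and Proposition~\ref{prop112} gives minimality. Hence the hypotheses of the Exel--Laca--Quigg criterion are satisfied, so $C(\Omega_\mathcal{R})\rtimes_{\alpha}\overline{S}$ is simple, and consequently $\mathds{U}[\varphi]$ is simple.

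There is no real obstacle at this stage: all the delicate work (the identification $\Omega_\mathcal{R}\cong\widetilde{G}$, topological freeness via purity, minimality via translating by $(uu_k^{-1},0)\in\overline{S}$) has already been carried out in Propositions \ref{homeorho}, \ref{prop111} and \ref{prop112}. The only points requiring care in the write-up are ensuring that the amenability of $\overline{S}$ is explicitly invoked so that the full partial crossed product may be replaced by the reduced one, and noting that the conclusion is consistent with the stronger statement of Theorem \ref{teo1}, which already gave simplicity (and pure infiniteness) under the weaker assumption that $G$ is amenable rather than commutative.
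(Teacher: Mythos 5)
Your proposal is correct and follows exactly the route the paper intends: the theorem is stated with no written proof precisely because it is the immediate combination of the isomorphism $\mathds{U}[\varphi]\cong C(\Omega_\mathcal{R})\rtimes_{\alpha}\overline{S}$, the amenability of $\overline{S}$ (to identify full and reduced partial crossed products), and the Exel--Laca--Quigg criterion applied via Propositions \thref{homeorho}, \thref{prop111} and \thref{prop112}. Nothing is missing and nothing differs from the paper's argument.
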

\begin{flushright}

  $\square$

  \end{flushright}

\begin{corollary}In the conditions of theorem above, we have$$C_r^*[\varphi]\cong\mathds{U}[\varphi].$$
\end{corollary}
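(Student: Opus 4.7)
The plan is to invoke the simplicity of $\mathds{U}[\varphi]$ established in the preceding theorem and combine it with the canonical surjection onto $C_r^*[\varphi]$. Recall from the paragraph immediately after Definition \ref{defi1} that since the generators $\{U_g\}_{g\in G}$ and $S$ of $C_r^*[\varphi]\subseteq \mathcal{L}(l^2(G))$ satisfy relations (i)--(iii) of Definition \ref{defi1}, the universal property of $\mathds{U}[\varphi]$ yields a surjective $*$-homomorphism
\begin{equation*}
\Pi:\mathds{U}[\varphi]\twoheadrightarrow C_r^*[\varphi],\qquad u_g\mapsto U_g,\quad s\mapsto S.
\end{equation*}

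First I would observe that $\Pi$ is non-zero: indeed $\Pi(1)=\mathrm{Id}_{l^2(G)}\neq 0$, so $C_r^*[\varphi]$ is a non-trivial C$^*$-algebra. Consequently $\ker(\Pi)$ is a proper closed two-sided ideal of $\mathds{U}[\varphi]$.

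Now I would apply the theorem above: under the hypotheses (namely $G$ commutative discrete countable and $\varphi$ a pure injective endomorphism with finite cokernel), $\mathds{U}[\varphi]$ is simple, so its only closed two-sided ideals are $\{0\}$ and $\mathds{U}[\varphi]$ itself. Since $\ker(\Pi)$ is proper, we must have $\ker(\Pi)=\{0\}$, i.e.\ $\Pi$ is injective. Combined with surjectivity this yields
\begin{equation*}
\mathds{U}[\varphi]\cong C_r^*[\varphi].
\end{equation*}

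There is really no obstacle here; the work has already been done in proving the simplicity of $\mathds{U}[\varphi]$. The only subtlety worth spelling out is the observation that $\Pi(1)\neq 0$, which guarantees the kernel is proper rather than the whole algebra, so that simplicity forces injectivity rather than the trivial conclusion $C_r^*[\varphi]=0$.
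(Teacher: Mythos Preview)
Your argument is correct and is precisely the intended one: the paper leaves this corollary without proof (just a $\square$), relying implicitly on the canonical surjection $\mathds{U}[\varphi]\twoheadrightarrow C_r^*[\varphi]$ noted after Definition~\ref{defi1} together with the simplicity just established. Your explicit verification that $\Pi(1)\neq 0$ so that $\ker\Pi$ is proper is exactly the missing detail the reader is expected to supply.
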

\begin{flushright}

  $\square$

  \end{flushright}

\vspace{2cm}
\footnotesize DEPARTAMENTO DE MATEM\'{A}TICA - UFSC BLUMENAU - BRAZIL (f.vieira@ufsc.br)
\end{document}